\theoremstyle{definition}
\newtheorem{dfn}{Definition}[section]
\theoremstyle{plain}
\newtheorem{thm}{Theorem}[section]
\newtheorem{lem}{Lemma}[section]
\newtheorem{prop}{Proposition}[section]
\newtheorem{cor}{Corollary}[section]
\theoremstyle{remark}
\newtheorem{rem}{Remark}[section]
\newcommand{\Boxed}[1]{\mathbin{\ooalign{$\Box$\crcr
  \hidewidth
  \raise0.55ex\hbox{$\scriptscriptstyle{#1}$}%
  \hidewidth
}}}
\title{$\mathcal{W}$-algebras with  non-admissible levels and the Deligne exceptional series}
\author{Kazuya Kawasetsu}
\address{Department of Mathematical Sciences, University of Tokyo, Komaba, Tokyo, 153-8914, Japan.}
\email{kawasetu@ms.u-tokyo.ac.jp}
\date{}
\keywords{Vertex operator algebras, $\mathcal{W}$-algebras, Affine Kac-Moody Lie algebras, Deligne exceptional series, Modular invariance of characters}
\subjclass{17B69,17B67,17B25,17B68}
\begin{document}

\begin{abstract}
Structure of certain simple $\mathcal{W}$-algebras assocated with the Deligne exceptional Lie algebras and non-admissible levels are described as the {\it simple current extensions} of certain vertex operator algebras.
As an application, the $C_2$-cofiniteness and $\mathbb{Z}_2$-rationality of the algebras are proved.
\end{abstract}

\maketitle

\section{Introduction}

The {\it Deligne exceptional series of simple Lie algebras} is the series
\[
A_1\subset A_2 \subset G_2 \subset D_4 \subset F_4 \subset E_6 \subset E_7 \subset E_8
\]
of simple Lie algebras \cite{D}.
For irreducible components of some tensor products of the adjoint representations of the simple Lie algebras in the above exceptional series,
remarkable dimension formulas, called {\it Deligne dimension formulas}, were established \cite{CdM,D,LaM2}.
They are expressed as rational functions in the dual Coxeter number $h^\vee$.
For example,
\begin{equation*}\label{eqn:vogel}
\dim \mathfrak{g} = \frac{2(h^\vee +1)(5h^\vee -6)}{h^\vee +6},
\end{equation*}
and
\[
\dim L(2\theta) = \frac{5h^{\vee 2}(2h^\vee+3)(5h^\vee-6)}{(h^\vee+12)(h^\vee+6)}.
\]

The same exceptional series appeared in earlier studies of modular differential equations.
In 1988, Mathur, Mukhi and Sen, in their work of classification of rational conformal field theories ($C_2$-cofinite rational $\mathbb{Z}_+$-graded vertex operator algebras (VOAs) of CFT-type) with two characters \cite{MMS},
studied the modular differential equations of the form
\begin{equation}\label{eq:diff1}
\left( q\frac{d}{dq}\right)^2 f(\tau) + 2 E_2(\tau)\left(q\frac{d}{dq}\right)f(\tau) + 180\mu\cdot E_4(\tau)f(\tau)=0.
\end{equation}
Here $\mu$ is a numerical constant, $\tau$ a complex number in the complex upper half-plane $\mathbb{H}$ with $q=e^{2\pi i \tau}$, and $E_k(\tau) (k=2,4,6,\ldots)$ the Eisenstein series.
(Differential equations equivalent to (\ref{eq:diff1}) were studied by Kaneko and Zagier \cite{Kan3} in number theory. See also \cite{KNS}.)
By studying (\ref{eq:diff1}), they showed, roughly speaking, that the characters of the rational conformal field theories with two characters are that of the
level one affine VOAs $V_1(\mathfrak{g})$ associated to the Deligne exceptional simple Lie algebras $\mathfrak{g}$.
Note that the differential equations (\ref{eq:diff1}) and $V_1(\mathfrak{g})$  associated to the Deligne exceptional simple Lie algebras $\mathfrak{g}$ also appear
in the study of large symmetry of vertex operator algebras \cite{T,M}.

In this paper, we consider the Deligne exceptional series in the study of the {\it quantized Drinfel'd-Sokolov reduction},
thus the so-called {\it $\mathcal{W}$-algebras}, and the {\it simple current extensions}.
Then, we obtain new examples of $C_2$-cofinite rational $\mathcal{W}$-algebras.

The $\mathcal{W}$-algebras are generalization of the extensions of the Virasoro vertex algebras,
first introduced in \cite{Zam}.
After the considerably many studies, the construction of $\mathcal{W}$-algebras by using the {\it quantized Drinfel'd-Sokolov reduction} was introduced \cite{FF, KRW,KW1} (see also \cite{A1}).

Let $\mathfrak{g}$ be a finite dimensional simple Lie algebra, $f$ an even nilpotent element and $k$ a complex number.
Consider the level $k$ universal affine vertex operator algebra (VOA) $V^k(\mathfrak{g})$ with the Segal-Sugawara conformal vector $\omega^\mathrm{aff}$ and certain vertex operator algebras $\mathcal{F}^\mathrm{ne}$
 and $\mathcal{F}^\mathrm{ch}$ (Fermions) depending on $\mathfrak{g}$ with the conformal vectors $\omega^{\mathrm{ne}}$ and $\omega^{\mathrm{ch}}$.
Consider the tensor product vertex algebra $C=V^k(\mathfrak{g})\otimes \mathcal{F}^\mathrm{ne}\otimes \mathcal{F}^{\mathrm{ch}}$, and equip $C$ with certain grading and differential $d$ (depending on $f$), thus the complex structure $(C^\bullet,d)$ ({\it BRST complex}).
Equip $C$ with the vertex operator algebra structure with the conformal vector
\[
\omega=\omega^\mathrm{aff}+\omega^\mathrm{ne}+\omega^\mathrm{ch}+\partial x.
\]
Here, $x$ is a semisimple element of $\mathfrak{g}$ with $[x,f]=-1$ and certain conditions, and $\partial$ is the derivation of the vertex algebra $C$.
Then, the {\it universal $\mathcal{W}$-algebra} $\mathcal{W}^k (\mathfrak{g},f)$ associated with $(\mathfrak{g},f,k)$ is
defined to be the $0$-th cohomology of the BRST complex,
which is a $(1/2)\mathbb{Z}_+$-graded vertex operator algebra with the conformal vector $\omega$ (not a superVOA).
Denote the simple quotient vertex operator algebra by $\mathcal{W}_k(\mathfrak{g},f)$ and call it the {\it simple $\mathcal{W}$-algebra}.

Modular invariance of the characters of the modules of vertex operator algebras are important property.
The space spanned by the characters of the modules of a RCFT ($C_2$-cofinite rational $\mathbb{Z}_+$-graded VOA of CFT-type) is invariant under modular transformation \cite{Zhu}.
Modular invariance of the characters of twisted modules is considered in \cite{DLM}.
By generalizing the result of \cite{DLM}, modular invariance for $\mathbb{Q}_+$-graded (super)VOAs of CFT-type
is considered in \cite{E}.

The modular invariance of the characters of the modules of $\mathcal{W}$-algebras with {\it admissible} levels $k$ and certain nilpotent elements $f$ (exceptional pairs) have been studied
by using the modular invariance of the corresponding affine VOAs $V_k(\mathfrak{g})$ \cite{KW2}.
Here, the number $k$ is called {\it admissible} if
\[
k+h^\vee=\frac{p}{q},\quad p,q\in \mathbb{Z}_{>0},\quad (p,q)=1,\quad p\geq 
\begin{cases}
h^\vee & (r^\vee,q)=1,\\
h & (r^\vee,q)=r^\vee.
\end{cases}
\]
Here, $r^\vee$ denotes the lacing number, that is, $r^\vee=1$ for $\mathfrak{g}=A_l,D_l,E_l$, $r^\vee=2$ for $\mathfrak{g}=B_l,C_l,F_4$ and $r^\vee=3$ for $\mathfrak{g}=G_2$.
Later, considerably many simple $\mathcal{W}$-algebras with admissible levels and certain nilpotent elements (including exceptional pairs) were proved to be $C_2$-cofinite \cite{A3}.
Since it was conjectured and has been widely believed that a simple affine VOA of level $k$ has the modular invariance property if and only if $k$ is an admissible number \cite{KW0}, $\mathcal{W}$-algebras has been believed to be $C_2$-cofinite and rational only if the level $k$ is an admissible number (cf.\ \cite{KW2}).

In this paper, we prove the $C_2$-cofiniteness and rationality of certain simple $\mathcal{W}$-algebras with non-admissible levels by using the theory of {\it simple current extensions} of the vertex operator algebras.

Let $f$ be a minimal nilpotent element of $\mathfrak{g}$.
Then, we have the Cartan subalgebra $\mathfrak{h}$ and  the highest root $\theta$ of $\mathfrak{g}$ such that $f$ becomes a lowest root vector $f_\theta$.
Consider the $\mathrm{ad}(\theta/2)$-eigenspace decomposition ({\it minimal gradation})
\[
\mathfrak{g}=\mathfrak{g}_{-1}\oplus \mathfrak{g}_{-1/2}\oplus\mathfrak{g}_{0}\oplus\mathfrak{g}_{1/2}\oplus\mathfrak{g}_1.
\]
Let $\mathfrak{g}^\natural$ denote the centralizer of $A_1=(e,\theta,f)$.
Here, $e$ is a highest root vector.
Let $k$ be a complex number.
Then, the $\mathcal{W}$-algebra $W^k=\mathcal{W}^k(\mathfrak{g},f)$
is strongly generated by the conformal vector $\omega$ with $x=\theta/2$ and certain linearly-defined
primary vectors $J^{\{a\}}$ ($a\in \mathfrak{g}^\natural$) of conformal weight $1$
 and $G^{\{v\}}$ ($v\in \mathfrak{g}_{-1/2}$) of conformal weight $3/2$
subject to the OPEs ($\lambda$-brackets) ($a,b\in \mathfrak{g}^\natural$, $u,v\in \mathfrak{g}_{-1/2}$)
\[
[J^{\{a\}}{}_\lambda J^{\{b\}}]=J^{\{[a,b]\}}+\lambda (a,b)^\natural|0\rangle,
\]
\[
[J^{\{a\}}{}_\lambda G^{\{v\}}]=G^{\{[a,v]\}},
\]
and certain polynomial $[G^{\{u\}}{}_\lambda G^{\{v\}}]$ in $\lambda$ such that
the coefficients belong to the subVOA generated by $\omega$ and $J^{\{p\}}$, $p\in \mathfrak{g}^\natural$.
Here, the cocycle $(\cdot,\cdot)^\natural$ is certain invariant bilinear form on $\mathfrak{g}^\natural$.
(For more detail, see Proposition \ref{sec:propkw} \cite[Theorem 5.1]{KW1}).

Suppose that $\mathfrak{g}$ is not of type $A_l$.
Let $V^k$ denote the subVOA generated by $J^{\{a\}}$ ($a\in \mathfrak{g}^\natural$).
Then, $V^k$ is isomorphic to the universal affine vertex operator algebra associated with $\mathfrak{g}^\natural$ and the cocycle $(\cdot,\cdot)^\natural$.
Note that $V^k$ is not of level $k$.

Let $W=\mathcal{W}_k(\mathfrak{g},f)$ denote the simple quotient of $W^k$,
and $V\subset W$ the image of $V^k$.
The main concern of this paper is the {\it branching rule} of $V\subset W$.

So, let $\omega^\natural$ denote the Segal-Sugawara conformal vector of $V^k$.
Since $J^{\{a\}}\in W^k$ ($a\in \mathfrak{g}^\natural$) are primary vectors of conformal weight $1$, the vector $\omega^{\mathrm{Vir}}=\omega-\omega^\natural$ is a Virasoro vector.
Let $U^k$ denote the Virasoro vertex operator subalgebra of $W^k$ generated by $\omega^\mathrm{Vir}$, and $U\subset W$ the image of $U^k$ under the simple quotient $W^k\rightarrow W$.
Then, the tensor product VOA $V^k\otimes U^k$ and $V\otimes U$ are embedded in $W^k$ and $W$.

What do we have when we decompose $W$ as a $V\otimes U$-module?
We consider the most beautiful case, that is, the case when $W$ must be the simple current extension of $V\otimes U$.

Suppose that $\mathfrak{g}$ is not of type $A_l$ and 
\begin{enumerate}
\item $V$ and $U$ are simple, rational and $C_2$-cofinite vertex operator algebras;
\item $W\cong V\otimes U\oplus N\otimes M$ as $V\otimes U$-modules with non-identity simple currents $N$ of $V$ and $M$ of $U$.
\end{enumerate}

\begin{thm}\label{sec:mainthm}
The complete list of the pair $(\mathfrak{g},k)$ satisfying (1) and (2) above is given by the following pairs:

\begin{enumerate}
\item $\mathfrak{g}=C_2$ and $k=1/2$,
\item $\mathfrak{g}=G_2,D_4,F_4,E_6,E_7,E_8$ and $k=-h^\vee/6$.
\end{enumerate}
For each pair $(\mathfrak{g},k)$ in the above list, $\mathcal{W}_{k}(\mathfrak{g},f_\theta)$ is $C_2$-cofinite and $\mathbb{Z}_2$-rational with an automorphism group $\mathbb{Z}_2=\{\mathrm{id},\iota\}$ defined to be
\[
\mathrm{id}(a)=a,\quad\mathrm{id}(u)=u,\quad \iota(a)=a,\quad \iota(u)=-u, 
\]
($a\in V\otimes U$, $u\in N\otimes M$).
We have the following isomorphisms:
\[
\mathcal{W}_{1/2}(C_2,f_\theta)\cong V_1(A_1)\otimes L(-25/7,0)\oplus V_1(A_1;\alpha/2)\otimes L(-25/7,5/4),
\]
\[
\mathcal{W}_{-2/3}(G_2,f_\theta)\cong V_3(A_1)\otimes L(-3/5,0)\oplus V_3(A_1;\alpha/2)\otimes L(-3/5,3/4),
\]
\[
\mathcal{W}_{-1}(D_4,f_\theta)\cong V_1(A_1)^{\otimes 3}\otimes L(-3/5,0)\oplus V_1(A_1;\alpha/2)^{\otimes 3}\otimes L(-3/5,3/4),
\]
\[
\mathcal{W}_{-3/2}(F_4,f_\theta)\cong V_1(C_3)\otimes L(-3/5,0)\oplus V_1(C_3;\varpi_3)\otimes L(-3/5,3/4),
\]
\[
\mathcal{W}_{-2}(E_6,f_\theta)\cong V_1(A_5)\otimes L(-3/5,0)\oplus V_1(A_5;\varpi_3)\otimes L(-3/5,3/4),
\]
\[
\mathcal{W}_{-3}(E_7,f_\theta)\cong V_1(D_6)\otimes L(-3/5,0)\oplus V_1(D_6;\varpi_6)\otimes L(-3/5,3/4),
\]
and
\[
\mathcal{W}_{-5}(E_8,f_\theta)\cong V_1(E_7)\otimes L(-3/5,0)\oplus V_1(E_7;\varpi_7)\otimes L(-3/5,3/4).
\]
\end{thm}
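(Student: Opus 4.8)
The plan is to prove the theorem in three stages: translate hypotheses (1) and (2) into arithmetic constraints on $(\mathfrak g,k)$ by means of Proposition~\ref{sec:propkw}, classify the solutions, and for each solution realise $\mathcal W_k(\mathfrak g,f_\theta)$ as the asserted simple current extension.

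\textbf{Reduction.} By Proposition~\ref{sec:propkw}, $V^k$ is the universal affine vertex algebra attached to $(\mathfrak g^\natural,(\cdot,\cdot)^\natural)$; since $\mathfrak g$ is not of type $A_l$, $\mathfrak g^\natural=\bigoplus_i\mathfrak g^\natural_i$ is semisimple, and the level $k^\natural_i$ induced on $\mathfrak g^\natural_i$ is an explicit affine-linear function of $k$, while $\mathfrak g_{-1/2}$ is an irreducible $\mathfrak g^\natural$-module with known highest weight. Because a simple affine vertex operator algebra is rational and $C_2$-cofinite precisely when its level is a non-negative integer, hypothesis~(1) forces $V\cong L_{k^\natural}(\mathfrak g^\natural)$ with every $k^\natural_i$ a positive integer, and forces the central charge
\[
c_U=\left(\frac{k\dim\mathfrak g}{k+h^\vee}-6k+h^\vee-4\right)-\sum_i\frac{k^\natural_i\dim\mathfrak g^\natural_i}{k^\natural_i+h^{\vee}_{\mathfrak g^\natural_i}}
\]
of $\omega^{\mathrm{Vir}}=\omega-\omega^\natural$ to be a Virasoro minimal value $c_{p,q}=1-6(p-q)^2/(pq)$, so that $U\cong L(c_{p,q},0)$. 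For hypothesis~(2) I compare low-weight spaces. Since $\mathcal W^k(\mathfrak g,f_\theta)$ is strongly generated in weights $1,\tfrac32,2$, one has $W_{1/2}=0$, $W_1=\mathfrak g^\natural$, and $W_{3/2}=\mathrm{span}\{G^{\{v\}}\}$, whereas $V\otimes U$ vanishes in weights $\tfrac12$ and $\tfrac32$ and equals $\mathfrak g^\natural$ in weight $1$; hence $(N\otimes M)_{1/2}=(N\otimes M)_1=0$, while $(N\otimes M)_{3/2}\neq0$ (otherwise all $G^{\{v\}}$ die in $W$ and $W=V\otimes U$, contrary to (2)). As the lowest weight $h_N+h_M$ of $N\otimes M$ then lies in $\tfrac12\mathbb Z$, exceeds $1$, and is at most $\tfrac32$, it equals $\tfrac32$; consequently the top of $N$ injects into the irreducible module $\mathfrak g_{-1/2}$ and so equals it, $N$ is a simple current of $L_{k^\natural}(\mathfrak g^\natural)$, and $M$ is the non-trivial simple current of $L(c_{p,q},0)$, of weight $h_M=(p-2)(q-2)/4$.

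\textbf{Classification.} From $h_N+h_M=\tfrac32$, $h_M=(p-2)(q-2)/4$ and $h_N,h_M>0$ one gets $(p-2)(q-2)\in\{0,2,3,5\}$, so $q\le3$ and only the possibilities $(h_N,c_U)\in\{(\tfrac32,c_{p,2}),(1,\tfrac12),(\tfrac34,-\tfrac35),(\tfrac14,-\tfrac{25}{7})\}$ survive. For each simple $\mathfrak g\neq A_l$ one then uses that $\mathfrak g_{-1/2}$, of known highest weight $\lambda=(\lambda_i)_i$, must be the top space of a simple current of $L_{k^\natural}(\mathfrak g^\natural)$: by the classification of simple currents of simple affine vertex operator algebras this forces each $\lambda_i$ to be $k^\natural_i$ times a (co)minuscule fundamental weight of $\mathfrak g^\natural_i$, which determines all $k^\natural_i$ and hence $k$, whereupon $h_N$ and $c_U$ become explicit numbers that must coincide with one of the four options. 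Running through the simple Lie algebras leaves only $\mathfrak g=C_2$ with $k=\tfrac12$ (then $\mathfrak g^\natural=A_1$, $k^\natural=1$, $h_N=\tfrac14$, $c_U=-\tfrac{25}{7}$) and $\mathfrak g\in\{G_2,D_4,F_4,E_6,E_7,E_8\}$ with $k=-h^\vee/6$ (then $\mathfrak g^\natural=A_1,A_1^3,C_3,A_5,D_6,E_7$ respectively, $h_N=\tfrac34$, $c_U=-\tfrac35$); in each surviving case $\mathfrak g_{-1/2}$ is the indicated (co)minuscule $\mathfrak g^\natural$-module and $\mathfrak g$ is exactly a non-$A$ member of the Deligne exceptional series. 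Arranging this case analysis so as to keep it finite, and carrying it out, is the main work of this direction.

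\textbf{Construction and conclusions.} Fix a pair on the list and put $\widetilde W=\bigl(V_{k^\natural}(\mathfrak g^\natural)\otimes L(c_U,0)\bigr)\oplus\bigl(N\otimes M\bigr)$. Since $N$ is a simple current of the simple rational $C_2$-cofinite self-dual vertex operator algebra $V_{k^\natural}(\mathfrak g^\natural)$, $M$ a simple current of the minimal model $L(c_U,0)$, and $h_N+h_M=\tfrac32$, the theory of simple current extensions makes $\widetilde W$ a simple $\tfrac12\mathbb Z_{\ge0}$-graded vertex operator algebra carrying the order-two automorphism $\iota$ of the statement, and the regularity results for such extensions show at once that $\widetilde W$ is $C_2$-cofinite and $\mathbb Z_2$-rational. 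It remains to identify $\widetilde W$ with $\mathcal W_k(\mathfrak g,f_\theta)$. The algebra $\widetilde W$ is strongly generated by $\omega$, by currents $J^{\{a\}}$ ($a\in\mathfrak g^\natural$) forming $\mathfrak g^\natural$ at level $k^\natural$ with cocycle $(\cdot,\cdot)^\natural$, and by a basis $G^{\{v\}}$ ($v\in\mathfrak g_{-1/2}$) of the weight-$\tfrac32$ subspace $\mathfrak g_{-1/2}\subset N\otimes M$, which are primary because they span the top of a simple module; by construction it satisfies the brackets $[J^{\{a\}}{}_\lambda J^{\{b\}}]$ and $[J^{\{a\}}{}_\lambda G^{\{v\}}]$ of Proposition~\ref{sec:propkw} and has the same central charge as $\mathcal W^k(\mathfrak g,f_\theta)$. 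The one remaining point is that $[G^{\{u\}}{}_\lambda G^{\{v\}}]$ in $\widetilde W$ agrees with the bracket prescribed by Proposition~\ref{sec:propkw} at the value of $k$ recovered from $k^\natural$: in $\widetilde W$ this bracket is computed from the intertwining operators $N\times N\to V_{k^\natural}(\mathfrak g^\natural)$ and $M\times M\to L(c_U,0)$ underlying the extension, and after fixing the single free rescaling of the $G^{\{v\}}$ by the central ($\lambda^2$) term one must match the $\lambda^1$ (quadratic $\mathfrak g^\natural$-Casimir) term with the leading term of $N\times N\to V_{k^\natural}(\mathfrak g^\natural)$ and the $\lambda^0$ ($\omega^{\mathrm{Vir}}$) term with $M\times M\to L(c_U,0)$; this is the one substantive computation and is exactly where the special value of $k$ enters, so it is the main obstacle here. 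Granting it, the assignment $\omega\mapsto\omega$, $J^{\{a\}}\mapsto J^{\{a\}}$, $G^{\{v\}}\mapsto G^{\{v\}}$ extends to a surjective vertex operator algebra homomorphism $\mathcal W^k(\mathfrak g,f_\theta)\to\widetilde W$; since $\widetilde W$ is simple its kernel is a maximal proper ideal, and as $\mathcal W^k(\mathfrak g,f_\theta)$ has a unique maximal proper ideal this kernel is that ideal, so $\mathcal W_k(\mathfrak g,f_\theta)\cong\widetilde W$. This establishes (1) and (2) for the pair, produces the seven displayed isomorphisms once $\mathfrak g^\natural$, the module $\mathfrak g_{-1/2}$ and the minimal-model data are made explicit, and transports the $C_2$-cofiniteness and $\mathbb Z_2$-rationality of $\widetilde W$ to $\mathcal W_k(\mathfrak g,f_\theta)$.
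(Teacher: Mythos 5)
Your overall strategy coincides with the paper's: build the candidate extension of $V\otimes U$, match the strong generators of Proposition~\ref{sec:propkw}, use simplicity of the extension to descend from $\mathcal{W}^k$ to $\mathcal{W}_k$, and invoke the general simple-current-extension results for $C_2$-cofiniteness and $\mathbb{Z}_2$-rationality. But at the decisive point you write ``Granting it'': the verification that the bracket $[G^{\{u\}}{}_\lambda G^{\{v\}}]$ computed in the extension agrees with the one prescribed by Proposition~\ref{sec:propkw} at $k=-h^\vee/6$ is exactly the substantive content of the paper (Lemma~\ref{sec:lemeqn}, together with Lemmas~\ref{sec:lemred} and~\ref{sec:lemtaiou} and the root-system computations of Section~3), and your setup gives you no way to carry it out. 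Knowing $N$ only abstractly as ``the simple current of $L_{k^\natural}(\mathfrak{g}^\natural)$ with top $\mathfrak{g}_{-1/2}$'' does not determine the coefficients of the intertwining operator $I:N\times N\to V$ beyond its leading term, and the $\lambda^0$ coefficient of the bracket involves precisely the depth-two data ($\omega^\natural$, $\partial J$, and the normally ordered $J(-1)J$ terms), so it cannot be fixed by the single rescaling of the $G^{\{v\}}$ plus central-term matching as you suggest. The paper's device is to realize $V\oplus N$ inside $V_1(\mathfrak{g})\cong V\otimes V_1(A_1)\oplus N\otimes V_1(A_1;\theta/2)$, so that all products $e_\mu(n)e_\nu$ become computable in the lattice VOA $V_Q$, after which the required identity reduces to explicit root-system identities (Lemma~\ref{sec:lemeqn2}) checked case by case; some such concrete realization (or an equivalent uniqueness argument for the OPE data) is indispensable, and without it your argument does not establish the isomorphisms, hence neither the $C_2$-cofiniteness nor the $\mathbb{Z}_2$-rationality of $\mathcal{W}_k(\mathfrak{g},f_\theta)$ itself.

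A secondary point: the classification half of your proposal is also only a plan. Your constraints ($V$ a simple integrable affine VOA, $U$ a minimal model, $h_N+h_M=3/2$, top of $N$ equal to $\mathfrak{g}_{-1/2}$, and the simple-current condition forcing the highest weight of $\mathfrak{g}_{-1/2}$ to be $k^\natural$ times a cominuscule fundamental weight) are a sensible route, and indeed more explicit than the paper, which simply asserts this direction ``by case-by-case computation''; but you yourself defer the finite case analysis (``the main work of this direction''), so this part too is not yet a proof. Finally, note that the extension is only $\tfrac12\mathbb{Z}_+$-graded, so the existence of a genuine (non-super) vertex algebra structure on $\widetilde W$ is not automatic from generic simple-current-extension statements; the paper secures it through the abelian intertwining algebra/quasi-GVA formalism (Lemma~\ref{sec:lemgraded}, Proposition~\ref{sec:propvir}), and your appeal to ``the theory of simple current extensions'' should be made precise on this point.
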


Here, $\mathbb{Z}_2$-rationality says that for each $g\in \mathbb{Z}_2$, the $g$-twisted modules are completely reducible and there are finitely many inequivalent irreducible $g$-twisted modules,
$L(c,h)$ denotes the irreducible highest weight module of the Virasoro algebra of central charge $c$ and lowest conformal weight $h$.
Note that $L(-3/5,0)$ and $L(-25/7,0)$ are $(p,q)=(3,5)$ and $(3,7)$ Virasoro minimal model vertex operator algebras $\mathcal{M}(p,q)$.

Note that when $\mathfrak{g}=D_4,E_6,E_7,E_8$, the numbers $k=-h^\vee/6=-1,-2,-3,-5$ are not admissible numbers.
Therefore,  $W_{-h^\vee/6}(\mathfrak{g},f_\theta)$ with $\mathfrak{g}=D_4,E_6,E_7,E_8$ are new examples of $C_2$-cofinite $\mathcal{W}$-algebras.
These numbers satisfy the necessary condition \cite{GK1,GK2,KW2}
for the modular invariance property for the irreducible modules of 
affine Kac-Moody Lie algebras $\hat{\mathfrak{g}}^{(1)}$ of level $k$.

This result reminds us with the Deligne exceptional series.
In fact, consider $\mathfrak{g}=A_1,A_2$ and $k=-h^\vee/6=-1/3,-1/2$.
Then, we have
\begin{eqnarray}
\mathcal{W}_{-1/3}(A_1,f_\theta)\cong L(-3/5,0),
\end{eqnarray}
and
\begin{eqnarray}\label{eqn:a2case}
\\
\mathcal{W}_{-1/2}(A_2,f_\theta)\cong V_{\sqrt{3}A_1}\otimes L(-3/5,0)\oplus V_{\sqrt{3}A_1+\sqrt{3}\alpha/2}\otimes L(-3/5,3/4).\nonumber
\end{eqnarray}
Thus, $\mathcal{W}_{-1/3}(A_1,f_\theta)$ is $C_2$-cofinite and rational,
and $\mathcal{W}_{-1/2}(A_2,f_\theta)$ is $C_2$-cofinite and $\mathbb{Z}_2$-rational.

To prove the isomorphism in Theorem \ref{sec:mainthm} and eq.\ (\ref{eqn:a2case}), we explicitly give isomorphisms of vertex algebras from $W$ to the simple current extensions $W'$ of certain tensor product vertex operator algebras $V\otimes U$.

Let $\mathfrak{g}$ be a Deligne exceptional Lie algebra not of type $A_1$.
Let $\theta$ denote the highest root of $\mathfrak{g}$ and $A_1\subset \mathfrak{g}$ the $sl_2$-triple for $\theta$.
Consider the level one simple affine VOA $V_1(\mathfrak{g})$.
Let $V$ denote the commutant of $V_1(A_1)$ in $V_1(\mathfrak{g})$.
Let $N$ denote the simple current of $V$ defined to be
$V_1(\mathfrak{g})\cong V\otimes V_1(A_1)\oplus N\otimes V_1(A_1;\theta/2)$ as $V\otimes V_1(A_1)$-modules.
Consider the simple current extension $W'=V\otimes L(-3/5,0)\oplus N\otimes L(-3/5,3/4)$.
Then, as $\frac{1}{2}\mathbb{Z}_+$-graded vertex operator algebras, the simple $\mathcal{W}$-algebra $\mathcal{W}_{-h^\vee/6}(\mathfrak{g},f_\theta)$ is isomorphic to
$W'$.

We show the isomorphism by explicitly comparing the operator product expansions (OPEs) of certain generators of $W$ and $W'$.

In order to compute the VOA structure of $W'$,
we first consider the {\it abelian intertwining algebras} (AIAs) $V\oplus N$ and $L(-3/5,0)\oplus L(-3/5,3/4)$.
Then, we consider the tensor product of the AIAs and realize $W'$ as a subVOA (graded tensor product) of the tensor product.

In order to simplify the construction, we introduce the notions of {\it quasi generalized vertex algebra} (quasi-GVA) which is a subclass of the AIAs and include the {\it generalized vertex algebras} (GVAs).
We show that the above two AIAs are indeed quasi-GVAs. 
For the detail of the definitions, see section \ref{sec:appenda}.

The case when $(\mathfrak{g},k)=(C_2,1/2)$ is proved similarly and will be considered in the forthcoming paper.

By the general theory of the simple current extensions, we see that  $\mathcal{W}_{-h^\vee/6}(\mathfrak{g},f_\theta)$ are $C_2$-cofinite and $\mathbb{Z}_2$-rational (cf.\ \cite{Car,Lam,Yam}).
By \cite{DLM,E}, we obtain the modular invariance of the characters of (twisted) modules of the $\mathcal{W}$-algebras.
For $E_8$ case, we see that the characters of the $\iota$-twisted modules (Ramond twisted modules) coincide with modular invariant characters of the {\it intermediate vertex subalgebra} $V_{E_{7+1/2}}$ \cite{Kaw1}.
The Ramond-twisted irreducible characters of $\mathcal{W}_{-h^\vee/6}(\mathfrak{g},f_\theta)$ form a basis of the solutions of modular differential equation (\ref{eq:diff1}) with $\mu=-551/900$, which is the ``hole'' of the
$2$-character RCFTs \cite{MMS}.

The $\mathcal{W}$-algebras associated with other series of $\mathfrak{g}$ and higher levels $k$ will be considered in the forthcoming papers.
The affine vertex operator algebras $V_{-h^\vee/6}(\mathfrak{g})$ ($\mathfrak{g}$ is Deligne exceptional) will be considered in the forthcoming paper.

In section 2, we consider the structure (branching rule) of the $\mathcal{W}$-algebras and explicitly 
give the isomorphisms in Theorem \ref{sec:mainthm} and eq.\ (\ref{eqn:a2case}).
In section 3, we explicitly show the key lemma for $\mathfrak{g}=D_4$ and $\mathfrak{g}=E_8$, which are the smallest and largest examples with non-admissible levels. The remaining cases are shown similarly.
In section 4, we mention some remarks about modular invariance of the characters of $W$ and other series.
Section 5--8 are appendixes.
In section 5 (Appendix A), we review and introduce the notion of AIAs and quasi GVAs and modification of the quasi-GVAs.
Following \cite{BK}, we use the {\it locality} to define the AIAs.
In section 6 (Appendix B), we consider the extension of $L(-3/5,0)$ and
modification of $V_{A_1^\circ}$.
In section 7 (Appendix C), we recall some well-known vertex algebras and generalized vertex algebras.
In section 8 (Appendix D), we consider some general facts for the abelian intertwining algebras for the reader's convenience.

\subsection*{Acknowledgments}
The author wishes to express his thanks to his advisor, Professor A.\,Matsuo for helpful advice and kind encouragement.
He wishes to express his thanks to Professor T.\,Arakawa for suggesting him to study  $\mathcal{W}$-algebras
 to develop the author's previous work, helpful advice and kind encouragement.
He also wishes to express his thanks to H.\,Yamauchi, S.\,Carnahan, M.\,Okumura and N.\,Genra for helpful discussions.
Part of this work was done when the author visited Centro di Ricerca Matematica Ennio De Giorgi, Pisa, Italy.
He also wishes to express his thanks to the institute for the great hospitality.
This work is supported by JSPS KAKENHI Grant Number 14J09236.

\subsection*{Notations.}
We denote the non-negative integers by $\mathbb{Z}_+$.
We denote the positive integers by $\mathbb{Z}_{>0}$ and
 the negative integers by $\mathbb{Z}_{<0}$.
We denote the non-negative rational numbers by $\mathbb{Q}_+$.
All vector spaces are over the field of the complex numbers $\mathbb{C}$.
We denote $\mathbb{Z}_n=\mathbb{Z}/n\mathbb{Z}$ for $n\in \mathbb{Z}_{>0}$.
For $x=k+n\mathbb{Z}\in \mathbb{Z}_n$, we denote $k=x$.
For a finite set $A$, we denote the cardinality of $A$ by $\sharp A$.

\section{$\mathcal{W}$-algebras associated with a minimal nilpotent element}

\subsection{Preliminaries}\label{sec:wkigou}

In this section, we recall the notations of $\mathcal{W}$-algebras associated with a minimal nilpotent element \cite{KW1}.

Let $\mathfrak{g}$ be a finite dimensional simple Lie algebra.
Let $\mathfrak{h}$ be a Cartan subalgebra and $\Delta\subset \mathfrak{g}^*$ the set of root of $\mathfrak{h}$ in $\mathfrak{g}$ with a set of positive roots $\Delta_+\subset \Delta$.
Let $\theta\in \Delta$ denote the highest root of $\mathfrak{g}$.
Let $(\cdot|\cdot)$ denote the non-degenerate invariant bilinear form normalized as $(\theta|\theta)=2$.
Identify $\mathfrak{h}$ with $\mathfrak{h}^*$ by using this form, and set $x=\theta/2\in \mathfrak{h}$.
Let $f=f_\theta$ be a non-zero lowest root vector with a $sl_2$-triple $(e,x,f)$ with $[x,e]=e$, $[x,f]=-f$ and $[e,f]=x$,
so that $e$ is a highest root vector.
As the $\mathrm{ad}\,x$-eigenspace decomposition, we have the {\it minimal} gradation
\[
\mathfrak{g}=\mathfrak{g}_{-1}\oplus \mathfrak{g}_{-1/2}\oplus \mathfrak{g}_{0}\oplus\mathfrak{g}_{1/2}\oplus\mathfrak{g}_1
\]
with $\mathfrak{g}_1=\mathbb{C}e$ and $\mathfrak{g}_{-1}=\mathbb{C}f$.
Here, $\mathfrak{g}_n:=\{v\in \mathfrak{g}|[x,v]=nv\}$.
Denote by $\mathfrak{g}^f$ the centralizer of $f$ in $\mathfrak{g}$ and by $\mathfrak{g}^\natural$ the subspace $\mathfrak{g}^f\cap \mathfrak{g}_0$.
Then, $\mathfrak{g}^\natural$ coincides with the centralizer of the $sl_2$-triple $(e,x,f)$, and $\mathfrak{g}^f$ the subspace $\mathfrak{g}_{-1}\oplus \mathfrak{g}_{-1/2}\oplus \mathfrak{g}^\natural$.
Set $\mathfrak{h}^\natural:=\{h\in\mathfrak{h}|(x|h)=0\}$.
Then, $\mathfrak{h}^\natural$ is a 
Cartan subalgebra of $\mathfrak{g}^\natural$, and
 we have $\mathfrak{h}=\mathfrak{h}^\natural\oplus \mathbb{C}x$.
Define the skew-symmetric bilinear form $\langle\cdot,\cdot\rangle_{\mathrm{ne}}$ on 
$\mathfrak{g}_{1/2}$  to be $\langle a,b\rangle_{\mathrm{ne}}=(f|[a,b])$.
Since $(e|f)=1/2$ and $[a,b]\in\mathbb{C}e$ for $a,b\in\mathfrak{g}_{1/2}$, we have
\[
[a,b]=2\langle a,b\rangle_{\mathrm{ne}}\,e,\quad a,b\in\mathfrak{g}_{1/2}.
\]
Note that
\[
\mathrm{tr}_{\mathfrak{g}}(\mathrm{ad}\,a)(\mathrm{ad}\,b)=2h^\vee (a|b), \quad a,b\in\mathfrak{g}.
\]

Let $\{u_\alpha\}_{\alpha\in S^\natural}$ be a basis of $\mathfrak{g}^\natural$ with the index set $S^\natural$,
and $\{u^\alpha\}_{\alpha\in S^\natural}$ the dual basis such that $(u_\alpha|u^\beta)=\delta_{\alpha,\beta}$ for $\alpha,\beta\in S^\natural$.
Set $S_{\pm 1/2}=\{\beta\in \Phi(\mathfrak{g})|(\beta|\theta/2)=\pm 1/2\}$.
Let $\{u_\gamma\}_{\gamma\in S_{1/2}}$ be a basis of $\mathfrak{g}_{1/2}$,
and $\{u^\gamma\}_{\gamma\in S_{1/2}}$ the dual basis such that $\langle u_\gamma|u^\eta\rangle_{\mathrm{ne}}=\delta_{\gamma,\eta}$ for $\gamma,\eta\in S_{1/2}$.
For $v\in \mathfrak{g}_0$, we denote by $v^\natural$ the orthogonal projection of $\mathfrak{g}_0$ on $\mathfrak{g}^\natural$.
Let $\kappa_{\mathfrak{g}_0}$ denote the Killing form of $\mathfrak{g}_0$.
Denote by $h^\vee_{0,i}$ the dual Coxeter number of the $i$-th simple component $\mathfrak{g}_i^\natural$ of $\mathfrak{g}^\natural$ with respect to the bilinear form $(\cdot|\cdot)$ restricted to $\mathfrak{g}_i^\natural$.

Let $k$ be a complex number.
Recall that the universal $\mathcal{W}$-algebra $\mathcal{W}^k(\mathfrak{g},f_\theta)$
 is a $\frac{1}{2}\mathbb{Z}_+$-graded vertex operator algebra with the conformal vector $\omega$
of central charge
\[
c_\mathcal{W}=\frac{k\,\mathrm{dim}\,\mathfrak{g}}{k+h^\vee}-6k+h^\vee-4.
\]

Recall the $\lambda$-bracket $[a_\lambda b]=\sum_{n=0}^\infty \lambda^n a{(n)}b/n!$, $a,b\in W^k(\mathfrak{g},f)$.
Note that the $\lambda$-brackets are substitutes of the OPEs.

Let $V$ be a vertex algebra, and $B$ be a subspace of $V$.
The subspace $B$ {\it strongly generates} $V$ if the monomials
\[
v_1(m_1)v_2(m_2)\cdots v_s(m_s)|0\rangle \in V,
\]
($s\in \mathbb{Z}_+$ with $v_i\in B$, $m_i\in \mathbb{Z}_{<0}$, $i=1,\ldots,s$)
span $V$.
Let $S$ be a basis of $B$ with a total order on $S$.
The subspace $B$ {\it obeys the PBW theorem} if the monomials
\[
v_1(m_1)v_2(m_2)\cdots v_s(m_s)|0\rangle \in V,
\]
($s\in \mathbb{Z}_+$ with $v_i\in S$, $m_i\in \mathbb{Z}_{<0}$, $i=1,\ldots,s$, where
the 
sequence of pairs $(v_1,m_1),(v_2,m_2),\ldots, (v_s,m_s)$ is non-increasing in the
lexicographical order)
 form a basis of $V$.
We call the basis a {\it PBW-basis} of $V$ and denote it by $\tilde{S}$.

\begin{prop}\cite[Theorem 5.1, Theorem 4.1]{KW1}\cite[pp.454]{KWc}\label{sec:propkw}
The universal $\mathcal{W}$-algebra $\mathcal{W}^k(\mathfrak{g},f)$ is strongly generated by 
the conformal vector  $\omega$ and certain linearly-defined
primary vectors $J^{\{a\}}$, $a\in \mathfrak{g}^\natural$ of conformal weight $1$
and $G^{\{v\}}$, $v\in \mathfrak{g}_{-1/2}$ of conformal weight $3/2$, with the OPEs ($\lambda$-brackets)
\[
[{J^{\{a\}}}_\lambda J^{\{b\}}]=J^{\{[a,b]\}}+\lambda\left(\left(k+\frac{1}{2}h^\vee\right)(a|b)-\frac{1}{4}\kappa_{\mathfrak{g}_0}(a,b)\right)|0\rangle,
\]
\[
[{J^{\{a\}}}_\lambda G^{\{v\}}]=G^{\{[a,v]\}},
\]
\begin{eqnarray*}
&&[{G^{\{u\}}}_\lambda G^{\{v\}}]=-2(k+h^\vee)(e|[u,v])\omega+(e|[u,v])\sum_{\alpha\in S^\natural} J^{\{u^\alpha\}}(-1)J^{\{u_\alpha\}}\nonumber\\
&&\quad+\sum_{\gamma\in S_{1/2}}J^{\{[u,u^\gamma]^\natural\}}(-1)J^{\{[u_\gamma,v]^\natural\}}+2(k+1)(\partial +2\lambda)J^{\{[[e,u],v]^\natural\}}\nonumber\\
&&\quad+\lambda\sum_{\gamma\in S_{1/2}} J^{\{[[u,u^\gamma],[u_\gamma,v]]^\natural\}}
+\frac{\lambda^2}{3}\biggl((e|[u,v])\biggl(-(k+h^\vee)c_{\mathcal{W}}\nonumber\\
&&\quad +\left(k+\frac{1}{2}h^\vee\right)\mathrm{dim}\,\mathfrak{g}^\natural-\frac{1}{2}
\sum_i h^\vee_{0,i}\,\mathrm{dim}\,\mathfrak{g}_i^\natural\biggr) +\sum_{\gamma\in S_{1/2}}\nonumber\\
&&\quad\biggl( \left(k+\frac{1}{2}h^\vee\right)\left([u,u^\gamma]^\natural\bigl|[u_\gamma,v]^\natural\right)
-\frac{1}{4}\kappa_{\mathfrak{g}_0}\left([u,u^\gamma]^\natural,[u_\gamma,v]^\natural\right)\nonumber\\
&&\quad+\frac{1}{4}\mathrm{tr}_{\mathfrak{g}_{1/2}\oplus\mathfrak{g}_1}\,\mathrm{ad}\,\left([[u,u^\gamma]^\natural,[u_\gamma,v]^\natural]\right)\biggr)\biggr)|0\rangle.
\end{eqnarray*}
Moreover, the space of the generators $\{\omega,J^{\{a\}},G^{\{v\}}|a\in \mathfrak{g}^\natural,v\in \mathfrak{g}_{-1/2}\}$ obeys the PBW theorem.
\end{prop}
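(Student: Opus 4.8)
The plan is to realize $\mathcal{W}^k(\mathfrak{g},f)$ as the degree-zero cohomology of the BRST complex of the quantized Drinfel'd-Sokolov reduction and to extract all the stated data from an explicit cochain-level computation. First I would fix the complex $C=V^k(\mathfrak{g})\otimes\mathcal{F}^{\mathrm{ne}}\otimes\mathcal{F}^{\mathrm{ch}}$, with $\mathcal{F}^{\mathrm{ch}}$ the charged ghost ($bc$) system attached to $\mathfrak{g}_{1/2}\oplus\mathfrak{g}_1$ and $\mathcal{F}^{\mathrm{ne}}$ the neutral free-field system attached to $(\mathfrak{g}_{1/2},\langle\cdot,\cdot\rangle_{\mathrm{ne}})$, equipped with the charge grading, the BRST differential $d$, and the conformal vector $\omega=\omega^{\mathrm{aff}}+\omega^{\mathrm{ne}}+\omega^{\mathrm{ch}}+\partial x$ with $x=\theta/2$; one checks that $\omega$ is $d$-closed, of central charge $c_{\mathcal{W}}$, and descends to the conformal vector of the reduction. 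The essential input is the vanishing theorem $H^{n}(C,d)=0$ for $n\neq 0$: for the minimal gradation I would prove it by splitting $d=d_{\mathrm{st}}+d_{\chi}$ according to the two nontrivial pieces $\mathfrak{g}_{1/2},\mathfrak{g}_1$, running the spectral sequence of the associated filtration, and identifying its first page with the cohomology of a Koszul-type complex that is acyclic away from charge zero.

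Next I would exhibit the cochain representatives of the generators. For $a\in\mathfrak{g}^\natural$ set $J^{\{a\}}=a+(\text{a correction quadratic in the ghosts and neutral fields})$, the correction being the unique choice, linear in $a$, that makes $J^{\{a\}}$ a $d$-cocycle primary of conformal weight $1$; likewise $G^{\{v\}}$ for $v\in\mathfrak{g}_{-1/2}$ is a corrected lift of weight $3/2$, a $d$-cocycle and linear in $v$; and $\omega$ is as above. To get strong generation I would equip $C$ with the good (Li) filtration, compatible with $d$; on the associated graded the reduction becomes a classical supercommutative BRST reduction whose cohomology is the (differential) coordinate ring of the arc space of the Slodowy slice $f+\mathfrak{g}^{e}$, on which the symbols of $\omega$, $J^{\{a\}}$ and $G^{\{v\}}$ are exactly the free coordinate generators. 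A standard lifting argument from the associated graded then shows that $\omega$, the $J^{\{a\}}$ and the $G^{\{v\}}$ strongly generate $\mathcal{W}^k(\mathfrak{g},f)$.

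For the operator product expansions I would compute the singular parts of $J^{\{a\}}(z)J^{\{b\}}(w)$, $J^{\{a\}}(z)G^{\{v\}}(w)$ and $G^{\{u\}}(z)G^{\{v\}}(w)$ directly in $C$, using the affine OPE of $V^k(\mathfrak{g})$ together with Wick's theorem for the free-field factors, and then discarding $d$-exact contributions. The first two $\lambda$-brackets come out quickly: the ghost and neutral-field contractions shift the $\mathfrak{g}^\natural$-current algebra from level $k$ to the effective invariant form $(k+\tfrac{1}{2} h^\vee)(a|b)-\tfrac{1}{4}\kappa_{\mathfrak{g}_0}(a,b)$, giving $[{J^{\{a\}}}_\lambda J^{\{b\}}]=J^{\{[a,b]\}}+\lambda\bigl((k+\tfrac{1}{2} h^\vee)(a|b)-\tfrac{1}{4}\kappa_{\mathfrak{g}_0}(a,b)\bigr)|0\rangle$, while $[{J^{\{a\}}}_\lambda G^{\{v\}}]=G^{\{[a,v]\}}$ because $G^{\{v\}}$ transforms in the $\mathfrak{g}^\natural$-module $\mathfrak{g}_{-1/2}$. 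The main obstacle is the bracket $[{G^{\{u\}}}_\lambda G^{\{v\}}]$: one must assemble the normally ordered quadratic term $(e|[u,v])\sum_{\alpha}J^{\{u^\alpha\}}(-1)J^{\{u_\alpha\}}$, the mixed term $\sum_{\gamma}J^{\{[u,u^\gamma]^\natural\}}(-1)J^{\{[u_\gamma,v]^\natural\}}$ coming from neutral-field contractions, the derivative term $2(k+1)(\partial+2\lambda)J^{\{[[e,u],v]^\natural\}}$, and the scalar $\lambda^2$ term, whose coefficient collects $c_{\mathcal{W}}$, $\dim\mathfrak{g}^\natural$, the $\dim\mathfrak{g}_i^\natural$, the $h^\vee_{0,i}$ and the trace of $\mathrm{ad}$ on $\mathfrak{g}_{1/2}\oplus\mathfrak{g}_1$. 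The delicate point is to normalize the corrections to $J^{\{a\}}$ and $G^{\{v\}}$ so that all cross terms between the three factors of $C$ cancel and exactly the stated closed form survives; I would organize the bookkeeping by conformal weight and ghost number, and cross-check the $\lambda^2$ coefficient against the independently known value of $c_{\mathcal{W}}$.

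Finally, the PBW statement follows from the same good filtration: the associated graded of $\mathcal{W}^k(\mathfrak{g},f)$ is the polynomial (supercommutative) vertex algebra freely generated by the symbols of $\omega$, the $J^{\{a\}}$ and the $G^{\{v\}}$, there being no relations because $f+\mathfrak{g}^{e}$ is an affine space of dimension $1+\dim\mathfrak{g}^\natural+\dim\mathfrak{g}_{-1/2}$; a lexicographically ordered monomial basis of the associated graded lifts, by the usual filtered-to-graded argument, to the asserted PBW basis $\tilde S$. The whole argument is that of Kac-Roan-Wakimoto and Kac-Wakimoto, and I would adopt their normalization conventions so that the cited formulas hold verbatim.
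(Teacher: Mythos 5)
The paper gives no internal proof of this proposition: it is quoted from \cite{KW1} (Theorems 4.1 and 5.1) together with the corrigendum \cite{KWc}, and your outline (BRST complex of the quantized Drinfel'd--Sokolov reduction, vanishing of cohomology outside degree zero, explicitly corrected cocycle representatives for $\omega$, $J^{\{a\}}$, $G^{\{v\}}$, Wick-theorem computation of the three $\lambda$-brackets, and the good-filtration/Slodowy-slice argument for strong generation and the PBW basis) is precisely the argument of those cited sources (cf.\ also \cite{KRW}). So your proposal is correct and takes essentially the same route as the proof the paper relies on by citation, the only practical caution being that the coefficients in $[G^{\{u\}}{}_\lambda G^{\{v\}}]$ must be taken in the corrected normalization of \cite{KWc}.
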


Let $V^k$ denote the vertex operator subalgebra generated by $J^{\{a\}}$, $a\in \mathfrak{g}^\natural$.

Suppose that $\mathfrak{g}$ is a Lie algebra not of type $A_l$.
Then, $\mathfrak{g}^\natural$ is semi-simple, and
 $V^k$ is isomorphic to the universal affine vertex operator algebra associated with $\mathfrak{g}^\natural$ and
the invariant bilinear form $(\cdot,\cdot)^\natural:\mathfrak{g}^\natural \times \mathfrak{g}^\natural \rightarrow \mathbb{C}$ defined to be
$(a,b)^\natural=(k+h^\vee/2)(a|b)-(1/4)\kappa_{\mathfrak{g}_0}(a,b)$, $a,b\in\mathfrak{g}^\natural$.
Here, for a Lie algebra $\mathfrak{k}$, the bilinear form $\kappa_{\mathfrak{k}}(\cdot,\cdot)$ denotes the Killing form of $\mathfrak{k}$.
Let $\omega^\natural$ denote the Segal-Sugawara conformal vector of $V^k$.
Since $J^{\{a\}}$ ($a\in \mathfrak{g}^\natural$) is a primary vector of conformal weight $1$ with respect to $\omega$,
we see that the vector $\omega^\mathrm{Vir}=\omega-\omega^\natural$ is a Virasoro vector (cf.\ \cite{LL}).
Let $U^k$ denote the Virasoro vertex operator algebra generated by $\omega^\mathrm{Vir}$.
Then, $V^k\otimes U^k \subset \mathcal{W}^k(\mathfrak{g},f)$.

Let $W=\mathcal{W}_k(\mathfrak{g},f)$ denote the simple quotient of $\mathcal{W}^k(\mathfrak{g},f)$.
Let $V$ and $U$ denote the image of $V^k$ and $U^k$ in $W$.
Then, $V\otimes U\subset W$ (cf. \cite[Proposition 4.3.5]{Li}).

By case-by-case computation, under the assumption of Theorem \ref{sec:mainthm}, we see that the pair $(\mathfrak{g},k)$ must be a pair in the list of the theorem.

\subsection{Level one affine VOAs associated with the Deligne exceptional Lie algebras}
\label{sec:levelone}

Let $\mathfrak{g}$ be a Deligne exceptional Lie algebra not of type $A_1$ with a fixed Cartan subalgebra $\mathfrak{h}\subset \mathfrak{g}$.
Let $\Phi(\mathfrak{g})$, $Q(\mathfrak{g})$ and $P(\mathfrak{g})$ denote the root system, root lattice and weight lattice of $\mathfrak{g}$.
Let $(\cdot|\cdot):\mathfrak{g}\times \mathfrak{g}\rightarrow \mathbb{C}$ denote the normalized bilinear form on $\mathfrak{g}$ with $(\alpha|\alpha)=2$ for each long root $\alpha\in \Phi(\mathfrak{g})$.
Fix a base $\alpha_1,\ldots,\alpha_l\in \Phi(\mathfrak{g})$ with the highest root $\theta\in \Phi(\mathfrak{g})$.
Let $(e^\theta,\theta,e^{-\theta})=A_1\subset \mathfrak{g}$ denote the $sl_2$-triple for $\theta$.
Consider the level one affine VOA $V_1(A_1)=V_{A_1}$ and  simple current
extension $V_1(A_1)\oplus V_1(A_1;\theta/2)$
 as in Proposition \ref{sec:propsce}.
That is, we consider the structure
\[
Y(e^{\beta},z)e^{\gamma}=\varepsilon(\beta,\gamma) X(e^\beta,z)e^\gamma,\quad \beta,\gamma\in A_1^\circ
\]
with
\[
\varepsilon(n\theta/2,m\theta/2)=\begin{cases} -1& \mbox{if}\ (n,m)\equiv (1,2),(2,2),(2,3),(3,1)\ (\mathrm{mod}\ 4),\\
1& \mathrm{otherwise},\end{cases}
\]
($n,m\in\mathbb{Z}$).
For $\beta,\gamma\in A_1+\theta/2$, we denote $I(e^\beta,z)e^\gamma=Y(e^\beta,z)e^\gamma$.

Suppose $\{e_\alpha|\alpha\in \Phi(\mathfrak{g})\}\cup\{\alpha_1^\vee,\ldots,\alpha_l^\vee\}$ be a Chevalley basis of $\mathfrak{g}$ with the function $\varepsilon:\Phi(\mathfrak{g})\times \Phi(\mathfrak{g})\rightarrow \mathbb{C}^\times$ such that
$[e_\alpha,e_\beta]=\varepsilon(\alpha,\beta)e_{\alpha+\beta}$ for $\alpha,\beta\in \Phi(\mathfrak{g})$ with $\alpha+\beta \in \Phi(\mathfrak{g})$ and $\alpha+\beta\neq 0$, $[e_{\alpha_i},e_{-\alpha_i}]=\varepsilon(\alpha_i,-\alpha_i)\alpha_i^\vee$
, $[e_{-\alpha_i},e_{\alpha_i}]=\varepsilon(-\alpha_i,\alpha_i)(-\alpha_i^\vee)$
and $e_{\pm \theta}=e^{\pm \theta}$.

Consider the level one affine VOA $V_1(\mathfrak{g})$.
Then, $V_1(A_1)$ is a subVOA of $V_1(\mathfrak{g})$.
Consider the commutant $V:=\mathrm{Comm}_{V_1(\mathfrak{g})}(V_1(A_1))$ of $V_1(A_1)$ in $V_1(\mathfrak{g})$.
Explicitly,
\begin{itemize}
\item $V=V_{\sqrt{3}A_1}$ for $\mathfrak{g}=A_2$;
\item $V=V_3(A_1)$ for $\mathfrak{g}=G_2$;
\item $V=V_1(A_1)^{\otimes 3}$ for $\mathfrak{g}=D_4$;
\item $V=V_{1}(\mathfrak{g}^\natural)$ for $\mathfrak{g}=F_4,E_6,E_7,E_8$.
\end{itemize}
Note that $V\otimes V_1(A_1)$ is embedded in $V_1(\mathfrak{g})$.
Let $k^\natural$ denote the level of $V$, that is, $k^\natural:=1$ for $\mathfrak{g}=A_2,D_4,F_4,E_6,E_7,E_8$ and 
$k^\natural:=3$ for $\mathfrak{g}=G_2$.
Let $h^{\vee,\natural}$ denote the dual Coxeter number of $\mathfrak{g}^\natural$ for $\mathfrak{g}=G_2,F_4,E_6,E_7,E_8$.
For $\mathfrak{g}=A_2$, set $h^{\vee,\natural}:=0$, and for $\mathfrak{g}=D_4$, set $h^{\vee,\natural}:=2$.
Note that $h^{\vee,\natural}=h^\vee_{0}$.

Let $N$ denote the module of $V$ defined to be $V_1(\mathfrak{g})\cong V\otimes V_1(A_1)\oplus N\otimes V_1(A_1;\theta/2)$ as $V\otimes V_1(A_1)$-modules.
Then, $N$ is a simple current of $V$. Explicitly,
\begin{itemize}
\item $N=V_{\sqrt{3}A_1+\sqrt{3}\alpha/2}$ for $\mathfrak{g}=A_2$;
\item $N=V_3(A_1;\alpha/2)$ for $\mathfrak{g}=G_2$;
\item $N=V_1(A_1;\alpha/2)^{\otimes 3}$ for $\mathfrak{g}=D_4$;
\item $N=V_{1}(\mathfrak{g}^\natural;\varpi_n)$ for $\mathfrak{g}=F_4,E_6,E_7,E_8$ with $n=3$ if $\mathfrak{g}=E_6$, and $n=l-1$ otherwise.
\end{itemize}
Here, $\alpha$ denotes the positeve root of $A_1$, and for $\mathfrak{g}=F_4,E_6,E_7,E_8$, the weights $\varpi_1,\ldots,\varpi_{l-1}$ denote the fundamental weights of the simple Lie algebra $\mathfrak{g}^\natural$ labelled as those in \cite{Bou}.
Note that we have the conformal weight grading
\[
N=\bigoplus_{n=0}^\infty N_{n+4/3}
\]
with 
\begin{itemize}
\item $N_{3/4}=\mathbb{C}e^{\sqrt{3}\alpha/2}\oplus \mathbb{C}e^{-\sqrt{3}\alpha/2}$ for $\mathfrak{g}=A_2$;
\item $N_{3/4}\cong L(\alpha/2)$ as $\mathfrak{g}^\natural=A_1$-module for $\mathfrak{g}=G_2$;
\item $N_{3/4}\cong L(\alpha/2)^{\otimes 3}$ as $\mathfrak{g}^\natural=A_1^{\times 3}$-module for $\mathfrak{g}=D_4$;
\item $N_{3/4}\cong L(\varpi_n)$ as $\mathfrak{g}^\natural$-module for $\mathfrak{g}=F_4,E_6,E_7,E_8$.
\end{itemize}

Consider the simple current extension $V\oplus N$ of the VOA $V$ with the intertwining operator $I:N\times N\rightarrow V[[z]]z^{\mathbb{C}}$ \cite{DL}.

\begin{lem}\label{sec:lemv}
There exists a non-zero vector $u\in N$ such that
\[
z^{-3/2}I(u,z)u\in V[[z]]\ \mbox{and} \ z^{-3/2}I(u,z)u|_{z=0}\neq 0.
\]
\end{lem}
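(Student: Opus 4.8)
The plan is to realize $u$ inside the level one affine VOA $V_1(\mathfrak{g})$, using the decomposition $V_1(\mathfrak{g})\cong V\otimes V_1(A_1)\oplus N\otimes V_1(A_1;\theta/2)$. Because $N$ and $V_1(A_1;\theta/2)$ are simple currents, the spaces of intertwining operators of types $\binom{V}{N\ N}$ and $\binom{V_1(A_1)}{V_1(A_1;\theta/2)\ V_1(A_1;\theta/2)}$ are one dimensional, so the vertex operator $Y$ of $V_1(\mathfrak{g})$ restricted to the sector $N\otimes V_1(A_1;\theta/2)$ is a non-zero scalar multiple of $I\otimes J$, where $I$ is the intertwining operator of the statement and $J$ spans $\binom{V_1(A_1)}{V_1(A_1;\theta/2)\ V_1(A_1;\theta/2)}$. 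Setting up this factorization carefully, with $I$ identified up to scalar and with the fractional powers of $z$ tracked, is exactly the abelian intertwining algebra picture of the simple current extension developed in the appendices, and is the step I expect to cost the most work; granted it, the rest is a short computation.

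I would first choose a long root $\gamma\in\Phi(\mathfrak{g})$ with $(\gamma|\theta)=-1$; such $\gamma$ exists since $\theta$ is the highest root and the long roots of $\mathfrak{g}$ form an irreducible subsystem of rank at least two with highest root $\theta$ (for instance $\gamma=-\alpha$ for a long simple root $\alpha$ joined to $-\theta$ in the relevant extended Dynkin diagram). Then $e_\gamma\in\mathfrak{g}_{-1/2}$, which as a subspace of $V_1(\mathfrak{g})_1$ is contained in $N_{3/4}\otimes V_1(A_1;\theta/2)_{1/4}$; since $e_\gamma$ has $\mathfrak{sl}_2$-weight $-1$ and the $(-1)$-weight space of the two dimensional module $V_1(A_1;\theta/2)_{1/4}$ is a line, one gets $e_\gamma=u\otimes t$ with $0\ne u\in N_{3/4}$ and $0\ne t\in V_1(A_1;\theta/2)_{1/4}$. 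By the conformal weights, $I(u,z)u=\sum_{j\ge 0}c_j z^{j-3/2}$ with $c_j\in V_j$ and $J(t,z)t=\sum_{i\ge 0}d_i z^{i+1/2}$ with $d_i\in V_1(A_1)_{i+1}$: the a priori $z^{-1/2}$ term of $J(t,z)t$, valued in $V_1(A_1)_0=\mathbb{C}|0\rangle$, is the symplectic form of the two dimensional $\mathfrak{sl}_2$-module evaluated on $(t,t)$ and hence vanishes, and $d_0\ne 0$ because, realizing $V_1(A_1)=V_{A_1}$ and $V_1(A_1;\theta/2)=V_{A_1+\theta/2}$ and taking $t$ proportional to $e^{-\theta/2}$, the vector $d_0$ is a non-zero multiple of $f_\theta=e^{-\theta}$.

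Finally I would compute $Y(e_\gamma,z)e_\gamma$ in two ways. By the factorization it equals a non-zero multiple of $\sum_{j,i\ge 0}(c_j\otimes d_i)\,z^{j+i-1}$. On the other hand $e_\gamma$ is a long root vector at level one, and $V_1(\mathfrak{g})$ contains the level one lattice VOA on the long roots: it is all of $V_1(\mathfrak{g})=V_{Q(\mathfrak{g})}$ when $\mathfrak{g}$ is simply laced, and for $\mathfrak{g}=G_2,F_4$ it is the vertex subalgebra generated by the long root vectors, which is isomorphic to $V_L$ for the long root lattice $L$ because the singular vector $e_\theta(-1)^2|0\rangle$ of $V^1(\mathfrak{g})$ vanishes in $V_1(\mathfrak{g})$. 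In that lattice VOA, $Y(e_\gamma,z)e_\gamma$ has leading term a non-zero multiple of $z^2 e^{2\gamma}$, so $e_\gamma(-1)e_\gamma=e_\gamma(-2)e_\gamma=0$ while $e_\gamma(-3)e_\gamma\ne 0$. Comparing the coefficients of $z^{-1},z^{0},z^{1},z^{2}$ in the two expressions and using that $x\mapsto x\otimes d_0$ is injective, one gets in turn $c_0=0$, $c_1=0$, $c_2=0$ and $c_3\ne 0$. Hence $I(u,z)u=z^{3/2}(c_3+c_4 z+\cdots)$ with all $c_j\in V$, so $z^{-3/2}I(u,z)u\in V[[z]]$ and $(z^{-3/2}I(u,z)u)|_{z=0}=c_3\ne 0$, which is the assertion.
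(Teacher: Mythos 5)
Your proof is correct, and at bottom it is the computation the paper intends: the paper's test vector $u\in N_{3/4}$ of weight $\alpha_i-\theta/2$ with $(\alpha_i|\theta)\neq 0$ is exactly the $N$-component of the long root vector $e_{\alpha_i-\theta}\in\mathfrak{g}_{-1/2}$, i.e.\ your $e_\gamma$ with $\gamma$ long and $(\gamma|\theta)=-1$, and the paper's terse assertion that $u(-5/2)u$ is non-zero while the three lower coefficients vanish is precisely what your comparison of $Y(e_\gamma,z)e_\gamma=z^{2}(\pm e^{2\gamma}+\cdots)$ with $c\,(I(u,z)u)\otimes(J(t,z)t)$ delivers; your long-root-lattice treatment of $G_2,F_4$ also supplies what the paper defers to ``folding''. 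The one point to handle with care is the factorization of the restricted vertex operator as $c\,I\otimes J$: you should not ``grant'' it from the abelian intertwining algebra picture of the appendices, since in the paper the identification of $V_1(\mathfrak{g})$ with the graded tensor product is established after, and by means of, Lemma \ref{sec:lemv} (through Lemma \ref{sec:lemgraded} and Proposition \ref{sec:propvir}), so that route would be circular. Your first justification is the non-circular one and should be the actual argument: the square of the odd sector lands in $V\otimes V_1(A_1)$ (for instance because $\exp(\pi i\,\theta(0))$ is an automorphism acting by $+1$ on $V\otimes V_1(A_1)$ and by $-1$ on $N\otimes V_1(A_1;\theta/2)$), so the restriction of $Y$ is an intertwining operator of type $\binom{V\otimes V_1(A_1)}{(N\otimes V_1(A_1;\theta/2))\,(N\otimes V_1(A_1;\theta/2))}$, and that space is one-dimensional, spanned by $I\otimes J$, by the simple-current fusion rules together with the factorization of intertwining-operator spaces for tensor-product VOAs, which applies because $V$ and $V_1(A_1)$ are rational and $C_2$-cofinite; the non-vanishing of $c$ then follows from simplicity of $V_1(\mathfrak{g})$, or directly from $e_\gamma(-3)e_\gamma\neq 0$. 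With that external input cited, your argument stands; it makes explicit the factorization and the non-vanishing step that the paper's proof leaves implicit, at the cost of invoking fusion-rule machinery the paper itself does not need at this point.
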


\begin{proof}
First, suppose that $\mathfrak{g}$ is not of type $A_2$.
Let $i$ be an element of $\{1,\ldots,l\}$ with $(\alpha_i|\theta)\neq 0$.
Then, a non-zero weight vector $u\in N_{3/4}$ of weight $\alpha_i-\theta/2$ satisfies $z^{-3/2}I(u,z)u\in V[[z]]$ and $z^{-3/2}I(u,z)u|_{z=0}\neq 0$.
Explicitly, when $\mathfrak{g}$ is not of type $A_2$, the vector $z^{-3/2}I(u,z)u|_{z=0}=u(-5/2)u$ is a non-zero  weight vector with weight $2\alpha_i-\theta$ of the $\mathfrak{g}^\natural$-module $V_3$ (the conformal weight homogeneous subspace of $V$ with conformal weight $3$).
When $\mathfrak{g}$ is of type $A_2$, the vector $z^{-3/2}I(u,z)u|_{z=0}=u(-5/2)u$ belongs to $V_3$ and is a non-zero multiple of the vector $e^{2\alpha_i-\theta}\in V_3$.
\end{proof}

Consider the tensor product AIA $(V\oplus N)\otimes (V_1(A_1)\oplus V_1(A_1;\theta/2))$.
Then, by Lemma \ref{sec:lemv}, Lemma \ref{sec:lemgraded} and Proposition \ref{sec:propvir}, the subalgebra 
$V\otimes V_1(A_1)\oplus N\otimes V_1(A_1;\theta/2)$ is a vertex algebra and is the simple current extension of the VOA $V\otimes V_1(A_1)$.
As vertex algebras,
\[
V_1(\mathfrak{g})\cong V\otimes V_1(A_1)\oplus N\otimes V_1(A_1;\theta/2).
\]

We fix an isomorphism $\phi:V_1(\mathfrak{g})\rightarrow V\otimes V_1(A_1)\oplus N\otimes V_1(A_1;\theta/2)$ such that 
\[
\phi^{-1}(u\otimes v)=u(-1)v=v(-1)u,\quad u\in V, v\in V_1(A_1).
\]
In particular, $\phi(u)=u\otimes |0\rangle$ for $u\in V$ and $\phi(e_{\pm \theta})=|0\rangle\otimes e^{\pm \theta}$.
Let $(\mathfrak{h}^*)^\theta$ denote the subspace of $\mathfrak{h}^*$ orthogonal to $\theta$ with respect to $(\cdot|\cdot)|_{\mathfrak{h}^*}$
Let $P'$ denote the set
\[
P'=\{\mu\in (\mathfrak{h}^*)^\theta| \mu-\theta/2 \in S_{-1/2}\}.
\]
Let 
\[
\{e_\mu|\mu\in P'\} \subset N_{3/4}
\]
 denote the basis of $N_{3/4}$ such that $\phi(e_{\mu- \theta/2})=e_\mu\otimes e^{ -\theta/2}$ ($\mu\in P'$).
Then, $e_\mu\otimes e^{\theta/2}=-(|0\rangle\otimes e^{\theta})(0)(e_\mu\otimes e^{-\theta/2})=-\phi(e_\theta)(0)\phi(e_{\mu-\theta/2})=-\phi(e_{\theta}(0)e_{\mu-\theta/2})=-\varepsilon(\theta,\mu-\theta/2)\phi(e_{\mu+\theta/2})$.

Note that
\[
\mathfrak{g}_{-1/2}=\langle \phi^{-1}(e_\mu\otimes e^{-\theta/2})|\mu \in P'\rangle_{\mathbb{C}},
\]
and
\[
\mathfrak{g}_{1/2}=\langle \phi^{-1}(e_\mu\otimes e^{\theta/2})|\mu \in P'\rangle_{\mathbb{C}}.
\]

Note that the submodule containing $\{e_\mu|\mu\in P'\}$ of $N$ coincides with $N$.


Let $L(-3/5,0)$ denote the Virasoro minimal model of central charge $-3/5$ with the Virasoro vector $\omega^{\mathrm{Vir}}=L(-2)|0\rangle$ and $L(-3/5,3/4)$ the simple current of conformal weight $3/4$ with highest weight vector $|3/4\rangle$.
Consider the intertwining operator normalized as
\begin{eqnarray*}
I(|3/4\rangle,z)|3/4\rangle&=&-(k+h^\vee)\frac{c}{2h}|0\rangle z^{-3/2}-(k+h^\vee)\omega^{\mathrm{Vir}} z^{1/2}+\cdots\\
&=& \frac{h^\vee}{3}|0\rangle z^{-3/2}-\frac{5h^\vee}{6}\omega^{\mathrm{Vir}}z^{1/2}+\cdots.
\end{eqnarray*}
Here, $k=-h^\vee/6$, $c=-3/5$, and $h=3/4$.
Consider the tensor product AIA $(V\oplus N)\otimes (L(-3/5,0)\oplus L(-3/5,3/4))$.
Then, by Lemma \ref{sec:lemv} Lemma \ref{sec:lemgraded} and Proposition \ref{sec:propvir},
the subalgebra
\[
W'=V\otimes L(-3/5,0)\oplus N\otimes L(-3/5,3/4)
\]
is a vertex algebra and is the simple current extension 
of $V\otimes L(-3/5,0)$ with the intertwining operator $I\otimes I$.

Set $f=e_{-\theta}$, $e=-(1/2) e_{\theta}$ and $x=(1/2)\theta$.
Consider the universal $\mathcal{W}$-algebra $W^k=\mathcal{W}^k(\mathfrak{g},f)$
and simple quotient $W=\mathcal{W}_k(\mathfrak{g},f)$.

\begin{prop}\label{sec:propassignments}
The assignments
\[
W^k\ni J^{\{e_\beta\}}\mapsto e_\beta\otimes |0\rangle\in W',\quad \beta\in \Phi(\mathfrak{g}^\natural),
\]
\[
W^k\ni J^{\{\alpha_i^\vee\}}\mapsto \alpha_i^\vee\otimes |0\rangle \in W',\quad (\alpha_i|\theta)=0,
\]
\[
W^k\ni G^{\{e_{\mu}\}}\mapsto e_{\mu+\theta/2}\otimes |3/4\rangle \in W',\quad \mu\in S_{-1/2},
\]
\[
W^k\ni \omega \mapsto \omega^{\natural}\otimes |0\rangle+|0\rangle \otimes \omega^{\mathrm{Vir}}\in W'
\]
induces a surjective vertex operator algebra homomorphism $W^k\rightarrow W'$
and vertex operator algebra isomorphism $W\cong W'$.
\end{prop}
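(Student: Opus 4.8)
The plan is to build the homomorphism $\Psi\colon W^k\to W'$ from the universal property of $W^k=\mathcal{W}^k(\mathfrak{g},f)$: by Proposition~\ref{sec:propkw}, $W^k$ is strongly and freely generated by $\omega$, the $J^{\{a\}}$ and the $G^{\{v\}}$ subject only to the $\lambda$-brackets written there, so a vertex operator algebra homomorphism out of $W^k$ is determined by, and exists for, any assignment of these generators to elements of a target vertex algebra that reproduces those $\lambda$-brackets (with $(\cdot,\cdot)^\natural$ and $c_{\mathcal W}$ specialized at $k=-h^\vee/6$). Hence the first and principal task is to verify that the four families of elements of $W'$ in the statement satisfy the OPEs of Proposition~\ref{sec:propkw}.

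This verification splits according to the pair of generator types. For $[J^{\{a\}}{}_\lambda J^{\{b\}}]$ one checks that $\cspan\{e_\beta,\alpha_i^\vee\}$ is carried isomorphically onto $\mathfrak{g}^\natural$ (so that it generates the affine subalgebra $V$ of $W'$) and that the invariant form defining the level of $V$ equals $(a,b)^\natural=(k+h^\vee/2)(a|b)-\tfrac14\kappa_{\mathfrak{g}_0}(a,b)$; this is a finite numerical identity for each $\mathfrak{g}$ in the list, and it is precisely what forces $k=-h^\vee/6$. For $[J^{\{a\}}{}_\lambda G^{\{v\}}]=G^{\{[a,v]\}}$ one checks that $\mathfrak{g}_{-1/2}$ and the $\mathfrak{g}^\natural$-module $N_{3/4}$ are identified compatibly with the action of $\mathfrak{g}^\natural$ on $N$ inside the abelian intertwining algebra $V\oplus N$; Lemma~\ref{sec:lemv} pins down the normalization of the intertwining operator $I\colon N\times N\to V[[z]]z^{\mathbb{C}}$ that makes this consistent. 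The substantial case is $[G^{\{u\}}{}_\lambda G^{\{v\}}]$: on the $W'$ side it is computed from the tensor product intertwining operator $I\otimes I$ on $(V\oplus N)\otimes(L(-3/5,0)\oplus L(-3/5,3/4))$, combining the structure constants of $I$ on $N$ with the explicitly normalized Virasoro minimal-model intertwiner (whose leading terms $\tfrac{h^\vee}{3}|0\rangle z^{-3/2}$ and $-\tfrac{5h^\vee}{6}\omega^{\mathrm{Vir}}z^{1/2}$ come from $-(k+h^\vee)\tfrac{c}{2h}$ with $k=-h^\vee/6$, $c=-3/5$, $h=3/4$). The $z^{-2}$ coefficient yields the $\omega$-term and the Sugawara-type term $\sum_\alpha J^{\{u^\alpha\}}(-1)J^{\{u_\alpha\}}$ once $\omega^\natural\otimes|0\rangle+|0\rangle\otimes\omega^{\mathrm{Vir}}$ is identified with $\omega$, and the $z^{-1}$ and constant coefficients yield the remaining quadratic, derivative and central terms, the central one using the Lie-theoretic evaluations of $\dim\mathfrak{g}^\natural$, the $h^\vee_{0,i}$ and the traces occurring in Proposition~\ref{sec:propkw}. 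This OPE check, carried out explicitly for the extreme non-admissible cases $\mathfrak{g}=D_4$ and $\mathfrak{g}=E_8$ in Section~3 and similarly for the rest, is the main obstacle.

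Granting it, $\Psi$ exists. For surjectivity, $\mathrm{Im}\,\Psi$ contains $\cspan\{e_\beta,\alpha_i^\vee\}\otimes|0\rangle\cong\mathfrak{g}^\natural$, hence all of $V\otimes|0\rangle$ since the affine vertex operator algebra $V$ is generated by its weight-one subspace $\mathfrak{g}^\natural$ (for $\mathfrak{g}=A_2$ one argues analogously with the rank-one torus $\mathfrak{h}^\natural$ and $V=V_{\sqrt3A_1}$); it then contains $|0\rangle\otimes\omega^{\mathrm{Vir}}=\omega-\omega^\natural\otimes|0\rangle$, hence all of $V\otimes L(-3/5,0)$; and it contains all of $N_{3/4}\otimes|3/4\rangle$, which generates the simple $V\otimes L(-3/5,0)$-module $N\otimes L(-3/5,3/4)$ because $N$ is a simple current of $V$. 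Thus $\mathrm{Im}\,\Psi=W'$.

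Finally, $\Psi$ sends $\omega$ to the conformal vector of $W'$, so it preserves the $\tfrac12\mathbb{Z}_+$-grading and $\ker\Psi$ is a proper graded ideal of $W^k$. Since $(W^k)_0=\mathbb{C}|0\rangle$, the sum of all proper graded ideals is again proper, so $W^k$ has a unique maximal graded ideal $J_{\max}$ with $W^k/J_{\max}=\mathcal{W}_k(\mathfrak{g},f)=W$, and $\ker\Psi\subseteq J_{\max}$. But $W'=W^k/\ker\Psi$ is simple (it is the simple current extension $V\otimes L(-3/5,0)\oplus N\otimes L(-3/5,3/4)$ of a simple vertex operator algebra), so $\ker\Psi$ is a maximal ideal; hence $\ker\Psi=J_{\max}$ and $W\cong W'$ as vertex operator algebras, which is the assertion of the proposition.
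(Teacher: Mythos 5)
Your proposal follows essentially the same route as the paper's proof: extend the assignment linearly using the PBW (free strong generation) property from Proposition \ref{sec:propkw}, verify the $\lambda$-brackets of the images generator by generator (with the $[G^{\{u\}}{}_\lambda G^{\{v\}}]$ case resting on the explicit root-system computations of Section 3, i.e.\ Lemma \ref{sec:lemeqn} together with Lemmas \ref{sec:lemred}--\ref{sec:lemtaiou}), conclude surjectivity because the images generate $W'$, and deduce $W\cong W'$ from simplicity of the simple current extension $W'$. Two minor remarks: the paper streamlines the $[G,G]$ check via eq.\ (\ref{eqn:omega}) so that only the $u(0)v$ coefficient needs verification, and your bookkeeping of coefficients is shifted --- for weight-$3/2$ primaries the $\omega$, Sugawara and $JJ$-quadratic terms sit in the $z^{-1}$ coefficient, the $J$-linear terms in $z^{-2}$, and the central term in $z^{-3}$ --- which is harmless since your plan is to match all coefficients anyway.
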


Let $S_{1/2}$ denote the set of all roots of $\mathfrak{g}_{1/2}$.
Consider the basis $u_{\gamma}=e_{\gamma}$, $\gamma \in S_{1/2}$.
Let $\{u^\gamma\}$ denote the dual basis such that $(f|[u_\gamma,u^\eta])=\delta_{\gamma,\eta}$.
Then, $u^{\gamma}=-\varepsilon(\gamma,-\gamma+\theta)e_{-\gamma+\theta}$.

In order to show the proposition, we need the following three lemmas.

Let $\mu,\nu$ be elements of $P'$.
Put $u=e_{\mu-\theta/2}$ and $v=e_{\nu-\theta/2}$.

\begin{lem}\label{sec:lemeqn}
As elements of $V_1(\mathfrak{g})$,
\begin{eqnarray}\label{eqn:lambda0}
&&\frac{2h^\vee}{3}\biggl([e,u](-1)v-\frac{1}{2}\theta(-1)[[e,u],v]\\
&&\quad\quad +\frac{1}{8}(e|[u,v])\left(\theta(-1)^2-2\theta(-2)\right)|0\rangle \biggr)\nonumber\\
&&\quad=-2\left(\frac{5h^\vee}{6}-\frac{k^\natural+h^{\vee,\natural}}{k^\natural}\right)(e|[u,v])\omega^{\natural}\nonumber\\
&&\quad\quad -\sum_{\gamma \in S_{1/2}}
\varepsilon(\gamma,-\gamma+\theta)[u,e_{-\gamma+\theta}]^\natural(-1)[e_{\gamma},v]^\natural\nonumber\\
&&\quad\quad+2\left(-\frac{h^\vee}{6}+1\right)\partial ([[e,u],v]^\natural)\nonumber.
\end{eqnarray}
\end{lem}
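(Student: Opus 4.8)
The plan is to prove \eqref{eqn:lambda0} by a direct computation inside $V_1(\mathfrak{g})$, in which both sides are homogeneous of conformal weight $2$ and lie in the zero eigenspace of $x(0)$. Throughout I would work in the realization $V_1(\mathfrak{g})\cong V\otimes V_1(A_1)\oplus N\otimes V_1(A_1;\theta/2)$ fixed by the isomorphism $\phi$ of Section~\ref{sec:levelone}, so that the intertwining operator $I\colon N\times N\to V[[z]]z^{\mathbb{C}}$ of the simple current extension $V\oplus N$ and the rank-one lattice product $Y(e^{\theta/2},z)e^{-\theta/2}$ are both explicit.

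First I would simplify the left-hand side. Since $[e,u]=-\tfrac12\varepsilon(\theta,\mu-\theta/2)\,e_{\mu+\theta/2}\in\mathfrak{g}_{1/2}$ while $\phi(e_{\mu+\theta/2})=-\varepsilon(\theta,\mu-\theta/2)(e_\mu\otimes e^{\theta/2})$ and $\phi(e_{\nu-\theta/2})=e_\nu\otimes e^{-\theta/2}$, the product $[e,u](-1)v$ equals $\tfrac12\,\phi^{-1}\bigl((e_\mu\otimes e^{\theta/2})(-1)(e_\nu\otimes e^{-\theta/2})\bigr)$. Expanding the right side in the tensor-product AIA as the $z^{0}$-coefficient of $I(e_\mu,z)e_\nu\otimes Y(e^{\theta/2},z)e^{-\theta/2}$, and inserting $Y(e^{\theta/2},z)e^{-\theta/2}=\varepsilon(\theta/2,-\theta/2)\,z^{-1/2}\bigl(|0\rangle+\tfrac12\theta(-1)|0\rangle\,z+(\tfrac18\theta(-1)^2+\tfrac14\theta(-2))|0\rangle\,z^{2}+\cdots\bigr)$, one writes $[e,u](-1)v$ as a combination of the weight-$2$ coefficient $e_\mu(-3/2)e_\nu\in V_2$ of $I(e_\mu,z)e_\nu$, of $\theta(-1)$ applied to the weight-$1$ coefficient $e_\mu(-1/2)e_\nu\in\mathfrak{g}^\natural$, and of $\theta(-1)^2|0\rangle,\theta(-2)|0\rangle$ with coefficient the leading term $e_\mu(1/2)e_\nu\in\mathbb{C}|0\rangle$. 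A short computation with the $sl_2$-triple, the invariance of $(\cdot|\cdot)$, and the minimal grading --- comparing the $\mathfrak{g}^\natural$- and $x$-components of $[[e,u],v]$ computed in $\mathfrak{g}$ and in the realization --- identifies $e_\mu(-1/2)e_\nu$ with a fixed multiple of $[[e,u],v]^\natural$ and $e_\mu(1/2)e_\nu$ with a fixed multiple of $(e|[u,v])|0\rangle$. Substituting these, the correction terms $-\tfrac12\theta(-1)[[e,u],v]$ and $\tfrac18(e|[u,v])(\theta(-1)^2-2\theta(-2))|0\rangle$ in \eqref{eqn:lambda0} are exactly what is needed to cancel the $\theta(-1)(\cdot)$ and scalar pieces, so the left-hand side collapses to $\tfrac{h^\vee}{3}\,e_\mu(-3/2)e_\nu$, an element of $V$. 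Hence \eqref{eqn:lambda0} is equivalent to a formula for the weight-$2$ coefficient of the intertwiner $I\colon N\times N\to V$.

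It remains to compute $e_\mu(-3/2)e_\nu\in V_2$. Since $[e,u](-1)v$ is an honest monomial of $V_1(\mathfrak{g})$, the previous step already expresses $e_\mu(-3/2)e_\nu$ through $e_{\mu+\theta/2}(-1)e_{\nu-\theta/2}$ and the explicit $\theta$-corrections, so it suffices to evaluate $e_{\mu+\theta/2}(-1)e_{\nu-\theta/2}$ in the simple level-one affine VOA $V_1(\mathfrak{g})$ and project onto the summand $V$. For this I would use the structure of $V_1(\mathfrak{g})$ at conformal weight $2$: for simply-laced $\mathfrak{g}$ it is $V^1(\mathfrak{g})_2$ modulo the $\mathfrak{g}$-submodule generated by the singular vector $e_\theta(-1)^2|0\rangle$ (equivalently the lattice presentation $V_1(\mathfrak{g})=V_{Q(\mathfrak{g})}$), and for $\mathfrak{g}=G_2,F_4$ the corresponding known weight-$2$ relations of $V_1(\mathfrak{g})$. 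Because $V_2$ is spanned by $\omega^\natural$, by the quadratics $\mathfrak{g}^\natural(-1)\mathfrak{g}^\natural|0\rangle$ and by $\partial\mathfrak{g}^\natural$, and because both sides of \eqref{eqn:lambda0} are $\mathfrak{g}^\natural$-equivariant bilinear functions of $(u,v)\in\mathfrak{g}_{-1/2}\otimes\mathfrak{g}_{-1/2}\cong N_{3/4}\otimes N_{3/4}$, Schur's lemma reduces the remaining verification to finitely many scalar identities, which come out as the coefficients $-2\bigl(\tfrac{5h^\vee}{6}-\tfrac{k^\natural+h^{\vee,\natural}}{k^\natural}\bigr)$, $-\varepsilon(\gamma,-\gamma+\theta)$ and $2(1-\tfrac{h^\vee}{6})$ once one inserts the Sugawara normalization of $\omega^\natural$ relative to the embedding index $k^\natural$ and the Deligne-series relations among $h^\vee$, $\dim\mathfrak{g}$ and $\dim\mathfrak{g}^\natural$.

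The main obstacle is precisely this last step --- evaluating $e_{\mu+\theta/2}(-1)e_{\nu-\theta/2}$ modulo the level-one relations of $V_1(\mathfrak{g})$ and reading off its $V$-component in the normalized form above --- together with the bookkeeping of the cocycles involved (the Chevalley $\varepsilon$, the $A_1^{\circ}$-extension cocycle of Proposition~\ref{sec:propsce}, and the braiding sign of the tensor-product AIA). Since the weight-$2$ structure of $V_1(\mathfrak{g})$ is somewhat case-dependent, I would carry this out explicitly for the smallest and largest non-admissible cases, $\mathfrak{g}=D_4$ and $\mathfrak{g}=E_8$, and indicate that $G_2, F_4, E_6, E_7$ (and $A_2$, for eq.~\eqref{eqn:a2case}) follow by the same method; the two companion lemmas, governing the $\lambda^{1}$- and $\lambda^{2}$-coefficients of the $G$-$G$ operator product expansion, would be treated the same way, using instead the weight-$1$ and weight-$0$ coefficients $e_\mu(-1/2)e_\nu$ and $e_\mu(1/2)e_\nu$ of $I$.
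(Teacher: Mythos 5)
Your overall route coincides with the paper's: for simply-laced $\mathfrak{g}$ one realizes $V_1(\mathfrak{g})$ as the lattice vertex operator algebra $V_{Q(\mathfrak{g})}$, evaluates both sides of \eqref{eqn:lambda0} on root vectors $u=e^\alpha$, $v=e^\beta$ with $\alpha,\beta\in S_{-1/2}$, and verifies the resulting identities explicitly for $\mathfrak{g}=D_4$ and $E_8$, the other cases (including the folded, non-simply-laced ones) being analogous. However, two things keep your proposal from being a proof. First, the entire first half is a circular detour: \eqref{eqn:lambda0} is already an identity between elements of $V_1(\mathfrak{g})$, and passing through $\phi$, the intertwining operator $I$ and $Y(e^{\theta/2},z)e^{-\theta/2}$ only rewrites $[e,u](-1)v$ together with its $\theta$-corrections as $\tfrac12\,e_\mu(-3/2)e_\nu$ and then back as $e_{\mu+\theta/2}(-1)e_{\nu-\theta/2}$ plus the same corrections, i.e.\ you return to the starting point; these conversions are precisely the paper's Lemmas \ref{sec:lemred} and \ref{sec:lemtaiou}, which are used for Proposition \ref{sec:propassignments}, not for proving Lemma \ref{sec:lemeqn}.

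Second, and decisively, the step you yourself call ``the main obstacle'' is the whole content of the lemma, and it is not carried out. In the paper, evaluating both sides on $u=e^\alpha$, $v=e^\beta$ in $V_Q$ and splitting into the cases $(\alpha|\beta)=2,1,0,-1$ reduces \eqref{eqn:lambda0} to the purely root-combinatorial statements of Lemma \ref{sec:lemeqn2}: the count $\sharp\{\gamma\in S_{1/2}\,|\,(\alpha|\gamma)=0,\ (\gamma|\beta)=-1\}=h^\vee/3$ when $(\alpha|\beta)=1$ (eq.\ \eqref{eqn:1lambda0}), the weighted sum $\sum_\gamma\gamma=(h^\vee/6-1)(2\theta+\alpha-\beta)$ when $(\alpha|\beta)=0$ (eq.\ \eqref{eqn:0lambda0}), and the explicit $\omega^\natural$-identity \eqref{eqn:-1lambda0} when $\beta=-\alpha-\theta$; these are then checked by explicit enumeration for $D_4$ and $E_8$, the stabilizer of $\theta$ in the Weyl group cutting the check down to a few representative pairs. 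Your proposal never formulates these identities, and the assertion that the scalars ``come out'' after ``inserting the Sugawara normalization and the Deligne-series relations'' is not a substitute: the constants $h^\vee/3$, $h^\vee/6-1$, etc.\ arise from explicit root counts (e.g.\ $2$ for $D_4$, $10$ for $E_8$), not from general dimension formulas. Your appeal to $\mathfrak{g}^\natural$-equivariance and Schur's lemma is a legitimate device for reducing to finitely many scalar checks (it is the representation-theoretic counterpart of the paper's use of the Weyl stabilizer of $\theta$), but without the case analysis on $(\alpha|\beta)$ and the explicit evaluations the argument has a genuine gap exactly where the paper's proof does its work.
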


We show Lemma \ref{sec:lemeqn} in the next section.

\begin{lem}\label{sec:lemred}
As elements of $V_1(\mathfrak{g})$, we have
\[
e_\mu(1/2)e_\nu=-\varepsilon(\theta,\mu-\theta/2)e_{\mu+\theta/2}(1)e_{\nu-\theta/2},
\]
\[
e_\mu(-1/2)e_\nu=-\varepsilon(\theta,\mu-\theta/2)\biggl(e_{\mu+\theta/2}(0)e_{\nu-\theta/2}-\frac{1}{2}\theta(-1)e_{\mu+\theta/2}(1)e_{\nu-\theta/2}\biggr),
\]
\begin{eqnarray*}
&&e_\mu(-3/2)e_\nu=-\varepsilon(\theta,\mu-\theta/2)\biggl(e_{\mu+\theta/2}(-1)e_{\nu-\theta/2}\\
&&\quad -\frac{1}{2}\theta(-1)e_{\mu+\theta/2}(0)e_{\nu-\theta/2}+\frac{1}{8}(\theta(-1)^2-2\theta(-2))e_{\mu+\theta/2}(1)e_{\nu-\theta/2}\biggr).
\end{eqnarray*}

\end{lem}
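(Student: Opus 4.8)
The plan is to compute both products inside the realization $V_1(\mathfrak{g})\cong V\otimes V_1(A_1)\oplus N\otimes V_1(A_1;\theta/2)$ fixed by $\phi$, where the block-diagonal structure of the tensor-product AIA makes everything explicit, and then to invert a small triangular system. Recall $\phi(e_{\nu-\theta/2})=e_\nu\otimes e^{-\theta/2}$ and $\phi(e_{\mu+\theta/2})=-\varepsilon(\theta,\mu-\theta/2)\,e_\mu\otimes e^{\theta/2}$, with $\varepsilon(\theta,\mu-\theta/2)\in\{\pm1\}$ (since $\theta$ is the highest root, $\mathrm{ad}(e_\theta)$ sends $e_{\mu-\theta/2}$ to $e_{\mu+\theta/2}$ with coefficient $\pm1$). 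Since $\phi$ is a homomorphism of vertex algebras,
\[
\phi\bigl(Y(e_{\mu+\theta/2},z)e_{\nu-\theta/2}\bigr)=-\varepsilon(\theta,\mu-\theta/2)\;Y\bigl(e_\mu\otimes e^{\theta/2},z\bigr)\bigl(e_\nu\otimes e^{-\theta/2}\bigr),
\]
and by Lemma \ref{sec:lemgraded} and Proposition \ref{sec:propvir} the right-hand vertex operator of the extension acts on the ``$N\otimes V_1(A_1;\theta/2)$''-block as the tensor product $I(e_\mu,z)e_\nu\otimes I(e^{\theta/2},z)e^{-\theta/2}$ of the two intertwining operators, without any further phase or power of $z$. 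Here $I(e_\mu,z)e_\nu$ lies in $z^{-3/2}V[[z]]$ (its coefficients have nonnegative conformal weight) and $I(e^{\theta/2},z)e^{-\theta/2}$ lies in $z^{-1/2}V_1(A_1)[[z]]$, so the product lies in $z^{-2}V_1(\mathfrak{g})[[z]]$, consistently with its being a product of the weight-one vectors $e_{\mu+\theta/2},e_{\nu-\theta/2}$ in $V_1(\mathfrak{g})$.

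The second factor is made explicit from the structure fixed in \S\ref{sec:levelone}: $I(e^{\theta/2},z)e^{-\theta/2}=\varepsilon(\theta/2,-\theta/2)\,X(e^{\theta/2},z)e^{-\theta/2}$ with $\varepsilon(\theta/2,-\theta/2)=1$ from the displayed cocycle, and expanding the lattice operator,
\[
X(e^{\theta/2},z)e^{-\theta/2}=z^{-1/2}\exp\!\Bigl(\sum_{n\ge 1}\tfrac{\theta(-n)}{2n}z^n\Bigr)|0\rangle
= z^{-1/2}\Bigl(|0\rangle+\tfrac12\theta(-1)|0\rangle\,z+\bigl(\tfrac18\theta(-1)^2+\tfrac14\theta(-2)\bigr)|0\rangle\,z^2+\cdots\Bigr).
\]
The $\theta(-n)$ here are Heisenberg modes inside $V_1(A_1)$; to translate back into $V_1(\mathfrak{g})$ one uses that $\phi$ carries the Cartan current $\theta\in\mathfrak{h}\subset\mathfrak{g}\subset V_1(\mathfrak{g})$ to $|0\rangle\otimes h$, with $h$ the generator of the rank-one Heisenberg subalgebra of $V_1(A_1)$ (the sign of this identification being pinned down by $\phi(e^{\pm\theta})=|0\rangle\otimes e^{\pm\theta}$, the Chevalley normalization of $[e^\theta,e^{-\theta}]$, and $\varepsilon(\theta,-\theta)=-1$), and that, since $V=\mathrm{Comm}_{V_1(\mathfrak{g})}(V_1(A_1))$, the modes $\theta(n)$ with $n\ge 0$ annihilate $V$; hence $\phi^{-1}\bigl(a\otimes\theta(-1)^j\theta(-2)^k|0\rangle\bigr)=\theta(-1)^j\theta(-2)^k a$ for every $a\in V$ (using $\phi^{-1}(u\otimes v)=v(-1)u$).

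Putting these together, I would equate coefficients of $z^{-m-1}$ for $m=1,0,-1$ on the two sides of the transported identity. Because the conformal-weight grading forces $I(e_\mu,z)e_\nu$ to begin at $z^{-3/2}$, only the three modes $e_\mu(\tfrac12)e_\nu$, $e_\mu(-\tfrac12)e_\nu$, $e_\mu(-\tfrac32)e_\nu$ occur, and one obtains a unitriangular system expressing $e_{\mu+\theta/2}(1)e_{\nu-\theta/2}$, $e_{\mu+\theta/2}(0)e_{\nu-\theta/2}$, $e_{\mu+\theta/2}(-1)e_{\nu-\theta/2}$ in terms of them, the coefficients being $1$, $\tfrac12\theta(-1)$ and $\tfrac18\theta(-1)^2+\tfrac14\theta(-2)$ acting on the $V$-factor. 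Back-substituting through this $3\times 3$ system yields precisely the three displayed formulas; in particular the combination $\tfrac18(\theta(-1)^2-2\theta(-2))$ and the sign $-\tfrac12$ on the $\theta(-1)$-term come out of the substitution, not from $\phi$ itself. The only genuine obstacle is the bookkeeping of signs --- those of the cocycle $\varepsilon$ on $\tfrac12\mathbb{Z}\theta$, of the Chevalley structure constants, and of the normalization of $\phi$ --- together with confirming, via the appendix, that the block-diagonal tensor formula carries no phase or $z$-shift; the remaining algebra is a short finite computation.
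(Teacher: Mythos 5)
Your proposal is correct and follows essentially the same route as the paper's proof: transport the product through $\phi$, factor the extension vertex operator as $(I(e_\mu,z)e_\nu)\otimes(I(e^{\theta/2},z)e^{-\theta/2})$, expand the lattice intertwining operator explicitly (your coefficient $\tfrac18\theta(-1)^2+\tfrac14\theta(-2)$ agrees with the paper's $\tfrac18(\theta(-1)^2+2\theta(-2))$), and invert the resulting triangular system of coefficient identities. The extra care you take with the cocycle value $\varepsilon(\theta/2,-\theta/2)=1$ and with $\phi^{-1}(a\otimes\theta(-1)^j\theta(-2)^k|0\rangle)=\theta(-1)^j\theta(-2)^k a$ is exactly what the paper uses implicitly.
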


\begin{proof}
Since the conformal weights of $e_\mu,e_\nu$ are $3/4$, we have
\[
I(e_\mu,z)e_\nu=\sum_{n=0}^\infty e_\mu(1/2-n) e_\nu z^{n-3/2}.
\]
By explicit computation, we have
\[
I(e^{\theta/2},z)e^{-\theta/2}=|0\rangle z^{-1/2}+\frac{1}{2}\theta z^{1/2}+\frac{1}{8}(\theta(-1)^2+2\theta(-2))|0\rangle z^{3/2}+\cdots.
\]
Also, since the conformal weights of the non-zero elements of $\mathfrak{g}\subset V_1(\mathfrak{g})$ are $1$, we have
\begin{eqnarray*}
Y(e_{\mu+\theta/2},z)e_{\nu-\theta/2}=\sum_{n=-1}^\infty e_{\mu+\theta/2}(-n)e_{\nu-\theta/2} z^{n-1}.
\end{eqnarray*}
Since $\phi(Y(e_{\mu+\theta/2},z)e_{\nu-\theta/2})=-\varepsilon(\theta,\mu-\theta/2)^{-1}Y(e_\mu\otimes e^{\theta/2},z)(e_\nu\otimes e^{-\theta/2})$
and $Y(e_\mu\otimes e^{\theta/2},z)(e_\nu\otimes e^{-\theta/2})=(I(e_\mu,z)e_\nu)\otimes(I(e^{\theta/2},z)e^{-\theta/2})$,
we have
\[
e_{\mu+\theta/2}(1)e_{\nu-\theta/2}=-\varepsilon(\theta,\mu-\theta/2)\phi^{-1}((e_\mu(1/2)e_\nu)\otimes |0\rangle),
\]
\begin{eqnarray*}
&&e_{\mu+\theta/2}(0)e_{\nu-\theta/2}=-\varepsilon(\theta,\mu-\theta/2)\\
&&\cdot \left(\frac{1}{2}\phi^{-1}((e_\mu(1/2)e_\nu)\otimes \theta)+\phi^{-1}((e_\mu(-1/2)e_\nu)\otimes |0\rangle)\right),
\end{eqnarray*}
and
\begin{eqnarray*}
&&e_{\mu+\theta/2}(-1)e_{\nu-\theta/2}=-\varepsilon(\theta,\mu-\theta/2)\\
&&\cdot\biggl( \frac{1}{8}\phi^{-1}((e_\mu(1/2)e_\nu)\otimes (\theta(-1)^2+\theta(-2))|0\rangle)\\
&&+\frac{1}{2}\phi^{-1}((e_\mu(-1/2)e_\nu)\otimes \theta)+\phi^{-1}((e_\mu(-3/2)e_\nu)\otimes |0\rangle)\biggr).
\end{eqnarray*}
Thus, we have the lemma.
\end{proof}

\begin{lem}\label{sec:lemtaiou}
\begin{eqnarray*}
e_{\mu+\theta/2}(1)e_{\nu-\theta/2}=-2\varepsilon(\theta,\mu-\theta/2)^{-1}(e|[u,v])|0\rangle,
\end{eqnarray*}
\begin{eqnarray*}
e_{\mu+\theta/2}(0)e_{\nu-\theta/2}=-2\varepsilon(\theta,\mu-\theta/2)^{-1}[[e,u],v],
\end{eqnarray*}
 and 
\[
e_{\mu+\theta/2}(-1)e_{\nu-\theta/2}=-2\varepsilon(\theta,\mu-\theta/2)^{-1}[e,u](-1)v.
\]
\end{lem}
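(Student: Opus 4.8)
The three identities are all computations inside the level-one affine VOA $V_1(\mathfrak{g})$ among the weight-one vectors, so the plan is to reduce them to a single identity in the Lie algebra $\mathfrak{g}$ itself. First I would recall the structure of $V_1(\mathfrak{g})$ on its conformal weight-one subspace, which is $\mathfrak{g}$: for $a,b\in\mathfrak{g}$ one has $a(n)b=0$ for $n\geq 2$, $a(1)b=(a|b)|0\rangle$ (level one), $a(0)b=[a,b]$, while $a(-1)b$ is the associated weight-two state. Thus the left-hand sides of the three claims are, respectively, $(e_{\mu+\theta/2}|e_{\nu-\theta/2})|0\rangle$, $[e_{\mu+\theta/2},e_{\nu-\theta/2}]$ and $e_{\mu+\theta/2}(-1)e_{\nu-\theta/2}$, and everything will follow by expressing $e_{\mu+\theta/2}$ through $[e,u]$.

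The key step is the linear identity
\[
e_{\mu+\theta/2}=-2\,\varepsilon(\theta,\mu-\theta/2)^{-1}[e,u]
\]
in $\mathfrak{g}$. Since $\mu-\theta/2\in S_{-1/2}$ we have $(\mu-\theta/2|\theta)=-1$, so the $\theta$-string through $\mu-\theta/2$ is $\{\mu-\theta/2,\ \mu+\theta/2\}$ with both members roots; hence the Chevalley relations give $[e_\theta,e_{\mu-\theta/2}]=\varepsilon(\theta,\mu-\theta/2)e_{\mu+\theta/2}$. Using $e=-\tfrac12 e_\theta$ and $u=e_{\mu-\theta/2}$ we obtain $[e,u]=-\tfrac12\varepsilon(\theta,\mu-\theta/2)e_{\mu+\theta/2}$, which rearranges to the displayed identity.

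It then remains only to substitute. For the first identity, $(e_{\mu+\theta/2}|v)|0\rangle=-2\varepsilon(\theta,\mu-\theta/2)^{-1}([e,u]|v)|0\rangle=-2\varepsilon(\theta,\mu-\theta/2)^{-1}(e|[u,v])|0\rangle$ by invariance of $(\cdot|\cdot)$. For the second, $[e_{\mu+\theta/2},v]=-2\varepsilon(\theta,\mu-\theta/2)^{-1}[[e,u],v]$. For the third, the $(-1)$-product of $V_1(\mathfrak{g})$ is $\mathbb{C}$-linear in its first argument, hence $e_{\mu+\theta/2}(-1)v=-2\varepsilon(\theta,\mu-\theta/2)^{-1}[e,u](-1)v$. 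There is no essential difficulty here; the only points that deserve care are confirming that $(e,x,f)=(-\tfrac12 e_\theta,\tfrac12\theta,e_{-\theta})$ is the $sl_2$-triple fixed earlier (so that $[e,\cdot]$ is the correct raising operator) and tracking the scalar $-2\varepsilon(\theta,\mu-\theta/2)^{-1}$ through the Chevalley relation without a sign slip, which is why I would write out $[e_\theta,e_{\mu-\theta/2}]=\varepsilon(\theta,\mu-\theta/2)e_{\mu+\theta/2}$ explicitly before dividing.
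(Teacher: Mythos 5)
Your proof is correct and follows essentially the same route as the paper: both rest on the Chevalley relation $[e_\theta,e_{\mu-\theta/2}]=\varepsilon(\theta,\mu-\theta/2)e_{\mu+\theta/2}$ together with $e=-\tfrac12 e_\theta$, the level-one products $a(1)b=(a|b)|0\rangle$, $a(0)b=[a,b]$, and invariance of $(\cdot|\cdot)$. The only difference is cosmetic: you write out all three identities via the single linear substitution, while the paper proves the first explicitly and leaves the other two to the same argument.
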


\begin{proof}
We show the first equality.
We have $e_{\mu+\theta/2}(1)e_{\nu-\theta/2}=( e_{\mu+\theta/2}|e_{\nu-\theta/2})|0\rangle$.
By the invariance of $(\cdot|\cdot)$, 
\begin{eqnarray*}
(e_{\mu+\theta/2}|e_{\nu-\theta/2})&=&\varepsilon(\theta,\mu-\theta/2)^{-1}([e_\theta,e_{\mu-\theta/2}]|e_{\nu-\theta/2})\\
&=&\varepsilon(\theta,\mu-\theta/2)^{-1}(e_\theta|[e_{\mu-\theta/2},e_{\nu-\theta/2}]).
\end{eqnarray*}
Therefore,
\begin{eqnarray*}
e_{\mu+\theta/2}(1)e_{\nu-\theta/2}&=&\varepsilon(\theta,\mu-\theta/2)^{-1}(e_\theta|[e_{\mu-\theta/2},e_{\nu-\theta/2}])|0\rangle \\
&=&-2\varepsilon(\theta,\mu-\theta/2)^{-1}(e|[e_{\mu-\theta/2},e_{\nu-\theta/2}])|0\rangle\\
&=&-2\varepsilon(\theta,\mu-\theta/2)^{-1}(e|[u,v])|0\rangle.
\end{eqnarray*}
Hence, we have $e_{\mu+\theta/2}(1)e_{\nu-\theta/2}=-2\varepsilon(\theta,\mu-\theta/2)^{-1}(e|[u,v])|0\rangle$.
\end{proof}

\begin{proof}[Proof of Proposition \ref{sec:propassignments}]
Put $B_1=\{J^{\{e_\beta\}}| \beta\in \Phi(\mathfrak{g}^\natural)\}\sqcup \{J^{\{\alpha_i^\vee\}}|
(\alpha_i|\theta)=0\}$ and $B_2=\{G^{\{e_{\mu-\theta/2}\}}| \mu\in P'\}$.
Put $B=B_1\sqcup B_2\sqcup\{\omega\}$ and fix a total order on $B$.
Since the space spanned by the generators $B$ with the basis $B$ obeys the PBW-theorem, we obtain the linear map
$\psi:W^k\rightarrow W'$ induced from the assignments of the proposition by using the PBW-basis $\tilde{B}$ of $W^k$.
That is, we set
\[
\psi(v_1(m_1)v_2(m_2)\cdots v_s(m_s)|0\rangle)=\psi(v_1)(m_1)\psi(v_2)(m_2)\cdots \psi(v_s)(m_s)|0\rangle,
\]
($s\in \mathbb{Z}_+$ with $v_i\in S$, $k_i\in \mathbb{Z}_+$, $m_i\in \mathbb{Z}_{<0}$, $i=1,\ldots,s$, where
the 
sequence of pairs $(v_1,m_1),(v_2,m_2),\ldots, (v_s,m_s)$ is non-increasing in the
lexicographical order).
We show the compatibility of the OPE ($\lambda$-bracket)
\begin{equation}\label{eqn:compatibility}
[\psi(u)_\lambda \psi(v)]=\psi[u_\lambda v],
\end{equation}
 ($u,v\in B$) under $\psi$.
Let $u,v$ be elements of $B$, and suppose that $u=\omega$ or $v=\omega$.
Then, the coefficients of $[u_\lambda v]$ in $\lambda$ are multiples of monomials of the PBW-basis $\tilde{B}$.
Since $c_\mathcal{W}=c^\natural-3/5$, by comparing the conformal weights, we have eq.\ (\ref{eqn:compatibility}).
Here, $c^\natural$ is the central charge of $V$.

Let $u,v$ be elements of $B_1$.
Then, the coefficients of $[u_\lambda v]$ in $\lambda$ are multiples of monomials of the PBW-basis $\tilde{B}$.
We show eq.\ (\ref{eqn:compatibility}).
When $\mathfrak{g}=A_2$, the set $B_1$ is empty.
Therefore, we have eq.\ (\ref{eqn:compatibility}).
Suppose $\mathfrak{g}$ is not of type $A_2$.
Then, it suffices to show $(e_\alpha,e_\beta)^\natural=k^\natural(e_\alpha|e_\beta)$.
Since $\mathfrak{g}_0=\mathfrak{g}^\natural\oplus \mathbb{C} \theta$, we have $(e_\alpha,e_\beta)^\natural=(k+h^\vee/2)(e_\alpha|e_\beta)-(1/4)2h^{\vee,\natural}(e_\alpha|e_\beta)=(h^\vee/3-h^{\vee,\natural}/2)(e_\alpha|e_\beta)$.
Since $h^\vee/3-h^{\vee,\natural}/2=1$, we have eq.\ (\ref{eqn:compatibility}).
Consider the vertex operator subalgebra $T:=\langle B_1,\omega\rangle_{\mathrm{v.a.}}\subset W^k$.
Here,  $\langle A\rangle_{\mathrm{v.a.}}$  the smallest vertex subalgebra containing the subset $A\subset W^k$.
Since the OPEs among the elements of $B_1\sqcup \{\omega\}$ are compatible under $\psi$, the restriction $\psi|_T:T\rightarrow W'$ is a vertex operator algebra homomorphism.

Let $u$ and $v$ be elements of $B$, and suppose $u\in B_1\sqcup\{\omega\}$ or $v\in B_1\sqcup\{\omega\}$.
Then, the coefficients of $[u_\lambda v]$ in $\lambda$ are multiples of monomials of the PBW-basis $\tilde{B}$,
and we have eq.\ (\ref{eqn:compatibility}).

Finally, let $u,v$ be elements of $B_2$ with $\mu,\nu\in P'$
such that $u=G^{\{e_{\mu-\theta/2}\}}$ and $v=G^{\{e_{\nu-\theta/2}\}}$.
Then, the coefficients of $[u_\lambda v]$ in $\lambda$ belong to the vertex operator subalgebra $T$.
We show the compatibility of the OPE $\psi[u_\lambda v]=[\psi(u)_\lambda \psi(v)]$.
Since $W,W'$ are $(1/2)\mathbb{Z}_+$-graded VOAs, and the vectors $u,v,\psi u,\psi v$ are of conformal weight $3/2$, 
we have $\psi(u(i)v)=0=\psi u(i)\psi v$ for $i\geq 3$.
Therefore, it suffices to show $\psi(u(i)v)=\psi u(i)\psi v$ for $i=0,1,2$.
By eq.\ (\ref{eqn:omega}), it suffices to show $\psi(u(0)v)=\psi u(0)\psi v$.
We have $(e_\mu\otimes |3/4\rangle)(0) (e_\nu\otimes |3/4\rangle)= (e_\mu(-3/2)e_\nu)\otimes (h^\vee/3)|0\rangle-(e_\mu(1/2)e_\nu)\otimes (5h^\vee/6)\omega^\mathrm{Vir}$.
Therefore, by Proposition \ref{sec:propkw} and Lemma \ref{sec:lemeqn}--\ref{sec:lemtaiou}, we have $\psi(u(0)v)=\psi u(0)\psi v$, since $\psi|_T$ is a vertex operator algebra homomorphism.

Hence, we have eq.\ (\ref{eqn:compatibility}) for each $u,v\in B$.
Therefore, $\psi:W^k\rightarrow W'$ is a homomorphism of vertex operator algebras.
Since  $\psi(B)$ generates the vertex algebra $W'$, $\psi$ is surjective.
Since $W'$ is a simple vertex algebra, the homomorphism $\psi$ induces the isomorphism $W\cong W'$.
Thus, we have the proposition.
\end{proof}


By Proposition \ref{sec:propassignments}, we obtain Theorem \ref{sec:mainthm} except for the case $(\mathfrak{g},k)=(C_2,1/2)$,
which is proved similarly.
It will be considered in the forthcoming paper.

\begin{rem}
Suppose $\mathfrak{g}=D_4,E_6,E_7,E_8$. Then, the number $k=-h^\vee/6$ is not an admissible number.
Therefore, $\mathcal{W}_k (\mathfrak{g},f_\theta)$ is a new example of a $C_2$-cofinite $\mathcal{W}$-algebra.
When $\mathfrak{g}=C_2,A_1,A_2,G_2,F_4$ and $k=1/2,-1/3,-1/2,-2/3,-3/2$, the levels $k$ are admissible numbers, and the simple $\mathcal{W}$-algebras $\mathcal{W}_k(\mathfrak{g},f)$ have already been known to be $C_2$-cofinite \cite{A3}.
The vertex operator algebra $\mathcal{W}_{-1/2}(A_2,f_\theta)$ with certain other conformal vector is a 
{\it Bershadsky-Polyakov algebra}, and it has already known to be rational \cite{A4}.
Note that the abelian intertwining subalgebra $V_{\sqrt{3}A_1}\otimes L(-3/5,0) \oplus V_{\sqrt{3}A_1+\sqrt{3}\alpha/2}\otimes L(-3/5,3/4)$
of $V_{(\sqrt{3}/2)A_1}\otimes (L(-3/5,0)\oplus L(-3/5,3/4))$ 
is considered in \cite{FJM}.
\end{rem}

\section{Proof of Lemma \ref{sec:lemeqn}}

In this section, we give the proof of Lemma \ref{sec:lemeqn} for $\mathfrak{g}=D_4$ and $E_8$, which are the smallest and
largest examples with non-admissible levels, by using structure of the lattice vertex operator algebras $V_{D_4}$ and $V_{E_8}$.
The remaining cases are proved similarly (for non simply-laced cases, it is convenient to consider the ``folding''
 (cf. \cite[\S 7.9]{Kac})).
Let us take over the setting and notation in Section \ref{sec:levelone}.

Suppose that $\mathfrak{g}$ is simply-laced, that is, $\mathfrak{g}=A_2,D_4,E_6,E_7,E_8$.
Let $Q$ denote the root lattice of $\mathfrak{g}$.
Fix orientations $i\rightarrow j$ on the edges in the Dynkin diagram,
where $i,j=1,\ldots,l$ are nodes of the Dynkin diagram such that $(\alpha_i|\alpha_j)=-1$.
Define the $2$-cocycle $\varepsilon:Q\times Q\rightarrow \{1,-1\}$ 
by bimultiplicatively extending the assignment
\[
\varepsilon(\alpha_i,\alpha_j)=\begin{cases}-1& i=j,\ \mathrm{or}\ i \rightarrow  j \\
1& \mathrm{otherwise},\end{cases}
\]
(cf. \cite[\S 7.8]{Kac}).
Note that $\varepsilon(\theta,-\theta)=\varepsilon(-\theta,\theta)=-1$.
Consider the lattice vertex operator algebra 
\[
V_Q=\bigoplus_{n=0}^\infty (V_Q)_n
\]
 associated with $Q$ and the $2$-cocycle $\varepsilon$.
Since $V_Q$ is isomorphic to $V_1(\mathfrak{g})$, we consider $V_Q$ instead of $V_1(\mathfrak{g})$.
The weight $1$ subspace $(V_Q)_1$ with the Lie bracket $[a,b]=a(0)b$ is isomorphic to
the Lie algebra $\mathfrak{g}$.
We identify $\mathfrak{g} \cong (V_Q)_1$.
Then, the basis
\[
\{e^\alpha|\alpha\in \Phi(Q)\}\sqcup \{\alpha_i|i=1,\ldots,l\}\subset (V_Q)_1
\]
is a Chevalley basis of $\mathfrak{g}$ with $\varepsilon|_{\Phi(\mathfrak{g})\times \Phi(\mathfrak{g})}:\Phi(\mathfrak{g})\times \Phi(\mathfrak{g})\rightarrow \{1,-1\}$
satisfying the assumption in \S \ref{sec:levelone}.
Note that for $\alpha\in Q$, the vector $e^\alpha$ belongs to $(V_Q)_1$ if and only if $(\alpha|\alpha)=2$.

Then, it suffices to show the following lemma to prove Lemma \ref{sec:lemeqn}.
Let $\alpha,\beta$ be elements of $S_{-1/2}$.

\begin{lem}\label{sec:lemeqn2}
\begin{enumerate}
\item Suppose $(\alpha|\beta)=1$. Then,
\begin{eqnarray}\label{eqn:1lambda0}
\frac{h^\vee}{3}=\sharp\{\gamma\in S_{1/2}|(\alpha|-\gamma)=0,(\gamma|\beta)=-1\}.
\end{eqnarray}
\item Suppose $(\alpha|\beta)=0$. Then,
\begin{eqnarray}\label{eqn:0lambda0}
\sum_{\gamma\in S_{1/2},(\alpha|-\gamma)=0,(\gamma|\beta)=-1}\gamma=\left(\frac{h^\vee}{6}-1\right)(2\theta+\alpha-\beta).
\end{eqnarray}

\item Let $\alpha$ be an element of $S_{-1/2}$. Then,
\begin{eqnarray}\label{eqn:-1lambda0}
&&-\frac{h^\vee}{6}\left(\left(\alpha+\frac{1}{2}\theta\right)(-1)^2+\left(\alpha+\frac{1}{2}\theta\right)(-2)\right)|0\rangle
\\
&&=\left(\frac{5h^\vee}{6}-1-h^{\vee,\natural}\right)\omega^\natural \nonumber\\
&&\quad-\frac{1}{2}\sum_{\gamma\in S_{1/2},(\alpha|-\gamma)=0} \left( (\alpha-\gamma+\theta)(-1)^2+(\alpha-\gamma+\theta)(-2)\right)|0\rangle\nonumber\\
&&\quad-\left(\alpha+\frac{1}{2}\theta\right)(-1)^2|0\rangle
-\left(-\frac{h^\vee}{6}+1\right)\left(\alpha+\frac{1}{2}\theta\right)(-2)|0\rangle.\nonumber
\end{eqnarray}
\end{enumerate}
\end{lem}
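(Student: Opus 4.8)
\emph{Proof proposal.} The plan is to prove the three identities of Lemma~\ref{sec:lemeqn2} for $\mathfrak g=D_4$ and $\mathfrak g=E_8$ by working inside the lattice realizations $V_{D_4}\cong V_1(D_4)$ and $V_{E_8}\cong V_1(E_8)$ and translating them into combinatorial facts about the root systems together with the structure of the Heisenberg subalgebra; the other simply-laced cases $A_2,E_6,E_7$ are handled identically and $G_2,F_4$ by folding. First I would record the dictionary: under the minimal gradation $\mathfrak g_{\pm 1/2}=\bigoplus_{\gamma\in S_{\pm 1/2}}\mathbb C e^\gamma$, $\Phi(\mathfrak g^\natural)=\{\delta\in\Phi(\mathfrak g)\mid(\delta|\theta)=0\}$, the orthogonal projection of $h\in\mathfrak h$ onto $\mathfrak h^\natural$ is $h^\natural=h-\tfrac12(h|\theta)\theta$, and $[a,b]=a(0)b$. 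Two elementary computations in $V_Q$ drive everything: $e^\delta(-1)e^{-\delta}=-\tfrac12(\delta(-1)^2+\delta(-2))|0\rangle$ for a root $\delta$ (using $\varepsilon(\delta,-\delta)=-1$), and $\sum_{\delta\in\Phi(\mathfrak g^\natural)}(\delta|v)^2=2h^{\vee,\natural}(v|v)$ for $v\in\mathfrak h$. Combined with $\sum_{\delta\in\Phi(\mathfrak g^\natural)}\delta(-2)|0\rangle=0$, these yield that the Segal--Sugawara vector of $V=V_1(\mathfrak g^\natural)$ equals $\omega^\natural=\tfrac12\sum_i h_i(-1)^2|0\rangle$ for an orthonormal basis $\{h_i\}$ of $\mathfrak h^\natural$ in the level-one cases $D_4,E_8$ (for $G_2$ one inserts the obvious rational factor). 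In particular $(3)$ lives entirely inside the Heisenberg Fock space $S(\hat{\mathfrak h}^-)|0\rangle$.

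For $(3)$ I would then unwind the relevant terms of Lemma~\ref{sec:lemeqn} with $u=e_\alpha$, $v=e_{-\alpha-\theta}$, where $-\alpha-\theta$ is the unique element of $S_{-1/2}$ pairing to $-1$ with $\alpha$ (since $-\theta$ is the only root with $(\,\cdot\,|\theta)=-2$). Each surviving summand $[e_\alpha,e_{-\gamma+\theta}]^\natural(-1)[e_\gamma,e_{-\alpha-\theta}]^\natural$ with $(\alpha|\gamma)=0$ becomes $-\tfrac12(\delta(-1)^2+\delta(-2))|0\rangle$ with $\delta=\alpha-\gamma+\theta\in\Phi(\mathfrak g^\natural)$, the $\gamma=\alpha+\theta$ summand produces a multiple of $(\alpha+\tfrac12\theta)(-1)^2|0\rangle$, and $\partial([[e,u],v]^\natural)$ produces a multiple of $(\alpha+\tfrac12\theta)(-2)|0\rangle$. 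Matching the $h(-2)|0\rangle$- and $h(-1)^2|0\rangle$-monomials reduces $(3)$ to vector identities such as $\sum_{\gamma\in S_{1/2},\,(\alpha|\gamma)=0}(\alpha-\gamma+\theta)=(\tfrac{h^\vee}{3}-1)(2\alpha+\theta)$. For $E_8$ such a sum is fixed by $\mathrm{Stab}_{W(\mathfrak g^\natural)}(\alpha)\cong W(E_6)$, hence lies in $\mathbb C\theta+\mathbb C\alpha$, and its coefficients are forced by pairing against $\theta$ and $\alpha$, bringing in only the cardinality $\#\{\gamma\in S_{1/2}\mid(\alpha|\gamma)=0\}$, which the sum-of-squares identity computes; for $D_4$ the identity is verified directly.

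Finally, parts $(1)$ and $(2)$ are the genuine finite input: direct computations in $\Phi(\mathfrak g)$. Here I use that $W(\mathfrak g^\natural)=\mathrm{Stab}_{W(\mathfrak g)}(\theta)$ preserves $S_{\pm 1/2}$ and $(\,\cdot\,|\,\cdot\,)$, and that $\mathfrak g_{-1/2}$ is the multiplicity-free (minuscule) $\mathfrak g^\natural$-module with weight set $S_{-1/2}$. For $\mathfrak g=D_4$ ($\mathfrak g^\natural=A_1^{\times 3}$, $|S_{1/2}|=8$, $h^\vee=6$) this is immediate brute force over the $8$ weights: $(2)$ is vacuous and $(1)$ gives $2=h^\vee/3$. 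For $\mathfrak g=E_8$ ($\mathfrak g^\natural=E_7$, $S_{-1/2}$ the $56$ weights of $V(\varpi_7)$, $h^\vee=30$) one checks that ordered pairs $(\alpha,\beta)\in S_{-1/2}\times S_{-1/2}$ with a fixed value of $(\alpha|\beta)$ form a single $W(E_7)$-orbit, so it suffices to enumerate for one representative pair with $(\alpha|\beta)=1$ and one with $(\alpha|\beta)=0$; using standard coordinates for $\Phi(E_8)$, $\theta$, and the decomposition $A_1\times E_7\subset E_8$ one obtains $h^\vee/3=10$ in $(1)$ and $(\tfrac{h^\vee}{6}-1)(2\theta+\alpha-\beta)=4(2\theta+\alpha-\beta)$ in $(2)$. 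I expect the main obstacle to be precisely this $E_8$ enumeration, together with the sign-bookkeeping of the cocycle $\varepsilon$ needed to reduce Lemma~\ref{sec:lemeqn} to Lemma~\ref{sec:lemeqn2} and to evaluate the lattice products in part $(3)$; organizing the $56$ weights through their $E_6$-orbit structure and exploiting $W(\mathfrak g^\natural)$-invariance is what keeps the computation tractable.
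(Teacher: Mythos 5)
Your proposal is correct in outline, targets the right identities, and its claimed outputs match the paper's, but it organizes the verification differently. The paper's proof of Lemma \ref{sec:lemeqn2} is pure enumeration: it writes $\Phi(D_4)$ and $\Phi(E_8)$ in $\epsilon$-coordinates, reduces the pairs $(\alpha,\beta)$ only modulo the smaller stabilizer $W_{D_8}^{\theta}$ (resp.\ $W_{D_4}^{\theta}$), which forces three or four representative pairs per case, lists the sets $X$ (or $-X+\theta$) element by element, and for part (3) expands $\omega^{\natural}$ explicitly in the simple-root basis before comparing both sides. You instead (i) reduce modulo the full stabilizer $W(\mathfrak{g}^{\natural})=\mathrm{Stab}_{W(\mathfrak{g})}(\theta)$ and use the minuscule structure of $S_{-1/2}$ (for $E_8$, the $27/\overline{27}$ decomposition under $\mathrm{Stab}_{W(E_7)}(\alpha)\cong W(E_6)$) to get a \emph{single} orbit of ordered pairs for each value of $(\alpha|\beta)$, so one representative suffices in (1) and (2); and (ii) for part (3) you replace the explicit expansion by $\omega^{\natural}=\tfrac12\sum_i h_i(-1)^2|0\rangle$ in the level-one lattice realization together with an invariance argument: the sums over $X$ are $\mathrm{Stab}_{W(\mathfrak{g}^\natural)}(\alpha)$-invariant, hence pinned down by pairing against $\theta$ and $\alpha+\theta/2$ (and, for the $(-1)^2$-part, by the two-dimensionality of the space of $W(E_6)$-invariant quadratic forms on $\mathfrak{h}^{\natural}$, a point your sketch only gestures at), so everything reduces to the cardinality $\sharp X$, computable from $\sum_{\gamma\in\Phi}(\gamma|v)(\gamma|w)=2h^{\vee}(v|w)$. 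Your reduction of (3) to the vector identity $\sum_{\gamma\in S_{1/2},(\alpha|\gamma)=0}(\alpha-\gamma+\theta)=(\tfrac{h^\vee}{3}-1)(2\alpha+\theta)$ plus the corresponding quadratic identity is exactly right (both check out for $D_4$ and $E_8$, and are consistent with the paper's explicit lists); your route buys a much shorter part (3) at the price of a little invariant theory, while the paper's brute force buys complete explicitness.

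Three points to tighten. First, the enumeration at the chosen representative pairs for $E_8$ in (1)--(2) --- the step you yourself flag as the main obstacle --- is precisely the content of the paper's proof and still has to be carried out; your claimed answers ($\sharp X=10$ and coefficient $4$) are the correct ones. Second, ``for $D_4$, (2) is vacuous'' is loose: pairs with $(\alpha|\beta)=0$ do exist, and what happens is that the right-hand side vanishes because $h^{\vee}/6-1=0$, while the left-hand side vanishes only because $X=\emptyset$ (equivalently, a nonempty sum would pair strictly positively with $\theta$); your eight-weight brute force does cover this, but the identity is not empty of content. Third, ``unwinding Lemma \ref{sec:lemeqn}'' can only serve as motivation for the shape of the terms, since in the paper that lemma is deduced \emph{from} Lemma \ref{sec:lemeqn2}; fortunately your actual verification of (3) --- expressing $\omega^{\natural}$ in Heisenberg form and matching the $(-2)$- and $(-1)^2$-monomials --- does not rely on it, so no circularity arises.
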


We prove the remaining equations of the lemma in the next section for $\mathfrak{g}=D_4$ and $E_8$,
which are the smallest and largest examples with non-admissible levels.

\begin{proof}[Proof of Lemma \ref{sec:lemeqn} (when $\mathfrak{g}$ is simply-laced)]
Let $\alpha$, $\beta$ be elements of $S_{-1/2}$.
Set $u=e^\alpha$ and $v=e^\beta$.
Since the vectors $e^\sigma$, $\sigma\in S_{-1/2}$ span $\mathfrak{g}_{-1/2}$, it suffices to show
eq.\ (\ref{eqn:lambda0}) for $u,v$.
Note that $(\alpha|\beta)=-1,0,1,2$ and $(\alpha|\theta)=(\beta|\theta)=-1$.
We show by case-by-case computation.
Put $X=\{\gamma\in S_{1/2}|(\alpha|-\gamma)=0,(\gamma|\beta)=-1\}$.
Put $C:=\varepsilon(\theta,\alpha)\varepsilon(\theta+\alpha,\beta)$.
Then, for any $\gamma\in S_{1/2}$, we have $C=\varepsilon(\gamma,-\gamma+\theta)\varepsilon(\alpha,-\gamma+\theta)\varepsilon(\gamma,\beta)\varepsilon(\alpha-\gamma+\theta,\gamma+\beta)$.

When $(\alpha|\beta)=2$, the both hand sides of eq.\ (\ref{eqn:lambda0}) are $0$.

Suppose $(\alpha|\beta)=1$.
Since $(\alpha|\beta)=1$, the vector $e^{\alpha+\beta+\theta}$ is of conformal weight $2$, and we have 
$[[e,u],v]=0$ and $([e,u]|v)=0$.
The LHS of eq.\ (\ref{eqn:lambda0}) is equal to $-Ch^\vee/3 e^{\alpha+\beta+\theta}$.
The RHS of eq.\ (\ref{eqn:lambda0}) is equal to $-\sum_{\gamma\in X}C e^{\alpha+\beta+\theta}$.
By eq.\ (\ref{eqn:1lambda0}),
$-h^\vee/3e^{\alpha+\beta+\theta}=-\sum_{\gamma\in X}e^{\alpha+\beta+\theta}$.
Hence, we have eq.\ (\ref{eqn:lambda0}).

Suppose $(\alpha|\beta)=0$.
By comparing the orthogonal projections to $\mathbb{C}\theta$ of the both sides of eq.\ (\ref{eqn:0lambda0}),
\begin{equation}\label{eqn:0lambda1}
\sharp\{\gamma\in S_{1/2}|(\alpha|-\gamma)=0,(\gamma|\beta)=-1\}=4(h^\vee/6-1).
\end{equation}
We have $e^{\alpha+\beta+\theta}\in (V_Q)_1$, and $([e,u]|v)=0$.
The LHS of eq.\ (\ref{eqn:lambda0}) is equal to $-Ch^\vee/3 (\alpha+1/2 \theta)(-1)e^{\alpha+\beta+\theta}$.
The RHS of eq.\ (\ref{eqn:lambda0}) is equal to 
$-\sum_{\gamma\in X}C (\alpha-\gamma+\theta)(-1)e^{\alpha+\beta+\theta}-C(\alpha+1/2\theta)(-1)e^{\alpha+\beta+\theta}-C(\alpha+1/2\theta)(-1)e^{\alpha+\beta+\theta}-C(-h^\vee/6+1)(\theta+\alpha+\beta)(-1)e^{\alpha+\beta+\theta}$.
By eq.\ (\ref{eqn:0lambda0}) and (\ref{eqn:0lambda1}), $\sum_{\gamma\in X}(\alpha-\gamma+\theta)(-1)e^{\alpha+\beta+\theta}=(h^\vee/6-1)(2\theta+3\alpha+\beta)(-1)e^{\alpha+\beta+\theta}$.
Therefore, we have eq.\ (\ref{eqn:lambda0}), as desired.

Suppose $(\alpha|\beta)=-1$, that is, $\beta=-\alpha-\theta$.
Similarly, we have eq.\ (\ref{eqn:lambda0}) by using eq.\ (\ref{eqn:-1lambda0}).

Thus, we have the lemma.
\end{proof}

\subsection{Proof of Lemma \ref{sec:lemeqn2}}

We show Lemma \ref{sec:lemeqn2} when $\mathfrak{g}=D_4$ and $E_8$, which are the smallest and largest
example with non-admissible levels.
The remaining cases are proved similarly.

\subsubsection{The case $\mathfrak{g}=D_4$}

\begin{figure}[htbp]
\begin{center}
\def\maru{\lower3pt\hbox{\Large\hskip-0.0pt$\circ\hskip-0.0pt$}}
$$
\xymatrix@M-3.77pt@C40pt@R5pt{
\alpha_1&&\alpha_3\cr
\maru\ar@{-}[ddr]&&\maru\ar@{-}[ddl]\cr
\vbox to 35pt{}\cr
&\maru&\cr
&\alpha_2&\cr
\vbox to 20pt{}\cr
\ar@{.}[uuur]\maru&&\maru\ar@{-}[uuul]\cr
\theta&&\alpha_{4}\cr
\vbox to 20pt{}\cr
}
$$
\caption{Dynkin diagram of $D_{4}$}
\label{fig:fig2}
\end{center}
\end{figure}
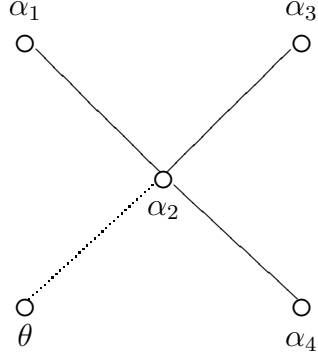

Suppose $\mathfrak{g}=D_4$. Then, $h^\vee=6$, and $h^{\vee,\natural}=2$.
We explicitly use the root system of $D_4$.
The root system $\Phi(D_4)$ of $D_4$ consists of the following $24$ elements (cf. \cite{Bou}):
\[
\mu \epsilon_i+\nu \epsilon_j \ (i,j=1,\ldots,4, i<j, \mu,\nu=\pm 1),
\]
with the indeterminate elements $\epsilon_1,\ldots,\epsilon_4$ with the bilinear form $(\cdot|\cdot)$ defined by linearly extending $(\epsilon_i|\epsilon_j)=\delta_{i,j}$.
Consider the simple roots 
$\alpha_i=\epsilon_{i}-\epsilon_{i+1}$ ($i=1,2,3$) and $\alpha_4=\epsilon_3+\epsilon_4$
with the highest root $\theta=\epsilon_1+\epsilon_2$.
The Dynkin diagram of $D_4$ is illustrated in Figure \ref{fig:fig2}.
Set
\[
S_{\pm 1/2}:=\{\alpha\in \Phi(D_4)| (\alpha|\theta/2)=\pm 1/2\}.
\]
Then, the sets of vectors $\{e^\alpha\in D_4|\alpha\in S_{\pm 1/2}\}$ are bases of $\mathfrak{g}_{\pm 1/2}$.
Explicitly, we have
\begin{eqnarray*}
S_{1/2}=\{\epsilon_i+\mu\epsilon_j|i\in\{1,2\},j\in\{3,4\},\mu\in\{\pm 1\}\}
\end{eqnarray*}
and
\begin{eqnarray*}
S_{-1/2}=\{-\epsilon_i+\mu\epsilon_j|i\in\{1,2\},j\in\{3,4\},\mu\in\{\pm 1\}\}
\end{eqnarray*}
Note that
\[
S_{\pm 1/2}=\{\alpha\in \Phi(D_4)| (\alpha|\varpi_2)=\pm 1\}.
\]
Here, $\varpi_1,\ldots,\varpi_4$ are the fundamental weights of $D_4$.

We show Lemma \ref{sec:lemeqn2} by case-by-case computation.

Note that the Weyl group $W_{D_4}$ of $D_4$ acts on $\Phi(D_4)$
and $S_{\pm 1/2}$ are invariant under the stabilizer $W_{D_4}^\theta$ of $\theta$,
and $W_{D_4}^\theta$ coincides with the subgroup $(\mathrm{Sym}_2\times \mathrm{Sym}_2)\ltimes (\mathbb{Z}/2\mathbb{Z})$.
Here, two $\mathrm{Sym}_2$'s are the symmetry group of the set $\{\epsilon_1,\epsilon_2\}$
and $\{\epsilon_4,\epsilon_3\}$,
and $(\mathbb{Z}/2\mathbb{Z})$ the transformations $\epsilon_i\mapsto \nu_i\epsilon_i$ ($i=1,\ldots,4$) with $\nu_1=\nu_2=1$ and $\nu_3=\nu_4\in \{\pm 1\}$.

Put $X=\{\gamma\in S_{1/2}|(\alpha|-\gamma)=0,(\gamma|\beta)=-1\}$.
Note that $\{\gamma\in S_{1/2}|(-\epsilon_1+\epsilon_3|\gamma)=0\}=\{\epsilon_1+\epsilon_3,\epsilon_2+\epsilon_4,\epsilon_2-\epsilon_4\}$.


\paragraph{{\bf The case (1)}}
Suppose $(\alpha|\beta)=1$.
We show eq.\ (\ref{eqn:1lambda0}).
The LHS of eq.\ (\ref{eqn:1lambda0}) is equal to $2$.
Therefore, we show
\[
\sharp\{\gamma\in S_{1/2}|(\alpha|-\gamma)=0,(\gamma|\beta)=-1\}=2.
\]
The all pairs $(\alpha,\beta)$ such that $\alpha,\beta\in S_{-1/2}$, $(\alpha|\beta)=1$ are given by the following pairs:
\[
(-\epsilon_i+\mu \epsilon_j, -\epsilon_i+\nu\epsilon_k),\quad \mu,\nu\in\{\pm 1\},i\in\{1,2\},j,k\in\{3,4\},j\neq k;
\]
\[
(-\epsilon_i+\mu\epsilon_j,-\epsilon_k+\mu\epsilon_j),\quad \mu\in\{\pm 1\},i,k\in\{1,2\},i\neq k,j\in\{3,4\};
\]

By the action of the Weyl group $W_{D_4}$, it suffices to consider $(\alpha,\beta)=(-\epsilon_1+\epsilon_3,-\epsilon_1+\epsilon_4)$, $(-\epsilon_1+\epsilon_3,-\epsilon_1-\epsilon_4)$,$(-\epsilon_1+\epsilon_3,-\epsilon_2+\epsilon_3)$.

Set $\alpha=-\epsilon_1+\epsilon_3$ and $\beta=-\epsilon_1+\epsilon_4$.
Then, $X=\{\epsilon_1+\epsilon_3,-\epsilon_2-\epsilon_4\}$.
Hence, $\sharp X=2$.

Set $\alpha=-\epsilon_1+\epsilon_3$ and $\beta=-\epsilon_1-\epsilon_4$.
Then, $X=\{\epsilon_1+\epsilon_3,-\epsilon_2+\epsilon_4\}$.
Hence, $\sharp X=2$.

Set $\alpha=-\epsilon_1+\epsilon_3$ and $\beta=-\epsilon_2+\epsilon_3$.
Then, $X=\{\epsilon_2+\nu\epsilon_4|\nu \in \{\pm 1\}\}$.
Hence, $\sharp X=2$.

Thus, we have eq.\ (\ref{eqn:1lambda0}).

\paragraph{{\bf The case (2)}}
Suppose $(\alpha|\beta)=0$.
We show eq.\ (\ref{eqn:0lambda0}).
The LHS of eq.\ (\ref{eqn:0lambda0}) is $0$.
Therefore, we show $X=\emptyset$.

The all pairs $(\alpha,\beta)$ such that $\alpha,\beta\in S_{-1/2}$, $(\alpha|\beta)=0$ are given by the following pairs:
\[
(-\epsilon_i+\mu\epsilon_j, -\epsilon_i-\mu\epsilon_j),\quad \mu\in\{\pm 1\},i\in\{1,2\},j\in \{3,4\};
\]
\[
(-\epsilon_i+\mu\epsilon_s,-\epsilon_j+\nu\epsilon_t)\quad 
\mu,\nu\in\{\pm 1\},\{i,j\}=\{1,2\},\{s,t\}=\{3,4\}.
\]

By the action of the Weyl group $W_{D_4}$, it suffices to consider $(\alpha,\beta)=(-\epsilon_1+\epsilon_3,-\epsilon_1-\epsilon_3)$, $(-\epsilon_1+\epsilon_3,-\epsilon_2+\epsilon_4)$, $(-\epsilon_1+\epsilon_3,-\epsilon_2-\epsilon_4)$.
For each case, we have $X=\emptyset$.

Thus, we have  eq.\ (\ref{eqn:0lambda0}).

\paragraph{{\bf The case (3)}}
Let $\alpha$ be an element of $S_{-1/2}$.
We show eq.\ (\ref{eqn:-1lambda0}).
By the action of the Weyl subgroup $W_{D_4}$, it suffices to consider $\alpha=-\epsilon_2+\epsilon_3=-\alpha_2$.
Set $\alpha=-\epsilon_2+\epsilon_3$.
Put $X=\{\gamma\in S_{1/2}|(\alpha|\gamma)=0\}$.
Then, $X=\{\epsilon_2+\epsilon_3,\epsilon_1+\epsilon_4,\epsilon_1-\epsilon_4\}$.
Note that $X=\{\alpha_2+\alpha_3+\alpha_4,\alpha_1+\alpha_2+\alpha_4,\alpha_1+\alpha_2+\alpha_3\}$.
We have
\begin{eqnarray*}
\omega^\natural&=&\frac{1}{4}\sum_{i=1,3,4}\alpha_i(-1)^2|0\rangle.
\end{eqnarray*}
We have
$
\theta=\alpha_1+2\alpha_2+\alpha_3+\alpha_4.
$
Therefore, $\alpha+1/2\theta=1/2\sum_{i=1,3,4}\alpha_i$.
Then, the RHS of eq.\ (\ref{eqn:-1lambda0}) is equal to
\begin{eqnarray*}
&&\frac{1}{2}\sum_{i=1,3,4}\alpha_i(-1)^2|0\rangle-\frac{1}{2}\sum_{i=1,3,4}\bigl(\alpha_i(-1)^2+\alpha_i(-2)\bigr)|0\rangle\\
&&\quad-\left(\frac{1}{2}\sum_{i=1,3,4}\alpha_i\right)(-1)^2|0\rangle=-\biggl(\left(\alpha+\frac{1}{2}\theta\right)
(-1)^2\\
&&\quad+\left(\alpha+\frac{1}{2}\theta\right)(-2)\biggr)|0\rangle,
\end{eqnarray*}
which coincides with the LHS.
Hence, we have eq.\ (\ref{eqn:-1lambda0}).

Thus, we have Lemma \ref{sec:lemeqn2}.

\subsubsection{The case $\mathfrak{g}=E_8$}

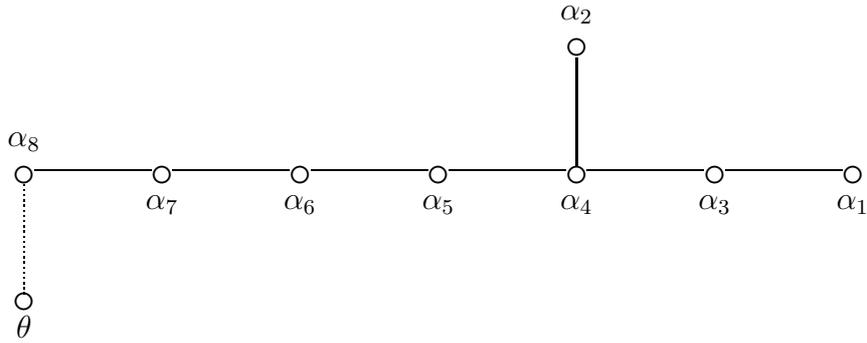
\begin{figure}[htbp]
\begin{center}
\def\maru{\lower3pt\hbox{\Large\hskip-0.0pt$\circ\hskip-0.0pt$}}
$$
\xymatrix@M-3.77pt@C40pt@R5pt{
&&&&\alpha_2\cr
&&&&\maru\ar@{-}[ddd]\cr
\vbox to 20pt{}\cr
\alpha_8\cr
\maru\ar@{-}[r]&\maru\ar@{-}[r]&\maru\ar@{-}[r]&\maru\ar@{-}[r]&\maru\ar@{-}[r]&\maru\ar@{-}[r]&\maru\cr
&\alpha_7&\alpha_6&\alpha_5&\alpha_4&\alpha_3&\alpha_1\cr
\vbox to 20pt{}\cr
\ar@{.}[uuu]\maru\cr
\theta\cr
\vbox to 20pt{}\cr
}
$$
\caption{Dynkin diagram of $E_8$}
\label{fig:fig1}
\end{center}
\end{figure}

Suppose $\mathfrak{g}=E_8$.
We explicitly use the root system of $E_8$.
The root system $\Phi(E_8)$ of $E_8$ consists of the following $240$ elements (cf. \cite{Bou}):
\[
\mu \epsilon_i+\nu \epsilon_j \ (i,j=1,\ldots,8, i<j, \mu,\nu=\pm 1),
\]
\[
\frac{1}{2}\sum_{i=1}^8 (-1)^{\nu_i}\epsilon_i\ \left(\nu_1,\ldots,\nu_8=\pm 1\ \mbox{such that} \sum_{i=1}^8 \nu_i \ \mbox{is even}\right),
\]
with the indeterminate elements $\epsilon_1,\ldots,\epsilon_8$ with the bilinear form $(\cdot|\cdot)$ defined by linearly extending $(\epsilon_i|\epsilon_j)=\delta_{i,j}$.
Consider the simple roots $\alpha_1=1/2(\epsilon_1-\epsilon_2-\epsilon_3-\epsilon_4-\epsilon_5-\epsilon_6-\epsilon_7+\epsilon_8)$,
$\alpha_2=\epsilon_1+\epsilon_2$, $\alpha_i=-\epsilon_{i-2}+\epsilon_{i-1}$ ($i=3,\ldots,8$)
with the highest root $\theta=\epsilon_7+\epsilon_8$.
The Dynkin diagram of $E_8$ is illustrated in Figure \ref{fig:fig1}.
Set
\[
S_{\pm 1/2}:=\{\alpha\in \Phi(E_8)| (\alpha|\theta/2)=\pm 1/2\}.
\]
Then, the sets of vectors $\{e^\alpha\in E_8|\alpha\in S_{\pm 1/2}\}$ are bases of $\mathfrak{g}_{\pm 1/2}$.
Explicitly, we have
\begin{eqnarray*}
&&S_{1/2}=\{\mu\epsilon_i+\epsilon_7,\mu\epsilon_i+\epsilon_8|i=1,\ldots,6,\mu\in\{\pm 1\}\}\\
&&\quad\sqcup \left\{\left.\frac{1}{2}\sum_{i=1}^8 (-1)^{\nu_i} \epsilon_i\right|\nu_1,\ldots,\nu_6\in\{\pm 1\},\nu_7=\nu_8=1,\sum_{i=1}^8 \nu_i:\ \mbox{even}\right\},
\end{eqnarray*}
and
\begin{eqnarray*}
&&S_{-1/2}=\{\mu\epsilon_i-\epsilon_7,\mu\epsilon_i-\epsilon_8|i=1,\ldots,6,\mu\in\{\pm 1\}\}\\
&&\quad\sqcup \left\{\left.\frac{1}{2}\sum_{i=1}^8 (-1)^{\nu_i} \epsilon_i\right|\nu_1,\ldots,\nu_6\in\{\pm 1\},\nu_7=\nu_8=-1,\sum_{i=1}^8 \nu_i:\ \mbox{even}\right\}.
\end{eqnarray*}
Note that
\[
S_{\pm 1/2}=\{\alpha\in \Phi(E_8)| (\alpha|\varpi_8)=\pm 1\}.
\]
Here, $\varpi_1,\ldots,\varpi_8$ are the fundamental weights of $E_8$.

We show Lemma \ref{sec:lemeqn2} by case-by-case computation.

Note that the Weyl group $W_{D_8}$ of $D_8\subset E_8$ acts on $\Phi(E_8)$
and $S_{\pm 1/2}$ are invariant under the stabilizer $W_{D_8}^\theta$ of $\theta$,
and $W_{D_8}^\theta$ coincides with the subgroup $(\mathrm{Sym}_6\times \mathrm{Sym}_2)\ltimes (\mathbb{Z}/2\mathbb{Z})^5$.
Here, $\mathrm{Sym}_6$ and $\mathrm{Sym}_2$ are the symmetry group of the set $\{\epsilon_1,\ldots,\epsilon_6\}$
and $\{\epsilon_7,\epsilon_8\}$,
and $(\mathbb{Z}/2\mathbb{Z})^5$ the transformations $\epsilon_i\mapsto \nu_i\epsilon_i$ ($i=1,\ldots,6$) with $\nu_1,\ldots,\nu_6\in \{\pm 1\}$ and $\sum_{i=1}^6\nu_i=1$.

Put $X=\{\gamma\in S_{1/2}|(\alpha|-\gamma)=0,(\gamma|\beta)=-1\}$.


\paragraph{{\bf The case (1)}}
Suppose $(\alpha|\beta)=1$.
We show eq.\ (\ref{eqn:1lambda0}).
The LHS of eq.\ (\ref{eqn:1lambda0}) is equal to $10$.
Therefore, we show
\[
\sharp\{\gamma\in S_{1/2}|(\alpha|-\gamma)=0,(\gamma|\beta)=-1\}=10.
\]
The all pairs $(\alpha,\beta)$ such that $\alpha,\beta\in S_{-1/2}$, $(\alpha|\beta)=1$ are given by the following pairs:
\[
(\mu \epsilon_i-\epsilon_k, \nu\epsilon_j-\epsilon_k),\quad \mu,\nu\in\{\pm 1\},i,j\in\{1,\ldots,6\},i\neq j,k\in \{7,8\};
\]
\[
(\mu\epsilon_i-\epsilon_7,\mu\epsilon_i-\epsilon_8),(\mu\epsilon_i-\epsilon_8,\mu\epsilon_i-\epsilon_7)\quad \mu\in\{\pm 1\},i\in\{1,\ldots,6\};
\]
\begin{eqnarray*}
&&\left(\mu \epsilon_s-\epsilon_t,\frac{1}{2}\sum_{i=1}^8\nu_i\epsilon_i\right),
\left(\frac{1}{2}\sum_{i=1}^8\nu_i\epsilon_i,\mu \epsilon_s-\epsilon_t\right),
\quad s\in\{1,\ldots,6\},\\
&&t\in\{7,8\},\mu,\nu_1,\ldots,\nu_8\in\{\pm1\}, \mu=\nu_s, \nu_7=\nu_8=-1,\sum_{i=1}^8\nu_i=1;
\end{eqnarray*}
\begin{eqnarray*}
&&\left(\frac{1}{2}\sum_{i=1}^8\nu_i\epsilon_i,\frac{1}{2}\left(-\sum_{i= j,k}\nu_i\epsilon_i+\sum_{i=1,\ldots,8,i\neq j,k}\nu_i\epsilon_i\right)\right),\\
&&\nu_1,\ldots,\nu_8\in \{\pm 1\},\nu_7=\nu_8=-1,\sum\nu_i=1,j,k\in\{1,\ldots,6\},j\neq k.
\end{eqnarray*}

By the action of the Weyl subgroup $W_{D_8}$, it suffices to consider $(\alpha,\beta)=(\epsilon_1-\epsilon_7,\epsilon_2-\epsilon_7)$, $(\epsilon_1-\epsilon_7,\epsilon_1-\epsilon_8)$,
 $(\epsilon_1-\epsilon_7,1/2(\sum_{i=1}^6\epsilon_i-\epsilon_7-\epsilon_8))$,
 $(-1/2\sum_{i=1}^8\epsilon_i,-1/2(-\epsilon_1-\epsilon_2+\sum_{i=3}^8\epsilon_i))$.

Set $\alpha=\epsilon_1-\epsilon_7$ and $\beta=\epsilon_2-\epsilon_7$.
Then, $X=\{\epsilon_1+\epsilon_7,-\epsilon_2+\epsilon_8,1/2(\epsilon_1-\epsilon_2+\sum_{i=3}^6\nu_i\epsilon_i+\epsilon_7+\epsilon_8)|\nu_3,\ldots,\nu_6\in \{\pm 1\},\sum_{i=3}^6 \nu_i=-1\}$.
Hence, $\sharp X=2+2^4/2=10$.

Set $\alpha=\epsilon_1-\epsilon_7$ and $\beta=\epsilon_1-\epsilon_8$.
Then, $X=\{\nu \epsilon_i+\epsilon_8|\nu \in \{\pm 1\},i=2,\ldots,6\}$.
Hence, $\sharp X=2\times 5=10$.

Set $\alpha=\epsilon_1-\epsilon_7$ and $\beta=1/2(\sum_{i=1}^6\epsilon_i-\epsilon_7-\epsilon_8)$.
Then, $X=\{-\epsilon_j+\epsilon_8,1/2(\epsilon_1+\epsilon_k-\sum_{i=2,\ldots,6,i\neq k}\epsilon_i+\epsilon_7+\epsilon_8)|j,k\in\{2,\:\ldots,6\}\}$.
Hence, $\sharp X=5+5=10$.

Set $\alpha=-1/2\sum_{i=1}^6\epsilon_i$ and $\beta=-1/2(-\epsilon_1-\epsilon_2+\sum_{i=3}^8\epsilon_i)$.
Then, $X=\{-\epsilon_s+\epsilon_t,1/2(-\sum_{i=1,\ldots,6,i\neq j,k}\epsilon_i+\sum_{i=j,k,7,8}\epsilon_i)|s\in\{1,2\}, t\in\{7,8\}, j,k\in\{2,\:\ldots,6\},j\neq k\}$.
Hence, $\sharp X=2\times 2+\begin{pmatrix}4\\ 2\end{pmatrix}=10$.

Thus, we have eq.\ (\ref{eqn:1lambda0}).

\paragraph{{\bf The case (2)}}
Suppose $(\alpha|\beta)=0$.
We show eq.\ (\ref{eqn:0lambda0}).
The LHS of eq.\ (\ref{eqn:0lambda0}) is $8\theta+4\alpha-4\beta$.
Therefore, we show $\sum_{\gamma\in X}\gamma=8\theta+4\alpha-4\beta$.

The all pairs $(\alpha,\beta)$ such that $\alpha,\beta\in S_{-1/2}$, $(\alpha|\beta)=0$ are given by the following pairs:
\[
(\mu \epsilon_i-\epsilon_k, -\mu\epsilon_i-\epsilon_k),\quad \mu\in\{\pm 1\},i\in\{1,\ldots,6\},k\in \{7,8\};
\]
\[
(\mu\epsilon_i-\epsilon_7,\nu\epsilon_j-\epsilon_8),(\nu\epsilon_j-\epsilon_8,\mu\epsilon_i-\epsilon_7)\quad 
\mu,\nu\in\{\pm 1\},i,j\in\{1,\ldots,6\},i\neq j;
\]
\begin{eqnarray*}
&&\left(-\nu_s \epsilon_s-\epsilon_t,\frac{1}{2}\sum_{i=1}^8\nu_i\epsilon_i\right),
\left(\frac{1}{2}\sum_{i=1}^8\nu_i\epsilon_i,-\nu_s \epsilon_s-\epsilon_t\right),
\quad s\in\{1,\ldots,6\},\\
&&t\in\{7,8\},\nu_1,\ldots,\nu_8\in\{\pm1\}, \nu_7=\nu_8=-1,\sum_{i=1}^8\nu_i=1;
\end{eqnarray*}
\begin{eqnarray*}
&&\left(\frac{1}{2}\sum_{i=1}^8\nu_i\epsilon_i,\frac{1}{2}\left(-\sum_{i=1,\ldots,6,i\neq j,k}\nu_i\epsilon_i+\sum_{i=j,k,7,8}\nu_i\epsilon_i\right)\right),\\
&&\nu_1,\ldots,\nu_8\in \{\pm 1\},\nu_7=\nu_8=-1,\sum\nu_i=1,j,k\in\{1,\ldots,6\},j\neq k.
\end{eqnarray*}

By the action of the Weyl subgroup $W_{D_8}$, it suffices to consider $(\alpha,\beta)=(\epsilon_1-\epsilon_7,-\epsilon_1-\epsilon_7)$, $(\epsilon_1-\epsilon_7,\epsilon_2-\epsilon_8)$, $(\epsilon_1-\epsilon_7,-1/2\sum_{i=1}^8\epsilon_i)$, $(-1/2\sum_{i=1}^8\epsilon_i,-1/2(-\sum_{i=1}^4\epsilon_i+\sum_{i=5}^8\epsilon_i))$.

Set $\alpha=\epsilon_1-\epsilon_7$ and $\beta=-\epsilon_1-\epsilon_7$.
Then, $X=\{1/2(\sum_{i=1,7,8}\epsilon_i+\sum_{i=2}^6\nu_i\epsilon_i)|\nu_2,\ldots,\nu_6\in \{\pm 1\},\sum_{i=2}^6 \nu_i=1\}$.
Hence,  $\sum_{\gamma\in X}\gamma=8(\epsilon_1+\epsilon_7+\epsilon_8)=8\theta+4\alpha-4\beta$.

Set $\alpha=\epsilon_1-\epsilon_7$ and $\beta=\epsilon_2-\epsilon_8$.
Then, $X=\{\mu \epsilon_k+\epsilon_8,1/2(\epsilon_1-\epsilon_2+\sum_{i=3}^6\nu_i\epsilon_i+\epsilon_7+\epsilon_8|\mu,\nu_3,\ldots,\nu_6 \in \{\pm 1\},k\in\{3,\ldots,6\},\sum_{i=1}^6\nu_i=-1\}$.
Hence, $\sum_{\gamma\in X}\gamma=12\epsilon_8+8\epsilon_7+4(\epsilon_1-\epsilon_2)=8\theta+4\alpha-4\beta$.

Set $\alpha=\epsilon_1-\epsilon_7$ and $\beta=-1/2\sum_{i=1}^8\epsilon_i$.
Then, $X=\{\epsilon_1+\epsilon_7,\epsilon_k+\epsilon_8,1/2(\sum_{i=1,\ldots,8,i\neq s,t}\epsilon_i-\sum_{i=s,t}\epsilon_i)|k,s,t\in\{2,\:\ldots,6\},s\neq t\}$.
Hence, $\sum_{\gamma\in X}\gamma=8\theta+4\alpha-4\beta$.

Set $\alpha=-1/2\sum_{i=1}^6\epsilon_i$ and $\beta=-1/2(-\sum_{i=1}^4\epsilon_i+\sum_{i=5}^8\epsilon_i)$.
Then, $X=\{-\epsilon_s+\epsilon_t,1/2(\sum_{i=j,k,7,8}\epsilon_i-\sum_{i=1,\ldots,6,i\neq j,k}\epsilon_i)|s,j\in\{1,\ldots,4\}, t\in\{7,8\}, k\in\{5,6\}\}$.
Hence, $\sum_{\gamma\in X}\gamma=8\theta+4\alpha-4\beta$.

Thus, we have  eq.\ (\ref{eqn:0lambda0}).

\paragraph{{\bf The case (3)}}
Let $\alpha$ be an element of $S_{-1/2}$.
We show eq.\ (\ref{eqn:-1lambda0}).
Note that $X=\{\gamma\in S_{1/2}|(\alpha|\gamma)=0\}$.
By the action of the Weyl subgroup $W_{D_8}$, it suffices to consider $\alpha=-\alpha_8, -\alpha_1-\sum_{i=3}^8\alpha_i$.
We have
\begin{eqnarray*}
\omega^\natural&=&\frac{1}{2(1+h^{\vee,\natural})}\left(\sum_{\mu\in \Phi(\mathfrak{g}^\natural)}\frac{1}{2}(\mu(-1)^2+\mu(-2))+\sum_{i=1}^7\alpha_i(-1)\varpi^\natural_i(-1)\right)|0\rangle\\
&=&\frac{1}{4}\bigl(4 \alpha_1(-1)^2 + 8 \alpha_1(-1) \alpha_2(-1) + 7 \alpha_2(-1)^2 + 12 \alpha_1(-1) \alpha_3(-1)\\
&& + 16 \alpha_2(-1) \alpha_3(-1) + 
 12 \alpha_3(-1)^2 + 16 \alpha_1(-1) \alpha_4(-1)\\
&& + 24 \alpha_2(-1) \alpha_4(-1) + 32 \alpha_3(-1) \alpha_4(-1) + 24 \alpha_4(-1)^2 \\
&&+ 12 \alpha_1(-1) \alpha_5(-1) + 18 \alpha_2(-1) \alpha_5(-1) + 24 \alpha_3(-1) \alpha_5(-1) \\
&&+ 36 \alpha_4(-1) \alpha_5(-1) + 
 15 \alpha_5(-1)^2 + 8 \alpha_1(-1) \alpha_6(-1) \\
&&+ 12 \alpha_2(-1) \alpha_6(-1) + 16 \alpha_3(-1) \alpha_6(-1) + 
 24 \alpha_4(-1) \alpha_6(-1) \\
&&+ 20 \alpha_5(-1) \alpha_6(-1) + 8 \alpha_6(-1)^2 + 4 \alpha_1(-1) \alpha_7(-1)\\
&& + 6 \alpha_2(-1) \alpha_7(-1) + 
 8 \alpha_3(-1) \alpha_7(-1) + 12 \alpha_4(-1) \alpha_7(-1)\\
&& + 10 \alpha_5(-1) \alpha_7(-1) + 8 \alpha_6(-1) \alpha_7(-1) + 3 \alpha_7(-1)^2\bigr)|0\rangle.
\end{eqnarray*}
Here, $\Phi(\mathfrak{g}^\natural)=\{\gamma\in \Phi(\mathfrak{g})|(\theta|\gamma)=0\}$, and $\varpi^\natural_1,\ldots,\varpi^\natural_7$ are  the fundamental weights for the simple roots $\alpha_1,\ldots,\alpha_7$ of $\mathfrak{g}^\natural=E_7$.
We have
\[
\theta=2\alpha_1+3\alpha_2+4\alpha_3+6\alpha_4+5\alpha_5+4\alpha_6+3\alpha_7+2\alpha_8.
\]
Put $-X+\theta:=\{-\gamma+\theta|\gamma\in X\}$.
Then, $-X+\theta=\{\gamma\in S_{1/2}|(\gamma|\alpha)=1\}$.

Set $\alpha=-\alpha_8=\epsilon_6-\epsilon_7$.
Then, the set $-X+\theta$ consists of the
following elements:
\[
-\epsilon_6+\epsilon_8,\mu\varepsilon_k+\epsilon_7,\quad k\in\{1,\ldots,5\},\mu\in\{\pm 1\};
\]
\[
\frac{1}{2}\left(\sum_{i=1}^5\nu_i\epsilon_i-\epsilon_6+\epsilon_7+\epsilon_8\right),\quad \nu_1,\ldots,\nu_5\in\{\pm 1\},\sum_{i=1}^5\nu_i=-1.
\]
Note that the latter elements are all elements $\gamma\in S_{1/2}$ such that $(\gamma|\varpi_1)=(\gamma|\varpi_7)=(\gamma|\varpi_8)=1$.

Therefore, the set $-X+\theta$ consists of the following elements:
\[
2\alpha_1+2\alpha_2+3\alpha_3+4\alpha_4+3\alpha_5+2\alpha_6+\alpha_7+\alpha_8,
\quad\alpha_2+\sum_{k=4}^8\alpha_k,
\]
\[
\sum_{k=i+2}^8\alpha_k,\ \alpha_2+\alpha_3+2\sum_{k=4}^{j+1}\alpha_k+\sum_{k=j+2}^8\alpha_k,\quad i\in\{1,\ldots,5\},j\in\{2,\ldots,5\};
\]
\begin{eqnarray*}
(1, 1, 1, 2, 1, 1, 1, 1),
(1, 1, 2, 2, 1, 1, 1, 1),
(1, 1, 1, 2, 2, 1, 1, 1),\\
(1, 1, 2, 2, 2, 1, 1, 1),
(1, 1, 1, 2, 2, 2, 1, 1),
(1, 1, 2, 3, 2, 1, 1, 1),\\
(1, 1, 2, 2, 2, 2, 1, 1),
(1, 2, 2, 3, 2, 1, 1, 1),
(1, 1, 2, 3, 2, 2, 1, 1),\\
(1, 2, 2, 3, 2, 2, 1, 1),
(1, 1, 2, 3, 3, 2, 1, 1),
(1, 2, 2, 3, 3, 2, 1, 1),\\
(1, 2, 2, 4, 3, 2, 1, 1),
(1, 2, 3, 4, 3, 2, 1, 1),
(1, 0, 1, 1, 1, 1, 1, 1),\\
(1, 1, 1, 1, 1, 1, 1, 1).
\end{eqnarray*}
Here, the symbol $(n_1,\ldots,n_8)$ denotes $\sum_{k=1}^8n_k\alpha_k$.
Then, we see that the RHS of eq.\ (\ref{eqn:-1lambda0}) equals $-5((\alpha+\theta/2)(-1)^2+(\alpha+\theta/2)(-2))|0\rangle$, which coincides with the LHS.

Set $\alpha=-\alpha_1-\sum_{i=3}^8\alpha_i=1/2\left(\sum_{i=1}^6\epsilon_i-\sum_{i=7}^8\epsilon_i\right)$.
Then, the set $-X+\theta$ consists of the
following elements:
\[
-\epsilon_s+\epsilon_t,\quad s\in\{1,\ldots,6\},t\in\{7,8\};
\]
\[
\frac{1}{2}\left(\sum_{i=j,k,7,8}\nu_i\epsilon_i-\sum_{i=1,\ldots,6,i\neq j,k}\epsilon_i\right),\quad j,k\in \{1,\ldots,6\},j\neq k.
\]
Note that the latter elements are all elements $\gamma\in S_{1/2}$ such that $(\gamma|\varpi_2)=(\gamma|\varpi_8)=1$, $(\gamma|\varpi_1)\geq 1$.

Therefore, the set $-X+\theta$ consists of the following elements:
\[
\sum_{k=i+2}^8\alpha_k,\quad i\in\{1,\ldots,6\},
\]
\[
2\alpha_1+2\alpha_2+3\alpha_3+5\alpha_4+4\alpha_5+3\alpha_6+2\alpha_7+\alpha_8,
\]
\[
2\alpha_1+2\alpha_2+4\alpha_3+5\alpha_4+4\alpha_5+3\alpha_6+2\alpha_7+\alpha_8,
\]
\[
\sum_{k=i+2}^8\alpha_k,\alpha_2+\alpha_3+2\sum_{k=4}^{j+1}\alpha_k+\sum_{k=j+2}^8\alpha_k,\quad i\in\{1,\ldots,5\},j\in\{2,\ldots,5\};
\]
\begin{eqnarray*}
 (1, 1, 1, 2, 1, 1, 1, 1), 
 (1, 1, 2, 2, 1, 1, 1, 1), 
 (1, 1, 1, 2, 2, 1, 1, 1),\\ 
 (1, 1, 2, 2, 2, 1, 1, 1), 
 (1, 1, 1, 2, 2, 2, 1, 1), 
 (1, 1, 2, 3, 2, 1, 1, 1), \\
 (1, 1, 2, 2, 2, 2, 1, 1), 
 (1, 1, 2, 3, 2, 2, 1, 1), 
 (1, 1, 2, 3, 3, 2, 1, 1), \\
 (1, 1, 1, 1, 1, 1, 1, 1), 
 (1, 1, 1, 2, 2, 2, 2, 1), 
 (1, 1, 2, 2, 2, 2, 2, 1), \\
 (1, 1, 2, 3, 2, 2, 2, 1), 
 (1, 1, 2, 3, 3, 2, 2, 1), 
 (1, 1, 2, 3, 3, 3, 2, 1).
\end{eqnarray*}
Here, $(n_1,\ldots,n_8)$ denotes $\sum_{k=1}^8n_k\alpha_k$.
Then, we see that the RHS of eq.\ (\ref{eqn:-1lambda0}) equals $-5((\alpha+\theta/2)(-1)^2+(\alpha+\theta/2)(-2))|0\rangle$, which coincides with the LHS.

We see that for $\alpha=-\alpha_8=\epsilon_6-\epsilon_7$ and $\alpha=-\alpha_1-\sum_{i=2}^8\alpha_i=1/2(\sum_{i=1}^6\epsilon_i-\sum_{i=7,8}\epsilon_i)$, the both hand sides of eq.\ (\ref{eqn:-1lambda0}) equals $-5((\alpha+\theta/2)(-1)^2+(\alpha+\theta/2)(-2))|0\rangle$.
Hence, we have eq.\ (\ref{eqn:-1lambda0}).

Thus, we have Lemma \ref{sec:lemeqn2}.

\section{Remarks}
\begin{rem}
Let $\mathfrak{g}$ be a Deligne exceptional Lie algebra not of type $A_1$ with $k=-h^\vee/6$ or $C_2$ with $k=1/2$.
Set $W=\mathcal{W}_k(\mathfrak{g},f_\theta)$.
Consider the group $\mathbb{Z}_2=\{\mathrm{id},\iota\}$ as in Theorem \ref{sec:mainthm}.

We call the $\mathrm{id}$-twisted modules  the {\it $\iota$-weight twisted modules} and 
 the $\iota$-twisted modules  the {\it $\mathrm{id}$-weight twisted modules}.
The $\mathrm{id}$-weight twisted modules are usually called the {\it Ramond twisted modules}.
We call the usual modules ($=\mathrm{id}$-twisted modules $=\mathrm{\iota}$-weight twisted modules)
the {\it Neveu-Schwarz twisted modules}.

Note that the (twisted) modules of $W$ have compatible actions of $\mathbb{Z}_2$.
We fix an action of $\mathbb{Z}_2$ for each (twisted) module.

Let $v$ be an element of $W$.
Let $g,h$ be elements of $\mathbb{Z}_2$.
Let  $M$ be a $g$-weight twisted module.
Define the {\it $1$-point correlation function} associated with $g,h$ to be 
\[
S^h_M(v,\tau):=\mathrm{tr}_M \left(o(v)\circ hq^{L_0-c/24}\right),
\]
where $\tau$ is a point on the complex upper-half plane.
Here, $o(v)$ denotes the {\it zero-mode} of $v$ defined by linearly extending
the assignment $o(u)=u(\mathrm{wt}(u)-1)$ for each conformal weight-homogeneous $u\in W$,
$L_0$ the zero-mode of the conformal vector $\omega\in W$, and $c=c_\mathcal{W}$ the central charge of $W$.

Define the {\it conformal block} associated with $g,h$ to be
\[
C(g,h;v):=\langle S_M^h(v,\tau)|M\ \mbox{is a}\ g\mbox{-weight twisted module}\rangle_\mathbb{C}.
\]

Then, by the result of \cite{DLM,E}, we have the following corollary.

\begin{cor}
Let $v$ be an element of $W$ and $g,h$ elements of $\mathbb{Z}_2$.
Then, the elements $\begin{pmatrix} a &b\\c&d\end{pmatrix}\in SL_2(\mathbb{Z})$ with the usual action induces the transformation
\[
\begin{pmatrix} a &b\\c&d\end{pmatrix}:C(g,h;v)\rightarrow C(g^ah^c,g^bh^d;v).
\]
\end{cor}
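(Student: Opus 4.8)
The plan is to obtain the corollary as a direct application of the orbifold modular-invariance theorem of Dong--Li--Mason \cite{DLM}, in the form extended to non-integrally graded vertex operator algebras of CFT-type by van Ekeren \cite{E}. The first step is therefore to check that the pair $(W,\mathbb{Z}_2)$, with $W=\mathcal{W}_k(\mathfrak{g},f_\theta)$ and $\mathbb{Z}_2=\{\mathrm{id},\iota\}$, satisfies the hypotheses of that theorem. By Theorem \ref{sec:mainthm}, $W$ is a simple, $C_2$-cofinite, $\mathbb{Z}_2$-rational vertex operator algebra; it is $\tfrac12\mathbb{Z}_+$-graded with $W_0=\mathbb{C}|0\rangle$ (hence of CFT-type), the integer-weight part $V\otimes U$ is fixed by $\iota$ and the half-integer-weight part $N\otimes M$ is negated by $\iota$, so that $\iota=(-1)^{2L_0}$ is the canonical grading automorphism and in particular an automorphism of finite order preserving $\omega$. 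Under this identification the $g$-weight twisted modules in the sense of the preceding Remark correspond to the twisted modules considered in \cite{DLM,E}, and by $\mathbb{Z}_2$-rationality there are, for each $g\in\mathbb{Z}_2$, only finitely many inequivalent irreducible ones; thus each conformal block $C(g,h;v)$ is a finite-dimensional space of holomorphic functions on the upper half-plane.

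With the hypotheses in place, I would invoke the theorem of \cite{DLM,E}, which asserts exactly that the assignment $(g,h)\mapsto C(g,h;v)$ is covariant for the standard action of $SL_2(\mathbb{Z})$ on commuting pairs of elements of $\mathbb{Z}_2$: for $\gamma=\left(\begin{smallmatrix}a&b\\c&d\end{smallmatrix}\right)\in SL_2(\mathbb{Z})$ acting on $\tau$ by $\tau\mapsto(a\tau+b)/(c\tau+d)$ and on $v$ through the usual weight-$\mathrm{wt}(v)$ automorphy factor, one has $S^{h}_M(v,\gamma\tau)\in C(g^ah^c,g^bh^d;v)$ for every $g$-weight twisted module $M$. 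Because $\mathbb{Z}_2$ is abelian and every element is its own inverse, the exponents are unambiguous, and the general formula follows by composing the cases of the generators $S=\left(\begin{smallmatrix}0&-1\\1&0\end{smallmatrix}\right)$ and $T=\left(\begin{smallmatrix}1&1\\0&1\end{smallmatrix}\right)$: the $S$-transformation $\tau\mapsto-1/\tau$ interchanges the twist and insertion labels, giving $C(g,h;v)\to C(h,g;v)$, while the $T$-transformation $\tau\mapsto\tau+1$ fixes the twist label and sends the insertion $h$ to $gh$ (the half-integral powers of $q$ in $S^h_M$ being accounted for by the $q^{L_0-c/24}$ prefactor together with the definition of the weight-twisted sectors), giving $C(g,h;v)\to C(g,gh;v)$.

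The part that requires care---rather than a genuine obstacle---is the bookkeeping of conventions: one must match the present ``$g$-weight twisted module'' terminology, in which the Ramond sector is the $\mathrm{id}$-weight (i.e.\ $\iota$-twisted) sector, with the conventions under which \cite{DLM,E} phrase the $SL_2(\mathbb{Z})$-action, so that the induced action on pairs $(g,h)$ is literally the standard one appearing in the statement; once the identification $\iota=(-1)^{2L_0}$ is fixed this matching is purely formal. A second point worth noting is that the original Dong--Li--Mason argument is formulated for integrally graded rational VOAs, so the extension of \cite{E} to $\tfrac12\mathbb{Z}$-graded (more generally $\mathbb{Q}_+$-graded) VOAs of CFT-type is genuinely needed here; no new analytic input beyond verifying its hypotheses is required, and those hypotheses are supplied by Theorem \ref{sec:mainthm}.
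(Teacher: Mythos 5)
Your proposal is correct and takes essentially the same route as the paper, which obtains the corollary directly from the modular-invariance theorems of \cite{DLM,E} once Theorem \ref{sec:mainthm} supplies simplicity, $C_2$-cofiniteness and $\mathbb{Z}_2$-rationality of $W$. Your explicit checks (CFT-type, $\iota=(-1)^{2L_0}$ as the canonical order-two automorphism, and the matching of the ``weight-twisted'' labels with the conventions of \cite{DLM,E}) merely spell out what the paper leaves implicit.
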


Thus, we have the modular invariance of the (twisted) modules of $W$.
In particular, we have the modular invariance $\begin{pmatrix} a &b\\c&d\end{pmatrix}:C(\mathrm{id},\mathrm{id};v)\rightarrow C(\mathrm{id},\mathrm{id};v)$
of the $1$-point correlation functions of the Ramond twisted representations.
\end{rem}

\begin{rem}
Suppose $\mathfrak{g}=E_8$.
Then, the Neveu-Schwarz twisted irreducible modules of $W$ are explicitly given by the following:
\[
M_0=V_1(E_7)\otimes L(-3/5,0)\oplus V_1(E_7;\varpi_7)\otimes L(-3/5,3/4),
\]
\[
M_1=V_1(E_7)\otimes L(-3/5,1/5)\oplus V_1(E_7;\varpi_7)\otimes L(-3/5,-1/20).
\]
Note that $M_0$ is the adjoint module of $W$.

The Ramond twisted irreducible modules are explicitly given by the following:
\[
M_2=V_1(E_7)\otimes L(-3/5,-1/20)\oplus V_1(E_7;\varpi_7)\otimes L(-3/5,1/5),
\]
\[
M_3=V_1(E_7)\otimes L(-3/5,4/3)\oplus V_1(E_7;\varpi_7)\otimes L(-3/5,0).
\]
By the above result,
the characters $\chi_2(\tau)=S_{M_2}^{\mathrm{id}}(|0\rangle;\tau)$ and $\chi_3(\tau)=S_{M_3}^{\mathrm{id}}(|0\rangle;\tau)$
span a $SL_2(\mathbb{Z})$-invariant vector space $C(\mathrm{id},\mathrm{id},|0\rangle)$.
Note that $\chi_2,\chi_3$ coincide with the characters of the {\it intermediate vertex subalgebra} $V_{E_{7+1/2}}$ and its module $V_{E_{7+1/2}+\alpha_1}$ \cite{Kaw1}.
By the result of \cite{Kaw1}, the characters $\chi_2$, $\chi_3$ form a basis of the solutions of the modular differential equation (\ref{eq:diff1}) with $\mu=-551/900$, which was the ``hole'' of the $2$-character rational conformal field theories
observed in \cite{MMS}.
See also \cite{T}.
Note that similar ``hole'' were observed in the study of the Deligne dimension formulas \cite{CdM,D} and filled in by using the
{\it intermediate Lie algebra} $E_{7+1/2}$ \cite{LaM3,W}.
\end{rem}

\begin{rem}
Let $\mathfrak{g}$ be a simple Lie algebra with the {\it Vogel parameter} $(\alpha,\beta,\gamma)\in \mathbb{P}_2$ \cite{V,LaM2,MV}.
Consider the {\it extended Vogel parameter} $(\alpha,\beta,\gamma,\kappa)\in\mathbb{P}_3$ \cite{MV}, and set
 $\kappa=-(\alpha+\beta)/2$.
Normalize the parameters as $\alpha=-2$ and set $k=\kappa$.
When $\mathfrak{g}$ is a Deligne exceptional Lie algebra, we have $k=-h^\vee/6$, and
when $\mathfrak{g}$ is of type $C_l$, we have $k=1/2$.
The branching rules of $\mathcal{W}_k(\mathfrak{g},f_\theta)$ for the other simple Lie algebras $\mathfrak{g}$ with $k=-(\alpha+\beta)/2$ will be considered in the forthcoming paper.
\end{rem}

\section{Appendix A. The abelian intertwining algebras, generalized vertex algebras and quasi generalized vertex algebras}
\label{sec:appenda}
\subsection{Abelian $3$-cocycles}

Let $Q$ be an abelian group.
Recall the {\it Eilenberg-Mac Lane abelian cohomology} \cite{EM}.
We use the $3$-cocycles of the cohomology.
Let $F:Q\times Q\times Q\rightarrow \mathbb{C}^\times$ and $\Omega:Q\times Q\rightarrow \mathbb{C}^\times$ be arbitrary functions.

\begin{dfn}
$(F,\Omega)$ is a {\it normalized abelian $3$-cocycle} (NA3) if
\begin{description}
\item [(A1)] $F(i,j,k)F(i,j,k+l)^{-1}F(i,j+k,l)F(i+j,k,l)^{-1}F(j,k,l)=1$,
\item [(A2)] $F(i,j,k)^{-1}\Omega(i,j+k)F(j,k,i)^{-1}=\Omega(i,j)F(j,i,k)^{-1}\Omega(i,k)$,
\item [(A3)] $F(i,j,k)\Omega(i+j,k)F(k,i,j)=\Omega(j,k)F(i,k,j)\Omega(i,k)$,
\item[(A4)] $F(i,j,0)=F(i,0,j)=F(0,i,j)=1$,
\item[(A5)] $\Omega(i,0)=\Omega(0,i)=1$,
\end{description}
for $i,j,k,l\in Q$.
\end{dfn}

Define $B:Q\times Q\times Q\rightarrow\mathbb{C}^\times$ to be
\[
B(i,j,k)=F(j,i,k)^{-1}\Omega(i,j)F(i,j,k)
\]
($i,j,k\in Q$).
Let $q:Q\rightarrow \mathbb{C}^\times$ ($q(i)=\Omega(i,i)$) denote the associated quadratic form \cite{EM}, and
 $\Delta:Q\times Q\rightarrow \mathbb{C}/\mathbb{Z}$ the unique symmetric bilinear map
such that
\[
e^{-2\pi i \Delta(i,j)}=q(i+j)q(i)^{-1}q(j)^{-1}=\Omega(i,j)\Omega(j,i).
\]

\begin{rem}\label{sec:lem1}
The map
\begin{eqnarray*}
f:\{\mbox{Normalized abelian 3-cocycles}\}&\rightarrow& \mathrm{Map}(Q\times Q\times Q,\mathbb{C}^\times),\\
 (F,\Omega)&\mapsto& B
\end{eqnarray*} 
is injective.
(This remark is shown in Section \ref{sec:computational}).

\subsection{Fields, locality and abelian intertwining algebras}

Let $U$ be a vector space.
We identify the subsets of $\mathbb{C}/\mathbb{Z}$ and the $\mathbb{Z}$-invariant subsets of $\mathbb{C}$.
Let $n$ be a positive integer.
Let $\Gamma(1),\ldots,\Gamma(n)$ be subsets of $\mathbb{C}/\mathbb{Z}$.
We denote by $U[[z_1,z_1^{\Gamma(1)},\ldots,z_n,z_n^{\Gamma(n)}]]$ the space of all formal infinite series 
\[
\sum_{k(1)\in \Gamma(1),\ldots,k(n)\in \Gamma(n)}f_{(k(1),\ldots,k(n))} z_1^{k(1)}\ldots z_n^{k(n)}
\]
 with $f_{(k(1),\ldots,k(n))}\in U$.
We denote by $U[[z_1,\ldots,z_n]]z_1^{\Gamma(1)}\cdots z_n^{\Gamma(n)}$ the space of all finite sums of the form 
\[
\sum_i \psi_i(z_1,\ldots,z_n)z_1^{d(1)_i}\cdots z_n^{d(n)_i}
\]
 with $d(1)_i\in\Gamma(1),\ldots,d(n)_i\in\Gamma(n)$ and
$\psi_i(z_1,\ldots,z_n)\in U[[z_1,\ldots,z_n]]$.

For $N\in \mathbb{C}$, we define the formal expansions
\begin{eqnarray*}
\iota_{z,w} (z+e^{-\pi i}w)^N &:=& e^{-w \partial_{z}} z^N \\
&=& \sum_{j\in\mathbb{Z}_+} \begin{pmatrix}N\\ j\end{pmatrix} z^{N-j} (-w)^j
 \in (\mathbb{C}[[z]]z^{N+\mathbb{Z}})[[w]],
\end{eqnarray*}
and
\begin{eqnarray*}
\iota_{w,z} (z+e^{\pi i}w)^N &:=& e^{\pi i N}e^{-z \partial_{w}} w^N \\
&=& e^{\pi i N}\sum_{j\in\mathbb{Z}_+} \begin{pmatrix}N\\ j\end{pmatrix} w^{N-j} (-z)^j
 \in (\mathbb{C}[[w]]w^{N+\mathbb{Z}})[[z]].
\end{eqnarray*}
We denote $\iota_{z,w}(z-w)^N=\iota_{z,w}(z+e^{-\pi i}w)^N$ and
$\iota_{w,z}(z-w)^N=\iota_{w,z}(z+e^{\pi i}w)^N$.
By using this, we define
\[
\iota_{z,w}:U[[z,w]]z^{\Gamma_1} w^{\Gamma_2} (z+e^{-\pi i}w)^N\rightarrow (U[[z]]z^{N+\Gamma_1})[[w]]w^{\Gamma_2},
\]
\[
\iota_{w,z}:U[[z,w]]z^{\Gamma_1} w^{\Gamma_2} (z+e^{\pi i}w)^N\rightarrow (U[[w]]w^{N+\Gamma_2})[[z]]z^{\Gamma_1},
\]
for $\Gamma_1,\Gamma_2\subset\mathbb{C}$ and $N\in \mathbb{C}$.

Let $Q$ be an abelian group and $(F,\Omega)$ a normalized abelian $3$-cocycle with maps $B$ and $\Delta$.
Let $V=\bigoplus_{\alpha\in Q}V^\alpha$ be a $Q$-graded vector space.

\begin{dfn}
 A {\it field} with {\it charge} $\alpha\in Q$ on $V$
is a formal series $a(z)\in (\mathrm{End} (V)) [[z,z^{\mathbb{C}}]]$
with the property that
\[
a(z)b\in V^{\alpha+\beta}[[z]]z^{-\Delta(\alpha+\beta)}
\]
for $b\in V^\beta$.
\end{dfn}

Let $a(z)$ and $b(z)$ be fields with charges $\alpha\in Q$ and $\beta\in Q$.

\begin{dfn}
The pair $(a,b)$ of fields
is called {\it local} if there exists $N\in \Delta(\alpha,\beta)$ such that for any $\gamma\in Q$ and $c\in V^\gamma$,
\begin{eqnarray*}
\iota_{z,w}(z-w)^N a(z)b(w)c
=B(\alpha,\beta,\gamma)
 \iota_{w,z}(z-w)^N b(w)a(z)c.
\end{eqnarray*}
We call such an $N$ a {\it locality bound} of $(a,b)$.
\end{dfn}

Now we define the abelian intertwining algebras \cite{DL} by using the locality axiom following the generalization of the generalized vertex algebras in \cite{BK}.

Let $V$ be a vector space, $|0\rangle \in V$ a non-zero vector, $\partial:V\rightarrow V$ an endomorphism and 
\[
Y:V\rightarrow \mathrm{End}(V)[[z,z^\mathbb{C}]],\quad a\mapsto Y(a,z)=\sum_{n\in \mathbb{C}} a{(n)} z^{-n-1}
\]
 a linear map.

\begin{dfn}
The quadruple $(V,Y,|0\rangle,\partial)$ is called an {\it abelian intertwining algebra} ({\it AIA})
if
\begin{enumerate}
\item {\it (vacuum axiom)} $\partial|0\rangle =0$, $Y(|0\rangle,z)=\mathrm{id}_V$,
$Y(a,z)|0\rangle\in V[[z]]$ and $Y(a,z)|0\rangle|_{z=0}=a$ ($a\in V$);
\item {\it (translation covariance)} $[\partial,Y(a,z)]=\partial_z Y(a,z)$ ($a\in V$).
\item {\it (field and locality axiom)} there exist 
\begin{enumerate}
\item an abelian group $Q$;
\item a normalized abelian $3$-cocycle $(F,\Omega)$ with the maps $B,\Delta$;
\item a $Q$-grading $V=\bigoplus_{\alpha\in Q}V^\alpha$ on $V$,
\end{enumerate}
 such that
\begin{enumerate}
\item[(d)] the vector $|0\rangle$ belongs to $V^0$;
\item[(e)] the operator $\partial$ is $Q$-grading preserving;
\item[(f)] for $\alpha\in Q$ and $a\in V^\alpha$, the formal series $Y(a,z)$ is a field with charge $\alpha$.
\item[(g)] the pair of fields $(Y(a,z),Y(b,w))$ is local ($\alpha,\beta\in Q$, $a\in V^\alpha$, $b\in V^\beta$).
\end{enumerate}
\end{enumerate}
\end{dfn}

We denote the AIAs by $(V,Y,|0\rangle,\partial)$ or $V$.
The linear map $Y$ is called the {\it state-field correspondence} or {\it vertex operator},
vector $|0\rangle$ the {\it vacuum vector}, and
operator $\partial$ the {\it translation operator} or {\it derivative}.

Let $(V,Y,|0\rangle,\partial)$ be an AIA with an abelian group $Q$, a $Q$-grading $V=\bigoplus_{\alpha\in Q}V^\alpha$ on $V$ and a normalized abelian $3$-cocycle $(F,\Omega)$ satisfying the axioms.
We call the pair $((V,Y,|0\rangle,\partial),(Q,F,\Omega))$ with the grading $V=\bigoplus_{\alpha\in Q}V^\alpha$ a {\it $Q$-charged abelian intertwining algebra ($Q$-charged AIA)}.
We denote it by $(V,(Q,F,\Omega))$ or $V$.

Let $(V,Y,|0\rangle,\partial)$ be an AIA.
We denote the transposed operator of $Y$ by $Y:V\times V\rightarrow V[[z]]z^\mathbb{C}$, $(u,v)\mapsto Y(u,z)v$.
Note that the operator induces $Y:V^\alpha\times V^\beta\rightarrow V^{\alpha+\beta}[[z]]z^{-\Delta(\alpha,\beta)}$
for $\alpha,\beta\in Q$.
Let $V$ be a $Q$-charged AIA.
For $X\subset Q$, we denote $V^X:=\bigoplus_{\alpha\in X}V^\alpha$.

Let $(V_1,Y_1,|0\rangle_1,\partial_1),(V_2,Y_2,|0\rangle_2,\partial_2)$ be AIAs.

\begin{dfn}
A {\it homomorphism} of AIAs is a linear map $f:V_1\rightarrow V_2$
which satisfies
\begin{enumerate}
\item $f(Y_1(a,z)b)=Y_2(f(a),z)f(b)$ ($a,b\in V_1$);
\item $f(|0\rangle_1)=|0\rangle_2$;
\item $f\circ \partial_1=\partial_2\circ f$.
\end{enumerate}
\end{dfn}

General theorems for the AIAs are given in Section \ref{sec:aia}.

\subsection{Vertex algebras, generalized vertex algebras and quasi generalized vertex algebras}

We consider three subclasses of the AIAs.
Let $V$ be a vector space, $|0\rangle \in V$ a non-zero vector, $\partial:V\rightarrow V$ an endomorphism, and 
\[
Y:V\rightarrow \mathrm{End}(V)[[z,z^\mathbb{C}]],\quad a\mapsto Y(a,z)=\sum_{n\in \mathbb{C}} a{(n)} z^{-n-1}
\]
 a linear map.

\begin{dfn}
The quadruple $(V,Y,|0\rangle,\partial)$ is called a {\it vertex algebra} if 
\begin{enumerate}
\item $Y(|0\rangle,z)=\mathrm{id}_V$, $Y(a,z)|0\rangle\in V[[z]]$, $a=\mathrm{Res}_{z=0} Y(a,z)|0\rangle$;
\item $\partial|0\rangle=0$, $[\partial,Y(a,z)]=\partial_zY(a,z)$ ($a\in V$);
\item  $Y(a,z)b\in V[[z]]z^{\mathbb{Z}}(=V((z)))$ ($a,b\in V$);
\item for any $a,b\in V$, there exists $N\in \mathbb{Z}$ such that 
\[
\iota_{z,w}(z-w)^N Y(a,z)Y(b,w)=\iota_{w,z}(z-w)^N Y(b,w)Y(a,z).
\]
\end{enumerate}
\end{dfn}

\begin{dfn}\cite{DL,BK}
The quadruple $(V,Y,|0\rangle,\partial)$ is called a {\it generalized vertex algebra (GVA)} if 
\begin{enumerate}
\item $Y(|0\rangle,z)=\mathrm{id}_V$, $Y(a,z)|0\rangle\in V[[z]]$, $a=\mathrm{Res}_{z=0} Y(a,z)|0\rangle$;
\item $\partial|0\rangle=0$, $[\partial,Y(a,z)]=\partial_zY(a,z)$;
\item there exist an abelian group $Q$, a $Q$-grading $V=\bigoplus_{\alpha\in Q}V^\alpha$ and  a function $\eta:Q\times Q\rightarrow\mathbb{C}^\times$
such that for any $\alpha,\beta\in Q$, $a\in V^\alpha$, $b\in V^\beta$,
\begin{enumerate}
\item $\eta$ is bimultiplicative;
\item $Y(a,z)b\in V^{\alpha+\beta}[[z]]z^{-\Delta(\alpha,\beta)}$;
\item there exists $N\in \Delta(\alpha,\beta)$ such that 
\[
\iota_{z,w}(z-w)^N Y(a,z)Y(b,w)=\eta(\alpha,\beta)\iota_{w,z}(z-w)^N Y(b,w)Y(a,z);
\]
\item $|0\rangle\in V^0$, $\partial(V^\alpha)\subset V^\alpha$.
\end{enumerate}
Here, $\Delta:Q\times Q\rightarrow \mathbb{C}/\mathbb{Z}$ is the bilinear map defined to be
$e^{-2\pi i\Delta(\alpha,\beta)}=\eta(\alpha,\beta)\eta(\beta,\alpha)$.
\end{enumerate}
\end{dfn}

\begin{dfn}
The quadruple $(V,Y,|0\rangle,\partial)$ is a {\it quasi generalized vertex algebra (quasi-GVA)} if

\begin{enumerate}
\item $Y(|0\rangle,z)=\mathrm{id}_V$, $Y(a,z)|0\rangle\in V[[z]]$, $a=\mathrm{Res}_{z=0} Y(a,z)|0\rangle$;
\item $\partial|0\rangle=0$, $[\partial,Y(a,z)]=\partial_zY(a,z)$;
\item there exist an abelian group $Q$, a $Q$-grading $V=\bigoplus_{\alpha\in Q}V^\alpha$
and a function $\eta:Q\times Q\rightarrow\mathbb{C}^\times$
such that for any $\alpha,\beta\in Q$, $a\in V^\alpha$, $b\in V^\beta$,
\begin{enumerate}
\item $\mu_\eta(\alpha,\beta,\gamma):=\eta(\alpha,\beta,\gamma)^{-1}\eta(\alpha,\gamma)\eta(\beta,\gamma)$ is multiplicative in $\gamma$,
\item there exists the bilinear map $\Delta:Q\times Q\rightarrow \mathbb{C}/\mathbb{Z}$ such that $\eta(\alpha,\beta)\eta(\beta,\alpha)=e^{-2\pi i \Delta(\alpha,\beta)}$,
\item $\eta(Q,0)=\eta(0,Q)=1$,
\item $Y(a,z)b\in V^{\alpha+\beta}[[z]]z^{-\Delta(\alpha,\beta)}$;
\item there exists $N\in \Delta(\alpha,\beta)$ such that 
\[
\iota_{z,w}(z-w)^N Y(a,z)Y(b,w)=\eta(\alpha,\beta)\iota_{w,z}(z-w)^N Y(b,w)Y(a,z);
\]
\item $|0\rangle\in V^0$, $\partial(V^\alpha)\subset V^\alpha$.
\end{enumerate}
\end{enumerate}
\end{dfn}

We denote the vertex algebras, GVAs and quasi-GVAs by $(V,Y,|0\rangle,\partial)$ or $V$.

Let $(V,Y,|0\rangle,\partial)$ be a GVA (resp., quasi-GVA) with an abelian group $Q$, a $Q$-grading $V=\bigoplus_{\alpha\in Q}V^\alpha$ on $V$ and a function $\eta$ satisfying the axioms.
We call the pair $((V,Y,|0\rangle,\partial),(Q,\eta))$ with the grading $V=\bigoplus_{\alpha\in Q}V^\alpha$ a {\it $Q$-charged GVA} (resp., {\it $Q$-charged quasi-GVA}).

Note that a $Q$-charged quasi-GVA $(V,(Q,\eta))$ is a $Q$-charged GVA if and only if $\eta$ is bimultiplicative.

Note that any GVA is a quasi-GVA, and
by Lemma \ref{sec:lem3}, any quasi-GVA is an AIA.
By Lemma \ref{sec:lem3} and Lemma \ref{sec:lem4}, we have the following lemma.
 Let $(V,(Q,F,\Omega))$ be a $Q$-charged AIA. 
Set $\eta=\Omega$.

\begin{lem}
\begin{enumerate}\label{sec:lemz2}
\item   The pair $(V,(Q,\eta))$ is a $Q$-charged GVA if  $F(\alpha,\beta,\gamma)=1$ for any $\alpha,\beta,\gamma\in Q$.
\item The pair $(V,(Q,\eta))$ is a $Q$-charged quasi-GVA if  $F(\alpha,\beta,\gamma)=F(\beta,\alpha,\gamma)$ for any $\alpha,\beta,\gamma\in Q$. Moreover, then, $\mu_{\eta}=F$.
\item If $Q=\mathbb{Z}_2$, then $(V,(\mathbb{Z}_2,\eta))$ is a $\mathbb{Z}_2$-charged quasi-GVA with $\mu_\eta=F$.
\end{enumerate}
\end{lem}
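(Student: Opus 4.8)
The plan is to exploit the fact that the locality axiom is essentially the only axiom distinguishing a $Q$-charged (quasi-)GVA from a $Q$-charged AIA, so that the statement reduces to a combinatorial comparison of structure constants together with a check of the numerical side-conditions on $\eta$. Recall that a $Q$-charged AIA $(V,(Q,F,\Omega))$ satisfies locality with the constant
\[
B(\alpha,\beta,\gamma)=F(\beta,\alpha,\gamma)^{-1}\,\Omega(\alpha,\beta)\,F(\alpha,\beta,\gamma),
\]
which in general depends on $\gamma$, whereas a $Q$-charged (quasi-)GVA requires locality with a constant $\eta(\alpha,\beta)$ independent of $\gamma$. Setting $\eta=\Omega$ and keeping the same $\Delta$, the vacuum axiom, translation covariance, $|0\rangle\in V^{0}$, $\partial(V^{\alpha})\subseteq V^{\alpha}$, the charge condition $Y(a,z)b\in V^{\alpha+\beta}[[z]]z^{-\Delta(\alpha,\beta)}$, the relations $\eta(Q,0)=\eta(0,Q)=1$ (from (A5)) and $\eta(\alpha,\beta)\eta(\beta,\alpha)=e^{-2\pi i\Delta(\alpha,\beta)}$ (the defining relation of $\Delta$) are all inherited verbatim from the AIA axioms; this is exactly what Lemma~\ref{sec:lem3} supplies. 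Thus only three things need checking: (i) that $B(\alpha,\beta,\gamma)=\Omega(\alpha,\beta)$ under the respective hypotheses; (ii) bimultiplicativity of $\eta=\Omega$ in case (1); and (iii) that $\mu_{\eta}=F$ and that $\mu_{\eta}(\alpha,\beta,\cdot)$ is multiplicative in cases (2) and (3).

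For (1), assume $F\equiv 1$. Then (A2) collapses to $\Omega(i,j+k)=\Omega(i,j)\Omega(i,k)$ and (A3) collapses to $\Omega(i+j,k)=\Omega(i,k)\Omega(j,k)$, so $\eta=\Omega$ is bimultiplicative, and $B(\alpha,\beta,\gamma)=\Omega(\alpha,\beta)$ holds trivially. Together with the inherited items, this shows $(V,(Q,\eta))$ is a $Q$-charged GVA.

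For (2), assume $F(\alpha,\beta,\gamma)=F(\beta,\alpha,\gamma)$ for all $\alpha,\beta,\gamma$. Then $B(\alpha,\beta,\gamma)=F(\alpha,\beta,\gamma)^{-1}\Omega(\alpha,\beta)F(\alpha,\beta,\gamma)=\Omega(\alpha,\beta)=\eta(\alpha,\beta)$, so the AIA locality becomes the quasi-GVA locality. Solving (A3) for $\Omega(i+j,k)$ and using commutativity of $\mathbb{C}^{\times}$ gives
\[
\mu_{\eta}(i,j,k)=\eta(i+j,k)^{-1}\eta(i,k)\eta(j,k)=F(i,j,k)\,F(k,i,j)\,F(i,k,j)^{-1},
\]
and the symmetry hypothesis forces $F(k,i,j)=F(i,k,j)$, whence $\mu_{\eta}=F$. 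To finish one must show $\mu_{\eta}(\alpha,\beta,\cdot)=F(\alpha,\beta,\cdot)$ is multiplicative: expanding $\Omega(x,k+l)$ via (A2) inside $\mu_{\eta}(i,j,k+l)$ and cancelling the copies of $\mu_{\eta}(i,j,k)$ and $\mu_{\eta}(i,j,l)$ leaves a product of $F$-values equal to $1$ by the ordinary $3$-cocycle identity (A1) — this is precisely the content of Lemma~\ref{sec:lem4}. Hence $(V,(Q,\eta))$ is a $Q$-charged quasi-GVA with $\mu_{\eta}=F$. For (3), on $Q=\mathbb{Z}_{2}$ the normalization (A4) makes the hypothesis of (2) automatic: it is trivial when $\alpha=\beta$, and when $\alpha\neq\beta$ one of $\alpha,\beta$ is $0$ so both sides equal $1$; hence (2) applies and yields the claim, with $\mu_{\eta}=F$.

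The main obstacle is the last point of (2): unlike all the other conditions, the multiplicativity of $\mu_{\eta}=F$ in the third slot is not formal and genuinely uses the $3$-cocycle identity (A1) (packaged as Lemma~\ref{sec:lem4}). On $\mathbb{Z}_{2}$ this step is much easier, since the only nontrivial instance reads $F(1,1,1)^{2}=1$, which is the $i=j=k=l=1$ case of (A1).
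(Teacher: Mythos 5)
Your proposal is correct and takes essentially the paper's route: the paper deduces this lemma directly from Lemma \ref{sec:lem3} and Lemma \ref{sec:lem4} (whose computational proofs are omitted there), and your argument simply supplies those computations — $B(\alpha,\beta,\gamma)=\Omega(\alpha,\beta)$ under the respective hypotheses so that AIA locality becomes (quasi-)GVA locality, bimultiplicativity of $\Omega$ from (A2)--(A3) when $F\equiv 1$, and $\mu_\eta=F$ from (A3) plus the symmetry of $F$ in its first two arguments. One small imprecision in part (2): the residual product $F(k,l,i+j)^{-1}F(k,l,i)F(k,l,j)$ left after your (A2)-expansion is not an instance of (A1) by itself (as stated it is just the multiplicativity you are trying to prove, in permuted variables); to close the step, combine (A1) with the identity $F(i,j,k)=\Omega(i+j,k)^{-1}\Omega(i,k)\Omega(j,k)$ you have already established — e.g.\ expand $F(i,j,k+l)$ by (A1) and rewrite the three extra factors via that identity, which collapses them to exactly $F(i,j,l)$. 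Also note that this multiplicativity statement is the content of Lemma \ref{sec:lem3}(3)(b), not of Lemma \ref{sec:lem4}, which is only the $\mathbb{Z}_2$ symmetry fact you correctly invoke in part (3).
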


Let $(V,(Q,\eta))$ be a $Q$-charged quasi-GVA.
If $\eta$ is identically $1$, the quasi-GVA $V$ is a vertex algebra.

\subsection{Modification of quasi-GVAs}

Let $Q$ be an abelian group.
Let $\varepsilon:Q\times Q\rightarrow \mathbb{C}^\times$ be a function.
Let $(f,\omega)$ denote the abelian group cohomology coboundary of $\varepsilon$, that is,
\[
f(\alpha,\beta,\gamma)=\varepsilon(\alpha,\beta+\gamma)\varepsilon(\beta,\gamma)\varepsilon(\alpha+\beta,\gamma)^{-1}\varepsilon(\alpha,\beta)^{-1}, \quad \alpha,\beta,\gamma\in Q,
\]
\[
 \omega(\alpha,\beta)=\varepsilon(\alpha,\beta)\varepsilon(\beta,\alpha)^{-1}, \quad \alpha,\beta\in Q.
\]

\begin{dfn}
A {\it quasi $2$-cocycle} of $Q$ is a function
 $\varepsilon:Q\times Q\rightarrow \mathbb{C}^\times$  such that
\begin{enumerate}
\item $\varepsilon(0,Q)=\varepsilon(Q,0)=1$;
\item $f(\alpha,\beta,\gamma)=f(\beta,\alpha,\gamma)$ for any $\alpha,\beta,\gamma\in Q$.
\end{enumerate}
\end{dfn}

Note that when $\varepsilon$ is a quasi $2$-cocycle, the coboundary $(f,\omega)$ is a
normalized abelian $3$-cocycle.

\begin{rem}
A $2$-cocycle $\varepsilon$ of $Q$ in terms of the usual group cohomology
satisfies $f(\alpha,\beta,\gamma)=1$ for any $\alpha,\beta,\gamma\in Q$, so is a quasi $2$-cocycle.
\end{rem}

Let $((V,Y,|0\rangle,\partial),(Q,\eta))$ be a $Q$-charged quasi-GVA.
Let $\varepsilon:Q\times Q\rightarrow \mathbb{C}^\times$ be a quasi $2$-cocycle.

Define $Y^\varepsilon:V\times V\rightarrow \mathrm{End}(V)[[z,z^{\mathbb{C}}]]$ by
linearly extending the assignment
\[
Y^\varepsilon(v,z)w=\varepsilon(\alpha,\beta)Y(v,z)w
\]
for $v\in V^\alpha$, $w\in V^\beta$ ($\alpha,\beta\in Q$).

Define $\eta^\varepsilon:Q\times Q\rightarrow \mathbb{C}^\times$ to be
\[
\eta^\varepsilon(\alpha,\beta)=\omega(\alpha,\beta)\cdot \eta(\alpha,\beta) \quad \alpha,\beta\in Q.
\]

\begin{prop}
The pair $((V,Y^\varepsilon,|0\rangle,\partial),(Q,\eta^\varepsilon))$ is a $Q$-charged quasi-GVA.
\end{prop}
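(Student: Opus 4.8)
The plan is to verify each quasi-GVA axiom for the quadruple $(V,Y^\varepsilon,|0\rangle,\partial)$ equipped with the same $Q$-grading $V=\bigoplus_{\alpha\in Q}V^\alpha$ and with the function $\eta^\varepsilon=\omega\cdot\eta$. Since $Y^\varepsilon$ differs from $Y$ only by the nonzero scalar $\varepsilon(\alpha,\beta)$ on $V^\alpha\times V^\beta$, and $\varepsilon$ is trivial as soon as one of its arguments has charge $0$, most of the axioms are immediate: $Y^\varepsilon(|0\rangle,z)=\mathrm{id}_V$ and the vacuum axiom hold because $|0\rangle\in V^0$; translation covariance $[\partial,Y^\varepsilon(a,z)]=\partial_zY^\varepsilon(a,z)$ holds because $\partial$ is $Q$-grading preserving and hence commutes with the scalars $\varepsilon(\alpha,\beta)$; the normalization $\eta^\varepsilon(Q,0)=\eta^\varepsilon(0,Q)=1$ follows from $\omega(Q,0)=\omega(0,Q)=1$ together with $\eta(Q,0)=\eta(0,Q)=1$; and $|0\rangle\in V^0$, $\partial(V^\alpha)\subset V^\alpha$ are unchanged. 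What remains to check is the bilinear map $\Delta^\varepsilon$ attached to $\eta^\varepsilon$, the statement $Y^\varepsilon(V^\alpha,z)V^\beta\subset V^{\alpha+\beta}[[z]]z^{-\Delta^\varepsilon(\alpha,\beta)}$, the locality axiom, and the multiplicativity of $\mu_{\eta^\varepsilon}$ in its last slot.

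First I would take $\Delta^\varepsilon=\Delta$: since $\omega(\alpha,\beta)\omega(\beta,\alpha)=\varepsilon(\alpha,\beta)\varepsilon(\beta,\alpha)^{-1}\varepsilon(\beta,\alpha)\varepsilon(\alpha,\beta)^{-1}=1$, we get $\eta^\varepsilon(\alpha,\beta)\eta^\varepsilon(\beta,\alpha)=\eta(\alpha,\beta)\eta(\beta,\alpha)=e^{-2\pi i\Delta(\alpha,\beta)}$, and $\Delta$ is bilinear. As $Y^\varepsilon(a,z)b=\varepsilon(\alpha,\beta)Y(a,z)b$ and $Y(a,z)b\in V^{\alpha+\beta}[[z]]z^{-\Delta(\alpha,\beta)}$, the grading/order statement for $Y^\varepsilon$ follows immediately.

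The crux is locality. Fix $a\in V^\alpha$, $b\in V^\beta$, $c\in V^\gamma$. Applying the definition of $Y^\varepsilon$ twice (and using that $Y(b,w)c$ is $V^{\beta+\gamma}$-valued) gives $Y^\varepsilon(a,z)Y^\varepsilon(b,w)c=\varepsilon(\alpha,\beta+\gamma)\varepsilon(\beta,\gamma)\,Y(a,z)Y(b,w)c$ and, symmetrically, $Y^\varepsilon(b,w)Y^\varepsilon(a,z)c=\varepsilon(\beta,\alpha+\gamma)\varepsilon(\alpha,\gamma)\,Y(b,w)Y(a,z)c$. Multiplying the locality identity for $Y$ --- with its locality bound $N\in\Delta(\alpha,\beta)$ and scalar $\eta(\alpha,\beta)$ --- through by $\varepsilon(\alpha,\beta+\gamma)\varepsilon(\beta,\gamma)$, the left side becomes $\iota_{z,w}(z-w)^N Y^\varepsilon(a,z)Y^\varepsilon(b,w)c$; on the right side one uses the definition of the coboundary to write $\varepsilon(\alpha,\beta+\gamma)\varepsilon(\beta,\gamma)=f(\alpha,\beta,\gamma)\,\varepsilon(\alpha+\beta,\gamma)\,\varepsilon(\alpha,\beta)$ and $\varepsilon(\beta,\alpha+\gamma)\varepsilon(\alpha,\gamma)=f(\beta,\alpha,\gamma)\,\varepsilon(\alpha+\beta,\gamma)\,\varepsilon(\beta,\alpha)$. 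The common factor $\varepsilon(\alpha+\beta,\gamma)$ cancels, the quasi-$2$-cocycle symmetry $f(\alpha,\beta,\gamma)=f(\beta,\alpha,\gamma)$ kills the residual $\gamma$-dependence, and what survives is exactly $\eta(\alpha,\beta)\varepsilon(\alpha,\beta)\varepsilon(\beta,\alpha)^{-1}=\eta^\varepsilon(\alpha,\beta)$. Hence $(Y^\varepsilon(a,z),Y^\varepsilon(b,w))$ is local with the same bound $N\in\Delta^\varepsilon(\alpha,\beta)$ and scalar $\eta^\varepsilon(\alpha,\beta)$; this is precisely what forces the formula $\eta^\varepsilon=\omega\cdot\eta$.

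It remains to verify axiom (3a). A direct computation, expanding $\eta^\varepsilon=\omega\cdot\eta$ and repeatedly invoking the coboundary identity $\varepsilon(\lambda,\mu+\nu)\varepsilon(\mu,\nu)=f(\lambda,\mu,\nu)\varepsilon(\lambda+\mu,\nu)\varepsilon(\lambda,\mu)$, yields $\mu_{\eta^\varepsilon}=\mu_\eta\cdot f$, the $\omega$-contribution collapsing to $f$ once the quasi-$2$-cocycle symmetry is applied. Now $\mu_\eta$ is multiplicative in $\gamma$ by hypothesis, and $f$ is multiplicative in its third slot: being a group-cohomology coboundary that is symmetric in its first two slots, its associated deviation $\varepsilon(\lambda,\mu+\nu)\varepsilon(\lambda,\mu)^{-1}\varepsilon(\lambda,\nu)^{-1}$ becomes symmetric in all three arguments, and a short computation then gives $f(\alpha,\beta,\gamma+\delta)=f(\alpha,\beta,\gamma)f(\alpha,\beta,\delta)$. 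Therefore $\mu_{\eta^\varepsilon}$ is multiplicative in $\gamma$, completing the list of axioms. Alternatively, the locality computation already exhibits $(V,Y^\varepsilon,|0\rangle,\partial)$ as an AIA whose normalized abelian $3$-cocycle is the pointwise product of $(\mu_\eta,\eta)$ --- the NA3 of the original quasi-GVA regarded as an AIA (Lemma \ref{sec:lem3}) --- with $(f,\omega)$; since $\mu_\eta\cdot f$ is symmetric in its first two slots, Lemma \ref{sec:lemz2} yields the $Q$-charged quasi-GVA structure with $\eta^\varepsilon=\eta\cdot\omega$ at once. I expect the only real obstacle to be keeping the $2$- and $3$-cocycle bookkeeping straight in the locality step and in the check that $\mu_{\eta^\varepsilon}$ is multiplicative in $\gamma$; all the other axioms are formal consequences of $Y^\varepsilon$ being a charge-graded rescaling of $Y$.
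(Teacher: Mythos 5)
Your proposal is correct and its crux is exactly the paper's argument: the paper's proof declares that only the locality axiom needs checking and verifies it via the same identity $\omega(\alpha,\beta)=\varepsilon(\alpha,\beta+\gamma)\varepsilon(\beta,\gamma)\varepsilon(\beta,\alpha+\gamma)^{-1}\varepsilon(\alpha,\gamma)^{-1}$, which follows from the quasi-$2$-cocycle symmetry $f(\alpha,\beta,\gamma)=f(\beta,\alpha,\gamma)$. The extra verifications you carry out (vacuum, translation covariance, $\Delta^\varepsilon=\Delta$, and the multiplicativity of $\mu_{\eta^\varepsilon}=\mu_\eta\cdot f$ via Lemma \ref{sec:lem3} applied to the coboundary $(f,\omega)$) are exactly the steps the paper leaves implicit, so your write-up is just a more detailed version of the same proof.
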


\begin{proof}
It suffices to show the locality axiom.
Let $\alpha,\beta$ be elements of $Q$ and $a,b$ elements of $V^\alpha,V^\beta$.
Let $N\in \Delta(\alpha,\beta)$ be a locality bound of $(Y(a,z),Y(b,w))$.
Let $c$ be an element of $V^\gamma$ with $\gamma\in Q$.
Since $f(\alpha,\beta,\gamma)=f(\beta,\alpha,\gamma)$, we have
$\omega(\alpha,\beta)=\varepsilon(\alpha,\beta+\gamma)\varepsilon(\beta,\gamma) \varepsilon(\beta,\alpha+\gamma)^{-1}\varepsilon(\alpha,\gamma)^{-1}$. 
Therefore, we have $\iota_{z,w}(z-w)^N Y^\varepsilon(a,z)Y^\varepsilon(b,w)c-\eta^\varepsilon(\alpha,\beta)\iota_{w,z}(z-w)^N Y^\varepsilon(b,w)Y^\varepsilon(a,z)c=0$, as desired.
\end{proof}

We call the quasi-GVA $(V,Y^\varepsilon,|0\rangle,\partial)$ the {\it $\varepsilon$-modified quasi-GVA}
and denote it by $V^\varepsilon$.

\begin{rem}
Note that it is a special case of the $\varepsilon$-modify (twist) of the AIAs by an abelian $2$-cochain $\varepsilon$ \cite{DL,Car}.
When $V$ is a GVA and $\varepsilon$ a $2$-cocycle, the $\varepsilon$-modified quasi-GVA $V^\varepsilon$ is a GVA and coincides with the $\varepsilon$-modified GVA \cite{BK}.
\end{rem}

\subsection{Weight gradings and vertex operator algebras}

Let $V$ be an abelian intertwining algebra.
A $\mathbb{C}$-grading $V=\bigoplus_{n\in\mathbb{C}}V_n$ is called a {\it weight grading}
if 
\[
a(n)b\in V_{k+l-n-1}, \quad a\in V_k,\ b\in V_l,\ k,l,n\in \mathbb{C}.
\]

Let $V=\bigoplus_{\alpha\in Q}V^\alpha$ be a $Q$-charged abelian intertwining algebra.
A non-zero vector $\omega\in V^0$ is called a {\it Virasoro vector} of central charge $c$ if
the field $Y(\omega,z)=\sum_{n\in\mathbb{Z}}L_n z^{-n-2}$ defines the module structure on $V$ over the Virasoro algebra $\mathrm{Vir}=\bigoplus_{n\in\mathbb{Z}}\mathbb{C}L_n\oplus \mathbb{C}C$ with the central charge $c$, that is,
the OPE ($\lambda$-bracket) of $\omega$ and itself has the form
\[
[\omega_\lambda \omega]=\partial \omega+2\lambda \omega+\frac{c}{12}\lambda^3|0\rangle.
\]
Here, for $a\in V^0$ and $b\in V$, the $\lambda$-bracket is defined to be $[a_\lambda b]=\sum_{n=0}^\infty \lambda^n a(n)b/n!$.
We call $c$ the {\it central charge} of $\omega$.

Let $(V=\bigoplus_{\alpha\in Q}V^\alpha,Y,|0\rangle,\partial)$ be a $Q$-charged abelian intertwining algebra with the weight grading
$V=\bigoplus_{n\in\mathbb{C}}V_n$.
A Virasoro vector $\omega\in V^0$ with the field $Y(\omega,z)=\sum_{n\in \mathbb{Z}}L_nz^{-n-2}$ is called a {\it conformal vector} if $L_0 v=n v$ ($v\in V_n$, $n\in\mathbb{C}$)
and $L_{-1}=\partial$.
Each conformal vector $\omega$ belongs to $V_2$.

Let $V$ be an abelian intertwining algebra equipped with a conformal vector $\omega$.
A vector $v\in V$ is called a {\it primary vector} of conformal weight $n$ if $L_mv=0$ for $m\in\mathbb{Z}_{>0}$ and
$L_0v=nv$.

\begin{dfn}
Let $V$ be a vertex algebra with the weight grading $V=\bigoplus_{n\in \mathbb{C}}V_n$.
Let $\omega\in V_2$ be a conformal vector of central charge $c$.
The pair $(V,\omega)$ is called a {\it vertex operator algebra} of central charge $c$.
\end{dfn}

Let $\Gamma$ be a subset of $\mathbb{C}$.
We call a VOA $V$ {\it $\Gamma$-graded} if $V_n=0$ for each $n\in\mathbb{C}\setminus \Gamma$.

A $\mathbb{Q}_+$-graded VOA $V$ is called  {\it of CFT-type} if $V_0=\mathbb{C}|0\rangle$.

Let $V$ and $W$ be VOAs with conformal vectors $\omega$ and $\tau$.
A vertex algebra homomorphism $f:V\rightarrow W$ is called a {\it vertex operator algebra homomorphism}
if $f(\omega)=\tau$.

Let $V$ be a VOA with a confomal vector $\omega$.
Let $u,v\in V$ be primary vectors of conformal weight $3/2$.
Then, by the commutation relation (\ref{eqn:comm}) of the vertex algebras and $L_n=\omega(n+1)$ ($n\in\mathbb{Z}$),
\begin{equation}\label{eqn:omega}
L_1(u(0)v)=u(1)v,\quad \mbox{and}\quad L_2(u(0)v)=\frac{3}{2}u(2)v.
\end{equation}

Let $V$ be a VOA.
Set $C_2(V)=\{a(n)b\in V|a,b\in V, n\leq -2\}$.
A VOA $V$ is called {\it $C_2$-cofinite} if $V/C_2(V)$ is finite dimensional.

\subsection{Tensor products and graded tensor products}

Let $i$ be an element of $\{1,2\}$.
Let $Q_i$ be an abelian group with the normalized abelian $3$-cocycle 
$(F_i,\Omega_i)$.
Let $(V_i,Y_i,|0\rangle_i,\partial_i)$ be a $Q_i$-charged abelian intertwining algebra with $(F_i,\Omega_i)$.

The {\it tensor product abelian intertwining algebra} $V_1\otimes V_2$ of $V_1,V_2$ is
the $Q_1\times Q_2$-charged abelian intertwining algebra
\[
(V_1\otimes V_2, Y_1\otimes Y_2,|0\rangle_1\otimes |0\rangle_2,\partial_1\otimes \mathrm{id}_{V_1}+\mathrm{id}_{V_2}\otimes \partial_2)
\]
with the normalized abelian $3$-cocycle $(F_1\times F_2,\Omega_1\times \Omega_2)$.

Suppose $Q=Q_1=Q_2$.
Then, the abelian intertwining subalgebra
\[
\bigoplus_{\alpha\in Q}V_1^\alpha\otimes V_2^\alpha\subset V_1\otimes V_2
\]
is called the {\it graded tensor product} of $V_1,V_2$.
The graded tensor product is a $Q$-charged abelian intertwining algebra with the normalized abelian $3$-cocycle $(F_1F_2,\Omega_1\Omega_2)$.
Here, $(F_1F_2)(\alpha,\beta,\gamma)=F_1(\alpha,\beta,\gamma)F_2(\alpha,\beta,\gamma)$, and $(\Omega_1\Omega_2)(\alpha,\beta)=\Omega_1(\alpha,\beta)\Omega_2(\alpha,\beta)$ ($\alpha,\beta,\gamma\in Q$).

Suppose $Q_1=Q_2=\mathbb{Z}_2=\{0,1\}$.
Then, $V_i$ is a $\mathbb{Z}_2$-charged quasi-GVA with $\eta_i=\Omega_i$ ($i=1,2$).

\begin{lem}\label{sec:lemgraded}
If $\Omega_1(1,1)\Omega_2(1,1)=1$, then the graded tensor product
$V_1^0\otimes V_2^0\oplus V_1^1\otimes V_2^1$ is a vertex algebra.
\end{lem}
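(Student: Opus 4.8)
The plan is to verify the vertex-algebra axioms for the graded tensor product $W:=V_1^0\otimes V_2^0\oplus V_1^1\otimes V_2^1$, but rather than checking them by hand it is cleaner to route the argument through the quasi-GVA formalism already set up in the excerpt. Recall from the discussion of tensor products that $W$ is already a $\mathbb{Z}_2$-charged abelian intertwining algebra carrying the normalized abelian $3$-cocycle $(F_1F_2,\Omega_1\Omega_2)$, where $(F_i,\Omega_i)$ is the cocycle of $V_i$. The vacuum and translation-covariance axioms of a vertex algebra are part of the AIA axioms, hence automatic for $W$; the only content at stake is that $Y(a,z)b\in W((z))$ and that any pair of fields admits an \emph{integer} locality bound, and both of these are governed by the bilinear form $\Delta$ and the braiding map $B$ attached to $(F_1F_2,\Omega_1\Omega_2)$.

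First I would invoke Lemma \ref{sec:lemz2}(3): since the charge group is $\mathbb{Z}_2$, the AIA $W$ is automatically a $\mathbb{Z}_2$-charged quasi-GVA with $\eta:=\Omega_1\Omega_2$ and $\mu_\eta=F_1F_2$. (On $\mathbb{Z}_2$ there is nothing to check for the quasi-$2$-cocycle symmetry $F(\alpha,\beta,\gamma)=F(\beta,\alpha,\gamma)$: it is trivial when $\alpha=\beta$, and when $\{\alpha,\beta\}=\{0,1\}$ both sides equal $1$ by the normalization (A4).) Next I would evaluate $\eta=\Omega_1\Omega_2$ on $\mathbb{Z}_2\times\mathbb{Z}_2$. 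By the normalization (A5) one has $\Omega_i(0,\alpha)=\Omega_i(\alpha,0)=1$, so $\eta$ equals $1$ on every pair containing the identity, while $\eta(1,1)=\Omega_1(1,1)\Omega_2(1,1)=1$ by hypothesis; hence $\eta$ is identically $1$. Then the observation recorded immediately after the definition of a quasi-GVA — a quasi-GVA with $\eta\equiv 1$ is a vertex algebra — yields the conclusion.

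If one prefers to spell out that last step rather than quote it, here is the mechanism, which is also what the hypothesis $\Omega_1(1,1)\Omega_2(1,1)=1$ is designed to produce. From $\eta\equiv 1$ we get $e^{-2\pi i\Delta(\alpha,\beta)}=\eta(\alpha,\beta)\eta(\beta,\alpha)=1$, and since $\Delta$ is valued in $\mathbb{C}/\mathbb{Z}$ this forces $\Delta\equiv 0$ (not merely integrality), so that $Y(a,z)b\in W[[z]]z^{\mathbb{Z}}=W((z))$ and the locality bound $N$ can be taken in $\mathbb{Z}$; with $\eta(\alpha,\beta)=1$ the quasi-GVA locality relation becomes $\iota_{z,w}(z-w)^NY(a,z)Y(b,w)=\iota_{w,z}(z-w)^NY(b,w)Y(a,z)$, which is exactly the vertex-algebra locality axiom. (Equivalently, in AIA terms one computes directly $B_{12}(1,1,\gamma)=\Omega_1(1,1)\Omega_2(1,1)=1$ for all $\gamma$, using $F(1,1,\gamma)$ cancelling, and the remaining values of $B_{12}$ are $1$ by (A4)–(A5), so the braiding is trivial.) I do not expect a serious obstacle: the real work has already been done in establishing that the graded tensor product is an AIA with the product cocycle and in Lemma \ref{sec:lemz2}(3). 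The only point warranting care is the cocycle bookkeeping $\Delta=\Delta_1+\Delta_2$ and the implication ``$\eta\equiv 1\Rightarrow\Delta\equiv 0$'', which relies on $\Delta$ taking values in $\mathbb{C}/\mathbb{Z}$.
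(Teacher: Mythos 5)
Your proposal is correct and follows essentially the same route as the paper: the paper's one-line proof likewise reduces to observing that the normalization $\Omega_i(0,\cdot)=\Omega_i(\cdot,0)=1$ together with the hypothesis $\Omega_1(1,1)\Omega_2(1,1)=1$ forces $\eta=\Omega_1\Omega_2\equiv 1$, so that by Lemma \ref{sec:lemz2}(3) and the remark that a quasi-GVA with $\eta\equiv 1$ is a vertex algebra, the graded tensor product is a vertex algebra. Your extra unpacking of the step ``$\eta\equiv 1\Rightarrow\Delta\equiv 0$, integer locality bound'' is just a more explicit version of what the paper leaves implicit.
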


\begin{proof}
Since $\Omega_i(0,0)=\Omega_i(1,0)=\Omega_i(0,1)=1$, we have the lemma.
\end{proof}

Suppose that $V_i$ equips with the conformal vector $\omega_i$ of central charge $c_i$ ($i=1,2$).
Unless otherwise noted, we equip the tensor product and graded tensor product of $V_1$ and $V_2$ with the conformal vector
$\omega=\omega_1\otimes |0\rangle_2+|0\rangle_1\otimes \omega_2$ of central charge $c_1+c_2$, as usual.
Note that for each primary vector $u_i\in V_i$ of conformal weight $n_i$ ($i=1,2$), the vector $u_1\otimes u_2$
is primary of conformal weight $n_1+n_2$.

\section{Appendix B. Simple current extensions}
\label{sec:appendb}
In this section, we consider extended abelian intertwining algebras.
See \cite{DL,Car2,Car}.

\subsection{$\mathbb{Z}_2$ simple current extensions}

Let $V$ be a VOA with a simple current irreducible $V$-module $M$ such that $M\boxtimes M=V$.
Here, $\boxtimes$ denotes the fusion product.
Let $Y:V\times V\rightarrow V[[z]]z^\mathbb{Z}$ denote the vertex operator of $V$ and $Y_M:V\times M\rightarrow M[[z]]z^\mathbb{Z}$ the module structure over $M$.
Let $Y_M^*:M\times V\rightarrow M[[z]]z^\mathbb{Z}$ denote the transpose of $Y_M$ defined to be
\[
Y_M^*(u,z)a=e^{zL_{-1}}Y_M(a,-z)u,\quad u\in M,\ a\in V.
\]
Let  $I:M\times M\rightarrow V[[z]]z^\mathbb{C}$ be a non-zero intertwining operator.
We have
\begin{eqnarray}\label{eqn:fixedvector1}
z^t I(v,z)v\in V[[z]],
\end{eqnarray}
\begin{eqnarray}\label{eqn:fixedvector2}
z^tI(v,z)v|_{z=0}\neq 0
\end{eqnarray}
with some $v\in M$ and $t\in \mathbb{C}$.
Then, $I(u,z)w\in V[[z]]z^{-t+\mathbb{Z}}$ for any $u,w\in M$,
since $M$ is an irreducible $V$-module.

Suppose that $\bar{V}=V\oplus M$ is a $\mathbb{Z}_2$-graded AIA with the vertex operator
 $\bar{Y}:\bar{V}\times \bar{V}\rightarrow \bar{V}[[z]]z^{\mathbb{C}}$ defined to be
\[
\bar{Y}(a+u,z)(b+w)=Y(a,z)b+Y_M(a,z)w+Y_M^*(u,z)b+I(u,z)w,
\]
for $a,b\in V$ and $u,w\in M$, the $\mathbb{Z}_2$-gradation $\bar{V}=\bar{V}_0\oplus \bar{U}_1$ with $\bar{V}_0=V$ and $\bar{V}_1=M$ and certain normalized abelian $3$-cocycle $(F,\Omega)$ with $\Delta(1,1)=t+\mathbb{Z}$.
By Lemma \ref{sec:lemz2} (3), the pair $(\bar{V},(\mathbb{Z}_2,\Omega))$ is a $\mathbb{Z}_2$-charged quasi-GVA.
We call it
the {\it $\mathbb{Z}_2$ simple current extension} of $V$.

\begin{prop}\label{sec:propvir}
$\Omega(0,0)=\Omega(0,1)=\Omega(1,0)=1$, $\Omega(1,1)=e^{-\pi i t}$.
\end{prop}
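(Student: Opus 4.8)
The plan is to read off the three boundary values directly from the cocycle axioms, pin down $\Omega(1,1)$ up to a sign from the quadratic‑form relation, and then fix the sign with one application of the locality axiom. The equalities $\Omega(0,0)=\Omega(0,1)=\Omega(1,0)=1$ are just the cases $i=0$ and $i=1$ of axiom (A5) for the normalized abelian $3$-cocycle $(F,\Omega)$ carried by $\bar V$. For $\Omega(1,1)$ I would start from the defining relation $\Omega(i,j)\Omega(j,i)=e^{-2\pi i\Delta(i,j)}$ of the bilinear form $\Delta$: with $i=j=1$, and using that the extension $\bar V$ is constructed so that $\Delta(1,1)=t+\mathbb{Z}$, this gives $\Omega(1,1)^{2}=e^{-2\pi i t}$, hence $\Omega(1,1)=\pm e^{-\pi i t}$; only the sign remains in doubt.

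To determine the sign I would invoke the locality axiom of $\bar V$ for the pair $\bigl(\bar Y(v,z),\bar Y(v,w)\bigr)$ of charge-$1$ fields, where $v\in M=\bar V^{1}$ is the vector of (\ref{eqn:fixedvector1})--(\ref{eqn:fixedvector2}), evaluated on $c=|0\rangle\in\bar V^{0}$; by (A4) the relevant braiding scalar is $B(1,1,0)=\Omega(1,1)$. Write $I(v,z)v=\sum_{n\ge 0}c_{n}z^{n-t}$ with $c_{0}\ne 0$. Using $\bar Y(v,w)|0\rangle=Y_{M}^{*}(v,w)|0\rangle=e^{wL_{-1}}v$ together with the $L_{-1}$-translation identity $I(v,z)e^{wL_{-1}}=e^{wL_{-1}}\,\iota_{z,w}I(v,z-w)$ and its $z\leftrightarrow w$ mirror, and multiplying through by $(z-w)^{N}$ for a locality bound $N=t+k$ with $k\in\mathbb{Z}_{\ge 0}$ (a locality bound may always be enlarged), both sides reduce to an identity of formal series
\[
e^{wL_{-1}}\sum_{n\ge 0}c_{n}(z-w)^{n+k}
\;=\;
\Omega(1,1)\,e^{\pi i t}\,e^{zL_{-1}}\sum_{n\ge 0}(-1)^{n}c_{n}(z-w)^{n+k},
\]
the factor $e^{\pi i t}$ being precisely the discrepancy between the expansions $\iota_{z,w}(z-w)^{t}$ and $\iota_{w,z}(z-w)^{t}$. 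Substituting $e^{zL_{-1}}=e^{wL_{-1}}e^{(z-w)L_{-1}}$ on the right and comparing the coefficients of the lowest power $(z-w)^{k}$ gives $e^{wL_{-1}}c_{0}=\Omega(1,1)e^{\pi i t}e^{wL_{-1}}c_{0}$; since $c_{0}\ne 0$ this forces $\Omega(1,1)=e^{-\pi i t}$.

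The inputs beyond routine bookkeeping are only the $L_{-1}$-translation property of intertwining operators and the fact that the special vector $v$ of (\ref{eqn:fixedvector2}) guarantees $c_{0}\ne 0$, which is what allows us to cancel $e^{wL_{-1}}c_{0}$. The genuinely delicate step — and the reason the answer is $e^{-\pi i t}$ rather than $e^{+\pi i t}$ — is the careful tracking of the phase $e^{\pi i t}$ produced by expanding $(z-w)^{t}$ in the two orders $\iota_{z,w}$ versus $\iota_{w,z}$; getting this phase right is the crux of the computation.
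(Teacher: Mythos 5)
Your argument is correct and is essentially the paper's: the paper simply quotes its skew-symmetry lemma (Lemma \ref{sec:skewsymmetry}, itself proved by applying locality with $c=|0\rangle$ and the Taylor identity of Lemma \ref{sec:taylor}) to $a=b=v$ and evaluates the leading coefficient using (\ref{eqn:fixedvector1})--(\ref{eqn:fixedvector2}), which is exactly the computation you carry out inline, including the crucial phase $e^{\pi i t}$ from the two expansions of $(z-w)^{t}$. The preliminary observation $\Omega(1,1)=\pm e^{-\pi i t}$ from the quadratic form is harmless but redundant, since the locality computation pins down $\Omega(1,1)$ outright.
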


\begin{proof}
By Lemma \ref{sec:skewsymmetry} (skew-symmetry) and eq.\,(\ref{eqn:fixedvector1}),
\[
z^t I(v,z)v|_{z=0}=z^t \Omega(1,1)e^{zL_{-1}}I(v,e^{-\pi i}z)v|_{z=0}.
\]
By eq.\,(\ref{eqn:fixedvector1}) again,
The RHS is equal to $\Omega(1,1)e^{-\pi i (-t)}z^t I(v,z)v$.
By eq.\,(\ref{eqn:fixedvector2}), we have $\Omega(1,1)=e^{-\pi i t}$, which completes the proof.
\end{proof}

\subsection{$\mathbb{Z}_2$ simple current extension of the $\mathcal{M}(3,p)$ Virasoro minimal models}
\label{sec:vir}

Let $p$ be a positive integer such that $p>3$ and $(3,p)=1$.
Set $c=c_{3,p}=1-6(3-p)^2/(3p)$.
Consider the stress energy tensor $T(z)=\sum_{n\in\mathbb{Z}}L_n z^{-n-2}$.
Let $U$ denote the Virasoro minimal model $L(c,0)=\mathcal{M}(3,p)$.
Then, $U$ has the VOA structure
generated by
\[
Y(L_{-2}|0\rangle,z)=T(z)=\sum_{n\in\mathbb{Z}}L_n z^{-n-2}.
\]

Set $h=(p-2)/4$.
The irreducible module 
\[
M=L(c,h)=U\cdot |h\rangle
\]
 is a simple current of the VOA $U$.
Here, $|h\rangle$ is a highest weight vector of central charge $c$ and conformal weight $h$,
and $U\cdot|h\rangle$ denote the irreducible cyclic module.
Put $v=|h\rangle$.
Let $Y_M:U\times M\rightarrow M((z))$ denote the module structure of $M$ over $U$ with 
\[
Y_M(L_{-2}|0\rangle,z)=\sum_{n\in\mathbb{Z}}L_n z^{-n-2}.
\]
Let $Y_M^*:M\times U\rightarrow M((z))$ denote the transpose of $Y_M$ defined by
\[
Y_M^*(u,z)a=e^{zL_{-1}}Y_M(a,-z)u,
\]
$a\in U$, $u\in M$.

Set $\Gamma=2h+\mathbb{Z}$.
Let $I:M\times M\rightarrow U[[z]]z^{-\Gamma}$ denote the intertwining operator
of type $\begin{pmatrix}\quad U\quad\\M\ M\end{pmatrix}$ normalized as
$z^{2h}I(v,z)v|_{z=0}=|0\rangle$.
Then,
\[
I(|h\rangle,z)|h\rangle= |0\rangle z^{-2h}+0+\frac{2h}{c}L_{-2}|0\rangle z^{-2h+2}+\cdots.
\]

The extended vertex operator $\bar{Y}=Y+Y_M+Y_M^*+I$ is local with itself (cf.\ \cite{FJM}).
Therefore the pair $(\bar{U},(\mathbb{Z}_2,\Omega))$ of $U$ with $\bar{U}=U\oplus M$ is the $\mathbb{Z}_2$ simple current extension of $U$.
By Proposition \ref{sec:propvir}, we have $\Omega(0,0)=\Omega(0,1)=\Omega(1,0)=1$ and $\Omega(1,1)=e^{-2\pi i h}$.

Note that the extended algebras of $\mathcal{M}(3,p)$ are also considered in \cite{JM,MR} with another algebraic structure.

\subsection{The lattice generalized vertex algebra associated with $A_1^\circ$ and the simple current extension of $V_1(A_1)$}

In this section, we consider the simple current extension $V_1(A_1)\oplus V_1(A_1;\alpha/2)$ as the modification of the lattice GVA $V_{A_1^\circ}$, in order to obtain the vertex operators explicitly.

Let $A_1=\mathbb{Z}\alpha$ be the root lattice of type $A_1$ with 
the bilinear form defined by $(\alpha|\alpha)=2$.
Let $A_1^\circ=\mathbb{Z}\alpha/2$ be the dual lattice of $A_1$.

Consider the lattice GVA $F=V_{A_1^\circ}=M(1)\otimes \mathbb{C}[A_1^\circ]$ with the vertex operator ($\beta\in A_1^\circ$)
\[
X(e^\beta,z)=\mathrm{exp}\left(\sum_{n<0}\frac{1}{-n}\beta_{n}z^{-n}\right)\mathrm{exp}\left(\sum_{n>0}\frac{1}{-n}\beta_{n}z^{-n}\right)\otimes e^\beta z^\beta.
\]
The pair $(F,(\mathbb{Z},\eta))$ is a $\mathbb{Z}$-charged GVA with
 the $\mathbb{Z}$-grading $F=\bigoplus_{n\in\mathbb{Z}}F^{n}$ with $F^{n}\cong M(1)\otimes e^{n\alpha/2}$ and $\eta:\mathbb{Z}\times \mathbb{Z}\rightarrow \mathbb{C}^\times$ defined by $\eta(k,l)=e^{\pi i (k\alpha/2|l\alpha/2)}=e^{\pi i kl/2}$ (cf. \cite{BK,DL}).

Note that the subGVA $F^{2\mathbb{Z}}$ is isomorphic to the VOA $V_{A_1}$, and as a $V_{A_1}$-submodule, 
the subspace $F^{2\mathbb{Z}+1}$ is isomorphic to the $V_{A_1}$-module $V_{A_1+\alpha/2}$.

Actually,  since $\eta(k+4n,l+4m)=\eta(k,l)$ for any $k,l,n,m\in\mathbb{Z}$, the bimultiplicative function $\eta:\mathbb{Z}_4\times \mathbb{Z}_4\rightarrow \mathbb{C}^\times$ is induced, and
 the pair $(F,(\mathbb{Z}_4,\eta))$ is a $\mathbb{Z}_4$-charged GVA with the $\mathbb{Z}_4$-grading
$F=\bigoplus_{n\in \mathbb{Z}_4} F^{n+4\mathbb{Z}}$.

Now we modify the $\mathbb{Z}_4$-charged GVA $F$ with certain quasi $2$-cocycle $\varepsilon:\mathbb{Z}_4\times \mathbb{Z}_4\rightarrow \mathbb{C}^\times$ such that
the $\varepsilon$-modified quasi-GVA $F^\varepsilon$ is isomorphic to the simple current extension of $V_{A_1}$.

Define the function $\varepsilon:\mathbb{Z}_4\times  \mathbb{Z}_4\rightarrow \mathbb{C}^\times$ to be
\begin{eqnarray*}
\varepsilon(k,l)=\begin{cases}-1 & (k,l)=(1,2),(2,2),(2,3)\ \mbox{and}\ (3,1),\\ 1& \mbox{otherwise,}\end{cases}
\end{eqnarray*}
for $k,l\in \mathbb{Z}_4$.
Therefore, $\varepsilon(0,k)=1$, $\varepsilon(1,k)=(-1)^{\delta_{2,k}}$,
 $\varepsilon(2,k)=(-1)^{\delta_{2,k}+\delta_{3,k}}$, and $\varepsilon(3,k)=(-1)^{\delta_{1,k}}$ for $k\in \mathbb{Z}_4$.
Let $(f,\omega)$ denote the abelian group cohomology coboundary of $\varepsilon$.
Then, 
\[
\omega(k,l)=(-1)^{kl(kl-1)/2}
\] for $k,l\in Q$.

\begin{lem}
The function $\varepsilon$ is a quasi $2$-cocycle of $Q=\mathbb{Z}_4$.
\end{lem}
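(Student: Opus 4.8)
The plan is to check the two defining conditions of a quasi $2$-cocycle by a short, organized computation. Condition (1), that $\varepsilon(0,k)=\varepsilon(k,0)=1$ for all $k\in\mathbb{Z}_4$, is immediate from the displayed table for $\varepsilon$, since none of the four exceptional pairs $(1,2),(2,2),(2,3),(3,1)$ has a zero entry. So the substance is condition (2): $f(\alpha,\beta,\gamma)=f(\beta,\alpha,\gamma)$ for all $\alpha,\beta,\gamma\in\mathbb{Z}_4$, where
\[
f(\alpha,\beta,\gamma)=\varepsilon(\alpha,\beta+\gamma)\,\varepsilon(\beta,\gamma)\,\varepsilon(\alpha+\beta,\gamma)^{-1}\,\varepsilon(\alpha,\beta)^{-1}.
\]

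First I would simplify what has to be verified. Since $\varepsilon$ is $\{1,-1\}$-valued, the factor $\varepsilon(\alpha+\beta,\gamma)^{\pm 1}$ appearing in both $f(\alpha,\beta,\gamma)$ and $f(\beta,\alpha,\gamma)$ is manifestly invariant under $\alpha\leftrightarrow\beta$ and cancels, so condition (2) is equivalent to the identity
\[
\varepsilon(\alpha,\beta+\gamma)\,\varepsilon(\beta,\gamma)\,\varepsilon(\beta,\alpha)=\varepsilon(\beta,\alpha+\gamma)\,\varepsilon(\alpha,\gamma)\,\varepsilon(\alpha,\beta),\qquad \alpha,\beta,\gamma\in\mathbb{Z}_4 .
\]
This identity is itself symmetric under $\alpha\leftrightarrow\beta$, and it holds trivially in several cases: when $\alpha=\beta$ the two sides are literally equal; when $\gamma=0$ both sides reduce to $\varepsilon(\alpha,\beta)\varepsilon(\beta,\alpha)$; and when $\alpha=0$ or $\beta=0$ it follows from condition (1). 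Hence it suffices to treat $\alpha,\beta,\gamma\in\{1,2,3\}$ with $\alpha\neq\beta$, and by the $\alpha\leftrightarrow\beta$ symmetry one may take $(\alpha,\beta)\in\{(1,2),(1,3),(2,3)\}$ and $\gamma\in\{1,2,3\}$: nine cases.

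Each of the nine cases is then settled by a one-line evaluation from the explicit formula for $\varepsilon$, keeping the group addition modulo $4$; for instance, for $(\alpha,\beta,\gamma)=(1,2,1)$ the left side is $\varepsilon(1,3)\varepsilon(2,1)\varepsilon(2,1)=1$ and the right side is $\varepsilon(2,2)\varepsilon(1,1)\varepsilon(1,2)=(-1)(1)(-1)=1$, and similarly for the remaining eight. I do not expect any conceptual difficulty here; the only ``obstacle'' is the bookkeeping of the signs and of the sums modulo $4$ across the nine cases, which I would record in a compact table. As a consistency check one notes afterwards that the resulting coboundary $(f,\omega)$ is then a normalized abelian $3$-cocycle, with associated commutator $\omega(k,l)=\varepsilon(k,l)\varepsilon(l,k)^{-1}=(-1)^{kl(kl-1)/2}$ as already stated.
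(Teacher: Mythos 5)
Your proposal is correct and takes essentially the same route as the paper: a direct finite verification of $f(\alpha,\beta,\gamma)=f(\beta,\alpha,\gamma)$, reduced (after disposing of the trivial cases with a zero entry or $\alpha=\beta$) to the unordered pairs $\{1,2\},\{1,3\},\{2,3\}$ — the paper computes the ratio $f(k,l,m)/f(l,k,m)$ for these pairs with $m$ kept general via $\varepsilon(1,k)=(-1)^{\delta_{2,k}}$ etc., while you cancel the symmetric factor $\varepsilon(\alpha+\beta,\gamma)$ and check the resulting identity case by case in $\gamma$. The eight sign checks you leave as routine do all hold, so there is no gap.
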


\begin{proof}
We have $\varepsilon(0,Q)=\varepsilon(Q,0)=1$.
Let $k,l,m$ be elements of $Q$.
We show $f(k,l,m)=f(l,k,m)$.
When $k=0$ or $l=0$, we have $f(k,l,m)=1=f(l,k,m)$.
Suppose $(k,l)=(1,2)$.
Then, 
\begin{eqnarray*}
\frac{f(1,2,m)}{f(2,1,m)}&=&\frac{\varepsilon(1,2+m)\varepsilon(2,m)\varepsilon(3,m)^{-1}\varepsilon(1,2)^{-1}}{\varepsilon(2,1+m)\varepsilon(1,m)\varepsilon(3,m)^{-1}\varepsilon(2,1)^{-1}}\\
&=&(-1)^{\delta_{0,m}+\delta_{2,m}+\delta_{3,m}-\delta_{1,m}-1-\delta_{1,m}-\delta_{2,m}-\delta_{2,m}+\delta_{1,m}}\\
&=&(-1)^{\delta_{0,m}+\delta_{2,m}+\delta_{3,m}-\delta_{1,m}-1}=1.
\end{eqnarray*}
Suppose $(k,l)=(3,2)$.
Then, 
\begin{eqnarray*}
\frac{f(3,2,m)}{f(2,3,m)}&=&\frac{\varepsilon(3,2+m)\varepsilon(2,m)\varepsilon(1,m)^{-1}\varepsilon(3,2)^{-1}}{\varepsilon(2,3+m)\varepsilon(3,m)\varepsilon(1,m)^{-1}\varepsilon(2,3)^{-1}}\\
&=&(-1)^{\delta_{3,m}+\delta_{2,m}+\delta_{3,m}-\delta_{2,m}-\delta_{0,m}-\delta_{3,m}-\delta_{1,m}+\delta_{2,m}-1}\\
&=&(-1)^{\delta_{3,m}+\delta_{2,m}-\delta_{0,m}-\delta_{1,m}-1}=1.
\end{eqnarray*}
Suppose $(k,l)=(1,3)$.
Then,
\begin{eqnarray*} 
\frac{f(1,3,m)}{f(3,1,m)}&=&\frac{\varepsilon(1,3+m)\varepsilon(3,m)\varepsilon(0,m)^{-1}\varepsilon(1,3)^{-1}}{\varepsilon(3,1+m)\varepsilon(1,m)\varepsilon(0,m)^{-1}\varepsilon(3,1)^{-1}}\\
&=&(-1)^{\delta_{3,m}+\delta_{1,m}-\delta_{0,m}-\delta_{2,m}-1}=1.
\end{eqnarray*}
Thus, $f(k,l,m)=f(l,k,m)$ for any $k,l,m\in Q$, which completes the proof.
\end{proof}

Consider the $\varepsilon$-modified quasi-GVA $F^\varepsilon$.
We have $\eta^\varepsilon(k,l)=\omega(k,l)\eta(k,l)=e^{\pi i k^2l^2/2}$.
Therefore,
\begin{eqnarray*}
\eta^\varepsilon(k,l)=\begin{cases} e^{\pi i/2}&\mbox{if}\ k,l\ \mbox{are odd},\\ 1 & \mbox{otherwise},\end{cases}
\end{eqnarray*}
for $k,l\in\mathbb{Z}_4$.
Hence, $\eta^\varepsilon$ induces the function $\eta^\varepsilon:\mathbb{Z}_2\times\mathbb{Z}_2\rightarrow \mathbb{C}^\times$,
and the pair $(F^\varepsilon,(\mathbb{Z}_2,\eta^\varepsilon))$ is a $\mathbb{Z}_2$-charged quasi-GVA
 with the $\mathbb{Z}_2$-grading $F^\varepsilon=(F^\varepsilon)^{0+2\mathbb{Z}}\oplus (F^\varepsilon)^{1+2\mathbb{Z}}$.

Thus, we have the following proposition (cf.\ \cite{DL}).

\begin{prop}\label{sec:propsce}
The $\varepsilon$-modified quasi-GVA $F^\varepsilon$ is isomorphic to the simple current extension $V_{A_1}\oplus V_{A_1+\alpha/2}$
of the lattice vertex algebra $V_{A_1}$,
thus isomophic to the simple current extension $\bar{V}=V_1(A_1)\oplus V_1(A_1;\alpha/2)$ of the simple level one affine vertex algebra
$V_1(A_1)$.
\end{prop}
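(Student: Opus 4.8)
The plan is to identify $F^\varepsilon$ sector by sector with the $\mathbb{Z}_2$ simple current extension of $V_{A_1}$ from Appendix~B, and then to conclude by the essential uniqueness of such an extension. I would begin from what is already in place: $F=V_{A_1^\circ}$ is a $\mathbb{Z}_4$-charged GVA whose even part $F^{2\mathbb{Z}}$ is isomorphic as a VOA to $V_{A_1}$ and whose odd part $F^{2\mathbb{Z}+1}$ is isomorphic as a $V_{A_1}$-module to $V_{A_1+\alpha/2}$, a simple current with $V_{A_1+\alpha/2}\boxtimes V_{A_1+\alpha/2}=V_{A_1}$; and, as noted just above, since $\eta^\varepsilon(k,l)=1$ whenever $k$ or $l$ is even, $(F^\varepsilon,(\mathbb{Z}_2,\eta^\varepsilon))$ is a $\mathbb{Z}_2$-charged quasi-GVA on the same underlying space with the coarser grading $F^\varepsilon=(F^\varepsilon)^{0+2\mathbb{Z}}\oplus(F^\varepsilon)^{1+2\mathbb{Z}}$.

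Next I would dissect $Y^\varepsilon$ along this $\mathbb{Z}_2$-grading into its even--even, even--odd, odd--even and odd--odd components. On the even part $Y^\varepsilon$ is $X$ twisted by the restriction of $\varepsilon$ to $\{0,2\}\subset\mathbb{Z}_4$, which is the coboundary of the $1$-cochain $0\mapsto1,\ 2\mapsto i$; twisting that away by the associated rescaling $v\mapsto\tau(\deg v)^{-1}v$ gives a VOA isomorphism $(F^\varepsilon)^{0+2\mathbb{Z}}\cong F^{2\mathbb{Z}}\cong V_{A_1}$ (and $\eta^\varepsilon$ is identically $1$ on the even charges, so $(F^\varepsilon)^{0+2\mathbb{Z}}$ is genuinely a vertex algebra). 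The even--odd and odd--even components then make $(F^\varepsilon)^{1+2\mathbb{Z}}$ an irreducible module over the VOA $(F^\varepsilon)^{0+2\mathbb{Z}}\cong V_{A_1}$ with conformal weights in $\tfrac14+\mathbb{Z}$, hence isomorphic to $V_{A_1+\alpha/2}$, and the odd--odd component of $Y^\varepsilon$ is then a nonzero intertwining operator of type $\begin{pmatrix}V_{A_1}\\ V_{A_1+\alpha/2}\ V_{A_1+\alpha/2}\end{pmatrix}$. Its locality bound is controlled by $\Delta^\varepsilon(1,1)$; from $\eta^\varepsilon(1,1)^2=e^{\pi i}=-1$ one gets $\Delta^\varepsilon(1,1)=\tfrac12+\mathbb{Z}$, in agreement with $z^{-1/2}Y^\varepsilon(e^{\alpha/2},z)e^{\alpha/2}|_{z=0}=e^\alpha\neq0$ and, via Proposition~\ref{sec:propvir} with $t=-\tfrac12$, with $\Omega(1,1)=e^{-\pi i t}=e^{\pi i/2}=\eta^\varepsilon(1,1)$.

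Putting this together, $F^\varepsilon$ is up to isomorphism a $\mathbb{Z}_2$-charged quasi-GVA whose even part is $V_{A_1}$, whose odd part is the simple current $V_{A_1+\alpha/2}$, and whose odd--odd operator is a nonzero intertwining operator with the stated locality bound --- exactly the data defining the $\mathbb{Z}_2$ simple current extension $V_{A_1}\oplus V_{A_1+\alpha/2}$ of Appendix~B. Since the space of intertwining operators of that type is one dimensional, the extension is unique up to isomorphism (cf.~\cite{DL}) and the one remaining scalar can be absorbed into an automorphism of the module $V_{A_1+\alpha/2}$, so $F^\varepsilon\cong V_{A_1}\oplus V_{A_1+\alpha/2}$. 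As $V_{A_1}=V_1(A_1)$ and $V_{A_1+\alpha/2}=V_1(A_1;\alpha/2)$ by the Frenkel--Kac construction, this gives $F^\varepsilon\cong\bar V=V_1(A_1)\oplus V_1(A_1;\alpha/2)$, as asserted.

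The hard part will be the cocycle bookkeeping behind the middle paragraph: one must check that the single $\mathbb{Z}_4$-level cochain $\varepsilon$, restricted to each of the four sectors, simultaneously produces the standard cocycle making the even part $V_{A_1}$, a compatible module action realizing $V_{A_1+\alpha/2}$, and a correctly normalized intertwining operator, so that the residual scalar is pinned down compatibly with the vacuum axiom and with the value of $\Omega(1,1)$ forced by Proposition~\ref{sec:propvir}. A more computational alternative, in keeping with the stated aim of obtaining the vertex operators in closed form, would be to bypass the uniqueness argument and instead write $X(e^\beta,z)$ out explicitly on each sector and match the four components of the two vertex operators directly, as in \cite{DL}.
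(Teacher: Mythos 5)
Your proposal is correct and follows essentially the same route as the paper: the paper treats Proposition \ref{sec:propsce} as an immediate consequence of the preceding construction (the quasi $2$-cocycle check, the computation $\eta^\varepsilon(k,l)=e^{\pi i k^2l^2/2}$ descending to $\mathbb{Z}_2$, and the identifications $F^{2\mathbb{Z}}\cong V_{A_1}$, $F^{2\mathbb{Z}+1}\cong V_{A_1+\alpha/2}$), citing \cite{DL} for the identification with the simple current extension. You merely make explicit what the paper leaves implicit --- the sector-by-sector matching of $Y^\varepsilon$ with $Y$, $Y_M$, $Y_M^*$, $I$, the coboundary rescalings, and the uniqueness of the extension via one-dimensionality of the intertwiner space --- and your consistency check $\Omega(1,1)=e^{\pi i/2}=\eta^\varepsilon(1,1)$ via Proposition \ref{sec:propvir} with $t=-1/2$ is accurate.
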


\section{Appendix C. Some vertex operator algebras and generalized vertex algebras}\label{sec:appendc}

\subsection{The affine vertex operator algebras}
\label{sec:affine}

In this section, we recall the affine vertex operator algebras (cf.\ \cite{Kac}).

Let $\mathfrak{g}$ be a rank $l$ finite dimensional simple Lie algebra.
Let $\mathfrak{h}$ be a Cartan subalgebra of $\mathfrak{g}$ with the simple roots $\alpha_1,\ldots,\alpha_l$.
Consider the normalized invariant bilinear form $(\cdot|\cdot):\mathfrak{g}\times \mathfrak{g}\rightarrow \mathbb{C}$ such that $(\alpha^\vee|\alpha^\vee)=2$ for each long root $\alpha$.
Consider the affine Kac-Moody Lie algebra $\widehat{\mathfrak{g}}=\mathfrak{g}\otimes \mathbb{C}[t,t^{-1}]\oplus \mathbb{C}K\oplus \mathbb{C}D$
with the central element $K$, degree operator $D$, Lie bracket $[D,a\otimes t^n]=na\otimes t^n$ and 
$[a\otimes t^n,b\otimes t^m]=[a,b]t^{n+m}+n\delta_{n+m,0}(a|b)K$ and
simple roots $\alpha_0,\alpha_1,\ldots,\alpha_l$ and fundamental weights $\Lambda_0,\Lambda_1,\ldots,\Lambda_l$.
Set $a(n)=a\otimes t^n$ ($a\in \mathfrak{g}$, $n\in\mathbb{Z}$).
Set $\widehat{\mathfrak{g}}_-=\mathfrak{g}\otimes \mathbb{C}[t^{-1}]t^{-1}$.

Let $k$ be a complex number.
Consider the highest weight Verma module $M(k\Lambda_0)$ over $\widehat{\mathfrak{g}}$ of highest weight $k\Lambda_0$.
As a $\widehat{\mathfrak{g}}_-$-module, $M(k\Lambda_0)$ is isomorphic to $\mathrm{S}(\widehat{\mathfrak{g}}_-)$.
Here, $\mathrm{S}(V)$ denotes the symmetric algebra of $V$ for each vector space $V$.
Then, $M(k\Lambda_0)\cong \mathrm{S}(\widehat{\mathfrak{g}}_-)$ equips with the compatible vertex algebra structure
with the vacuum vector $|0\rangle=1$,
translation operator $\partial$ defined by $\partial(v\otimes t^n)=-nv\otimes t^{n-1}$ with derivation,
 vertex operator $Y(\cdot,z)$
defined by the assignment $Y(a\otimes t^{-1},z)=\sum_{n\in \mathbb{Z}} a\otimes t^n z^{-n-1}$ and
 extending it by the reconstruction theorem.
The vertex algebra $M(k\Lambda_0)$ has the weight grading induced by $-D$
and {\it Segal-Sugawara conformal vector}
\[
\omega^{\mathrm{aff}}=\frac{1}{2(k+h^\vee)}\sum_{i\in A} v_i(-1)v^i
\]
of central charge  $c=k \dim \mathfrak{g}/(k+h^\vee)$.
Here, $\{v_i\}_{i\in A}$ is a basis of $\mathfrak{g}$ with the index set $A$, and $\{v^i\}_{i\in A}$ the dual-basis
with respect to $(\cdot|\cdot)$.
The VOA $M(k\Lambda_0)$ is called the {\it (universal) affine vertex operator algebra} and 
denoted by $V^k(\mathfrak{g})$.
Note that the weight $1$ subspace $V^k(\mathfrak{g})_1=\{a\otimes t^{-1}|a\in \mathfrak{g}\}$ with the Lie bracket $[a\otimes t^{-1},b\otimes t^{-1}]:=a(0)b=[a,b]$ ($a,b\in\mathfrak{g}$) is isomorphic to 
the Lie algebra $\mathfrak{g}$ via $a\mapsto a\otimes t^{-1}$.
We denote the simple quotient by $V_k(\mathfrak{g})$ and call it the {\it (simple) affine vertex operator algebra}.
Note that as a $\widehat{\mathfrak{g}}$-module, $V_k(\mathfrak{g})$ is isomorphic to
the irreducible highest weight module $L(k\Lambda_0)$.
It is well-known that $V_k(\mathfrak{g})$ is $C_2$-cofinite and rational if and only if $k$ is a positive integer.

\subsection{The lattice generalized vertex algebras associated with the rational lattices and the lattice vertex algebras
associated with the even integral lattices}\label{sec:lattice}

In this section, we recall the lattice GVAs \cite{DL,BK} associated with the rational lattices and the lattice vertex algebras
associated with the even integral lattices.

Let $L=\mathbb{Z}^l$ be a rank $l$ rational lattice with the $\mathbb{Z}$-bilinear form $(\cdot|\cdot):L\times L\rightarrow \mathbb{Q}$.
Consider the vector space $\mathfrak{h}=\mathbb{C}\otimes_\mathbb{Z} L$ with the $\mathbb{C}$-bilinear form $(\cdot|\cdot):\mathfrak{h}\times \mathfrak{h}\rightarrow \mathbb{C}$ defined by linearly extending the $\mathbb{Z}$-bilinear form $(\cdot|\cdot)$.

Let $\widehat{\mathfrak{h}}=\mathfrak{h}\otimes \mathbb{C}[t,t^{-1}]\oplus \mathbb{C}K$ denote the Heisenberg Lie algebra
with the central element $K$ and Lie bracket $[a\otimes t^n,b\otimes b^m]=n\delta_{n+m,0}(a|b)K$.
Set $a(n)=a\otimes t^n$ ($a\in \mathfrak{h}$, $n\in\mathbb{Z}$).
Set $\widehat{\mathfrak{h}}_+=\mathfrak{h}\otimes \mathbb{C}[t]\oplus \mathbb{C}K$ and
 $\widehat{\mathfrak{h}}_-=\mathfrak{h}\otimes \mathbb{C}[t^{-1}]t^{-1}$.
Let $\mathbb{C}_1$ denote the $1$-dimensional module of $\widehat{\mathfrak{h}}_+$ with 
$a\otimes t^n. v=0$ and $K. v=v$ ($a\in \mathfrak{h}$, $n\geq 0$, $v\in \mathbb{C}_1$).
Let $M(1)=\widehat{\mathfrak{h}}\otimes_{\widehat{\mathfrak{h}}_+}\mathbb{C}_1$
denote the induced module of $\widehat{\mathfrak{h}}$.
As vector spaces, $M(1)\cong \mathrm{S}(\widehat{\mathfrak{h}}_-)$.
Here, $\mathrm{S}(V)$ denotes the symmetric algebra of $V$ for each vector space $V$.
Then, $M(1)$ has the vertex algebra structure ({\it Heisenberg vertex algebra}) with the
vacuum vector $1=1\otimes 1$, translation operator $T$ with $T(v\otimes t^n)=-n v\otimes t^{n-1}$ and vertex operator
$X(\cdot,z)$ defined by the assignment
$X(v\otimes t^{-1},z)=\sum_{n\in \mathbb{Z}}v\otimes t^n z^{-n-1}$
and extending it to $M(1)$ by the reconstruction theorem (cf.\ \cite{FKRW,Kac2,FBZ}).
We equip $M(1)$ with the conformal vector $\omega=(1/2)\sum_{i=1}^l v_i(-1)v^i$ of central charge $l$.
Here, $\{v_i\}_{i=1,\ldots,l}$ is a basis of $\mathfrak{h}$, and $\{v^i\}_{i=1,\ldots,l}$ the dual basis of $\{v_i\}$.

Let 
$
\mathbb{C}[L]=\bigoplus_{\alpha\in L} \mathbb{C}e^\alpha
$
 denote the group algebra of $L$ with $e^\alpha \cdot e^\beta=e^{\alpha+\beta}$.
We consider the space $\mathbb{C}e^\alpha$ as the module over $\widehat{\mathfrak{h}}_+$ with
 $a\otimes t^0.e^\alpha=(a|\alpha)e^\alpha$, $a\otimes t^n. e^\alpha=0$ ($a\in \widehat{\mathfrak{h}}$, $n> 0$) and 
$K.e^\alpha=e^\alpha$.
Then, $M(1)\otimes \mathbb{C}e^\alpha$ has the module structure over the vertex algebra $M(1)$.
Consider the $L$-charged extended generalized vertex algebra 
\[
V_L=\bigoplus_{\alpha\in L}M(1)\otimes \mathbb{C}e^\alpha=M(1)\otimes \mathbb{C}[L]\cong \mathrm{S}(\widehat{\mathfrak{h}}_-)\otimes \mathbb{C}[L],
\]
 with the vacuum vector $|0\rangle=1\otimes e^0$,
translation operator $\partial$ with $\partial(e^\alpha)=\alpha{(-1)}e^\alpha$ and the vertex operator $X(\cdot,z)$ defined
by the assignment
\[
X(e^\alpha,z)=\mathrm{exp}\left(\sum_{n<0}\frac{1}{-n}\alpha_{n}z^{-n}\right)\mathrm{exp}\left(\sum_{n>0}\frac{1}{-n}\alpha_{n}z^{-n}\right)\otimes e^\alpha z^\alpha,
\]
($\alpha\in L$) and extending it to $V_L$ by the reconstruction theorem.
Here, $z^\alpha$ is defined by linearly extending the assignment $z^\alpha e^\beta=z^{(\alpha|\beta)}e^\beta$ ($\alpha,\beta\in L$).
We call $V_L$ the {\it lattice generalized vertex algebra} (lattice GVA) associated with $L$.

Suppose that $L$ is {\it even integral}, that is, $(\cdot|\cdot):L\times L\rightarrow \mathbb{Z}$ and $(\alpha|\alpha)\in 2\mathbb{Z}$ for each $\alpha\in L$.
Consider a $2$-cocycle $\varepsilon:L\times L\rightarrow \{\pm 1\}$ satisfying $\varepsilon(\alpha,\beta)\varepsilon(\beta,\alpha)=(-1)^{(\alpha|\beta)}$.
Then, the $\varepsilon$-modified GVA $V_L^\varepsilon$ with the vertex operator $Y(\cdot,z)=X^\varepsilon(\cdot,z)$
is a vertex algebra and called the {\it lattice vertex algebra}.
We denote $V_L=V_L^\varepsilon$.
We consider $V_L$ as a vertex operator algebra with the conformal vector $\omega\otimes e^0$ of central charge $l$, and call $V_L$
a {\it lattice vertex operator algebra}.

Let $\mathfrak{g}$ be a finite dimensional simple Lie algebra.
Suppose that $\mathfrak{g}$ is simply-laced (of ADE-type).
Note that the lattice vertex operator algebra $V_Q$ associated with the root lattice $Q=Q(\mathfrak{g})$ of $\mathfrak{g}$
is isomorphic to the level one affine VOA $V_1(\mathfrak{g})$.

\section{Appendix D. General theorems for the abelian intertwining algebras}
\label{sec:appendd}
\subsection{Some computational lemmas on abelian cocycles}
\label{sec:computational}

We show some computational lemmas on abelian $3$-cocycles.

First, we show Remark \ref{sec:lem1}.

\begin{proof}[Proof of Remark \ref{sec:lem1}]
The map $f$ is a group homomorphism. We show $\mathrm{Ker}(f)=1$.
Let $(F,\Omega)$ be a normalized abelian $3$-cocycle such that $F(j,i,k)^{-1}\Omega(i,j)F(i,j,k)=1$ ($i,j,k\in Q$).
By (A2), we have $F(j,k,i)=\Omega(i,j+k)\Omega(i,k)^{-1}$. 
By letting $k=0$, by (A4) and (A5), we have $\Omega(i,j)=1$ for all $i,j\in Q$.
Therefore, $F(j,k,i)=1$ for all $i,j,k$.
Thus, $(F,\Omega)=1$.
\end{proof}
\end{rem}

\begin{lem}\label{sec:lem2}
\begin{eqnarray}\label{eqn:feq1}
F(i,j,k+l)^{-1}B(j,k,l)B(i,k,j+l)F(i,j,l)=B(i+j,k,l).\nonumber \\
\end{eqnarray}
\end{lem}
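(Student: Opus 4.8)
The plan is to reduce eq.\ (\ref{eqn:feq1}) to a relation among values of $F$ alone, and then to obtain that relation from the pentagon axiom (A1). First I would substitute the definition $B(a,b,c)=F(b,a,c)^{-1}\Omega(a,b)F(a,b,c)$ into both sides. On the left this produces the two factors $\Omega(j,k)$ (from $B(j,k,l)$) and $\Omega(i,k)$ (from $B(i,k,j+l)$); on the right it produces the single factor $\Omega(i+j,k)$. The point is that solving the hexagon axiom (A3) for that factor gives
\[
\Omega(i+j,k)=F(i,j,k)^{-1}\,\Omega(j,k)\,F(i,k,j)\,\Omega(i,k)\,F(k,i,j)^{-1},
\]
so that after this substitution $\Omega(j,k)$ and $\Omega(i,k)$ appear once on each side and cancel, everything lying in the abelian group $\mathbb{C}^\times$. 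What is left is the purely $F$-valued identity
\[
F(i,j,k+l)^{-1}F(k,j,l)^{-1}F(j,k,l)F(k,i,j+l)^{-1}F(i,k,j+l)F(i,j,l)=F(k,i+j,l)^{-1}F(i,j,k)^{-1}F(i,k,j)F(k,i,j)^{-1}F(i+j,k,l).
\]

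Then I would dismantle this relation by three successive uses of the pentagon (A1). Applying the pentagon for the quadruple $(i,j,k,l)$ to rewrite $F(i,j,k+l)^{-1}$ makes the factors $F(j,k,l)^{\pm1}$ cancel internally on the left and makes $F(i,j,k)^{\pm1}$, $F(i+j,k,l)^{\pm1}$ cancel between the two sides. Applying next the pentagon for $(k,i,j,l)$ to rewrite $F(k,i,j+l)^{-1}$ makes $F(i,j,l)^{\pm1}$ cancel internally and $F(k,i,j)^{\pm1}$, $F(k,i+j,l)^{\pm1}$ cancel between the two sides. The equality that remains is, after using the commutativity $j+k=k+j$, precisely the pentagon for the quadruple $(i,k,j,l)$. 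Thus eq.\ (\ref{eqn:feq1}) follows from (A3) together with three instances of (A1); the axioms (A2), (A4), (A5) are not needed.

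The only genuine difficulty is bookkeeping: each pentagon substitution introduces five $F$-values with composite arguments such as $F(i,j+k,l)$ and $F(k+i,j,l)$, and one has to track all of them carefully so that the advertised cancellations really occur and no argument is miscopied. Conceptually there is no obstacle once the $\Omega$-factors have been removed by (A3) — the leftover statement is a formal consequence of the associativity coherence — so the actual write-up mainly amounts to organizing the cancellations cleanly, for instance by collecting everything on one side and matching it term-by-term against the product of the three pentagons. It would be natural to isolate the $F$-only identity as a short preliminary computation, in the same spirit as the derivations of the hexagons (A2)–(A3) from the cocycle data.
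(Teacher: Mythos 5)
Your proposal is correct and follows essentially the same route as the paper: substituting the definition of $B$, eliminating the $\Omega$-factors via the hexagon (A3) applied to $\Omega(i+j,k)$, and verifying the remaining purely $F$-valued identity as a combination of the three pentagon instances (A1) for $(i,j,k,l)$, $(k,i,j,l)$ and $(i,k,j,l)$ (the paper multiplies the first two and divides by the third, which is the same computation you describe as sequential substitutions). No gaps.
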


\begin{proof}
By the definition of $B$, eq.\,(\ref{eqn:feq1}) is equivalent to
\begin{eqnarray*}
&&F(i,j,k+l)^{-1}F(k,j,l)^{-1}\Omega(j,k)F(j,k,l) F(k,i,j+l)^{-1}\Omega(i,k)\\
&&\quad\cdot F(i,k,j+l)F(i,j,l)=F(k,i+j,l)^{-1}
\Omega(i+j,k)F(i+j,k,l).
\end{eqnarray*}
By (A3), it is equivalent to
\begin{eqnarray}\label{eqn:feq2}
&&F(i,j,k)F(k,i,j)F(i,k,j)^{-1}F(i,j,k+l)^{-1}F(k,j,l)^{-1}\\
&&\quad\cdot F(j,k,l) F(k,i,j+l)^{-1}
F(i,k,j+l)F(i,j,l)\nonumber \\
&&\quad=F(k,i+j,l)^{-1}F(i+j,k,l).\nonumber
\end{eqnarray}
Now, we show that eq.\,(\ref{eqn:feq2}) is true.
By (A1), we have three equations
\begin{eqnarray}\label{eqn:feq3}
F(i,j,k)F(i,j,k+l)^{-1}F(i,j+k,l)F(i+j,k,l)^{-1}F(j,k,l)=1,\nonumber \\
\end{eqnarray}
\begin{eqnarray}\label{eqn:feq4}
F(k,i,j)F(k,i,j+l)^{-1}F(k,i+j,l)F(k+i,j,l)^{-1}F(i,j,l)=1,\nonumber \\
\end{eqnarray}
\begin{eqnarray}\label{eqn:feq5}
F(i,k,j)F(i,k,j+l)^{-1}F(i,k+j,l)F(i+k,j,l)^{-1}F(k,j,l)=1.\nonumber \\
\end{eqnarray}
Then, eq.\,(\ref{eqn:feq2}) is equivalent to eq.\,($(\ref{eqn:feq3})\times (\ref{eqn:feq4})/(\ref{eqn:feq5})$).
Thus, we have the lemma.
\end{proof}

\begin{lem}\label{sec:lem3}
\begin{enumerate}
\item Let $(F,\Omega)$ be a normalized abelian $3$-cocycle, and supporse $F(i,j,k)=1$ for any $i,j,k\in Q$. Then, $\Omega$ is bimultiplicative.

\item Conversely, if the function $\eta:Q\times Q\rightarrow \mathbb{C}^\times$ is bimultiplicative,
then the pair $(1,\eta)$ is a normalized abelian $3$-cocycle.

\item Let $(F,\Omega)$ be a normalized abelian $3$-cocycle, and suppose $F(i,j,k)=F(j,i,k)$ for any $i,j,k\in Q$.
Then,
\begin{itemize}
\item[(a)] $F(i,j,k)=\Omega(i+j,k)^{-1}\Omega(i,k)\Omega(j,k)$, 
\item[(b)] $F(i,j,k)$ is multiplicative in $k$.
\end{itemize}

\item Conversely, if $\eta:Q\times Q\rightarrow \mathbb{C}^\times$ is a function such that
\begin{itemize} 
\item[(a)] $\mu(i,j,k):=\eta(i+j,k)^{-1}\eta(i,k)\eta(j,k)$ is multiplicative in $k$,
\item[(b)] there exists the bilinear map $\Delta:Q\times Q\rightarrow \mathbb{C}/\mathbb{Z}$ such that $\eta(i,j)\eta(j,i)=e^{-2\pi i \Delta(i,j)}$,
\item[(c)] $\eta(Q,0)=\eta(0,Q)=1$,
\end{itemize}
then $(\mu,\eta)$ is a normalized abelian $3$-cocycle such that $\mu(i,j,k)=\mu(j,i,k)$.
\end{enumerate}
\end{lem}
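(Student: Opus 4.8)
The plan is to read the four parts as two converse pairs and to reduce everything to one structural observation. For an arbitrary function $\eta\colon Q\times Q\rightarrow\mathbb{C}^\times$ and each fixed $t\in Q$, write $\phi_t(a)=\eta(a,t)$ and set $\mu_\eta(a,b;t)=\eta(a+b,t)^{-1}\eta(a,t)\eta(b,t)=\phi_t(a)\phi_t(b)\phi_t(a+b)^{-1}$. The first thing I would record are two ``free'' facts: since $\mu_\eta(\,\cdot\,,\,\cdot\,;t)$ is the $2$-coboundary of $\phi_t$, it is symmetric in its first two arguments, and it satisfies, for every $t$, the ordinary $2$-cocycle identity $\mu_\eta(i,j;t)\,\mu_\eta(i+j,k;t)=\mu_\eta(j,k;t)\,\mu_\eta(i,j+k;t)$. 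These cost nothing and will absorb most of (A1)--(A5).

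For part (1) I would simply set $F\equiv 1$ in (A2) and (A3): they become $\Omega(i,j+k)=\Omega(i,j)\Omega(i,k)$ and $\Omega(i+j,k)=\Omega(i,k)\Omega(j,k)$, so $\Omega$ is bimultiplicative. For the converse, part (2), bimultiplicativity of $\eta$ used with a zero argument forces $\eta(i,0)=\eta(0,i)=1$, giving (A4) and (A5); then (A1) is the trivial identity $1=1$, and (A2), (A3) are precisely the two halves of bimultiplicativity, so $(1,\eta)$ is a normalized abelian $3$-cocycle.

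For part (3)(a) I would rewrite the hypothesis $F(i,j,k)=F(j,i,k)$ in the form $F(k,i,j)=F(i,k,j)$, so that in (A3) these factors cancel between the two sides, leaving $F(i,j,k)\,\Omega(i+j,k)=\Omega(i,k)\Omega(j,k)$, i.e.\ $F(i,j,k)=\mu_\Omega(i,j;k)$ (visibly symmetric in $i,j$, a useful check). For part (3)(b), I would substitute this into (A1), expand each factor through $\phi_t(a)=\Omega(a,t)$, and watch the product telescope down to $\mu_\Omega(i,j;k)\,\mu_\Omega(i,j;l)\,\mu_\Omega(i,j;k+l)^{-1}$; since (A1) says this equals $1$, we get $F(i,j,k+l)=F(i,j,k)\,F(i,j,l)$. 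For the converse, part (4): $\mu(i,j,k)=\mu(j,i,k)$ is immediate from the defining formula; (A4) and (A5) follow from the normalization (c); using the symmetry of $\mu$, (A3) collapses to the trivial $\mu(k,i,j)=\mu(i,k,j)$, and (A1), after expanding and simplifying, reduces exactly to the multiplicativity of $\mu$ in its last slot, which is hypothesis (a). The only axiom genuinely using hypothesis (b) is (A2): cancelling the $\mu$-factors (again by symmetry) reduces it to $\mu(j,k,i)=\eta(i,j+k)\,\eta(i,j)^{-1}\eta(i,k)^{-1}$, and expanding $\mu(j,k,i)=\eta(j+k,i)^{-1}\eta(j,i)\eta(k,i)$ turns this into $\eta(j+k,i)\,\eta(i,j+k)=\bigl(\eta(j,i)\eta(i,j)\bigr)\bigl(\eta(k,i)\eta(i,k)\bigr)$, which by (b) is $e^{-2\pi i\Delta(j+k,i)}=e^{-2\pi i\Delta(j,i)}\,e^{-2\pi i\Delta(k,i)}$, true because $\Delta$ is bilinear.

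I do not expect a real obstacle: the argument is entirely bookkeeping with (A1)--(A5). The one step deserving genuine care is (A2) in part (4), since it is the only place the commutativity constraint (b) is actually used, and one must check that the identity holds on the nose in $\mathbb{C}^\times$ and not merely after passing to $\mathbb{C}/\mathbb{Z}$ --- bilinearity of $\Delta$ is precisely what guarantees this. I would also write the telescoping of (A1) in parts (3)(b) and (4) out once in full, as it is the only computation that is not wholly mechanical.
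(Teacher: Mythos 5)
Your proposal is correct. Note that the paper itself gives no argument here (it states ``The proof is omitted''), so there is nothing to compare against; your write-up in effect supplies the missing verification. I checked the details: parts (1)--(2) are exactly the specialization of (A2)--(A3) at $F\equiv 1$ together with the trivial zero-argument facts; in part (3), symmetry in the first two slots makes the factors $F(k,i,j)$ and $F(i,k,j)$ in (A3) cancel, giving (a), and your key structural observation --- that for fixed last argument $t$ the expression $\mu(\,\cdot\,,\,\cdot\,;t)$ is the $2$-coboundary of $\phi_t(a)=\eta(a,t)$, hence symmetric and satisfying the ordinary $2$-cocycle identity --- correctly collapses (A1) to $F(i,j,k)F(i,j,l)F(i,j,k+l)^{-1}=1$, which is (b); the same identity, run in reverse, gives (A1) in part (4), where (A3) reduces to the symmetry $\mu(k,i,j)=\mu(i,k,j)$ and (A2), after cancelling the symmetric $\mu$-factors, reduces to $\eta(j+k,i)\eta(i,j+k)=\eta(j,i)\eta(i,j)\,\eta(k,i)\eta(i,k)$, which is exactly bilinearity of $\Delta$ pushed through the homomorphism $e^{-2\pi i(\cdot)}:\mathbb{C}/\mathbb{Z}\rightarrow\mathbb{C}^\times$, so the identity does hold on the nose in $\mathbb{C}^\times$ as you note. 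The only stylistic quibble is in part (2): (A4) is automatic from $F\equiv 1$ and does not need the zero-argument computation, which is only required for (A5); this is harmless.
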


The proof is omitted.

Let $Q$ be an abelian group and $(F,\Omega)$ a normalized abelian $3$-cocycle.

\begin{lem}\label{sec:lem4}
If $Q=\mathbb{Z}_2$, then $F(i,j,k)=F(j,i,k)$ for any $i,j,k\in Q$.
\end{lem}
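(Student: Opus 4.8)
The plan is a short case analysis that leans entirely on the normalization axiom (A4). First I would record the consequence of (A4) that matters here: $F(0,i,j)=F(i,0,j)=F(i,j,0)=1$ for all $i,j\in Q$, so $F$ is forced to equal $1$ as soon as any one of its three arguments vanishes. On $Q=\mathbb{Z}_2=\{0,1\}$ this already pins down $F$ on every triple except $(1,1,1)$, where the only constraint is whatever (A1)--(A5) impose; we will not even need to analyze that value.

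Then, given $i,j,k\in Q$, I would split into two cases. If $i=j$, the expressions $F(i,j,k)$ and $F(j,i,k)$ are literally the same, so there is nothing to check. If $i\neq j$, then one of $i,j$ equals $0$; say $i=0$ (the case $j=0$ being symmetric). Then $F(i,j,k)=F(0,j,k)=1$ by (A4) and $F(j,i,k)=F(j,0,k)=1$ by (A4), so the two values agree. Hence $F(i,j,k)=F(j,i,k)$ in all cases, which is the claim.

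I do not anticipate any genuine obstacle here: the statement is essentially built into the normalization, since the single potentially nontrivial value $F(1,1,1)$ is manifestly unchanged when its first two arguments are swapped, and every other value of $F$ on $\mathbb{Z}_2$ is trivial. The only thing to be careful about is invoking (A4) in the correct guise (argument $0$ in any of the three slots), not just in the middle slot. This lemma is precisely what is needed, together with Lemma \ref{sec:lem3}(3), to justify Lemma \ref{sec:lemz2}(3), i.e.\ that any $\mathbb{Z}_2$-charged AIA is automatically a $\mathbb{Z}_2$-charged quasi-GVA with $\mu_\eta=F$.
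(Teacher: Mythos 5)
Your proof is correct and coincides with the paper's own argument: the paper simply notes that by (A4), $F(1,0,k)=F(0,1,k)=1$, which is exactly your case $i\neq j$, the case $i=j$ being trivial. Nothing further is needed.
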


\begin{proof}
By (A4), we have $F(1,0,k)=F(0,1,k)=1$.
\end{proof}

\subsection{Some theorems for the abelian intertwining algebras}
\label{sec:aia}

We recall some standard theorems for the AIAs.
We generalize the method used in \cite{BK}.

Let $Q$ be an abelian group and $(F,\Omega)$ a normalized abelian $3$-cocycle with maps $B$ and $\Delta$.
Let $V=\bigoplus_{\alpha\in Q} V^\alpha$ be a $Q$-graded vector space with a translation operator $\partial:V\rightarrow V$ and a vacuum vector $|0\rangle\in V$ such that $\partial|0\rangle=0$.

Let $n$ be a positive integers.

\begin{dfn}(\cite{BK})
 An {\it $n$-field} with {\it charge} ($\alpha_1,\ldots,\alpha_n)\in Q^{\times n}$ on $V$
is a formal series $a(z_1,\ldots,z_n)\in (\mathrm{End} (V)) [[z_1,z_1^{\mathbb{C}},\ldots,z_n,z_n^{\mathbb{C}}]]$
with the property that
\begin{eqnarray*}
&&a(z_1,\ldots,z_n)b \\
&&\quad \in V^{\alpha_1+\cdots+\alpha_n+\beta}[[z_1,\ldots,z_n]]z_1^{-\Delta(\alpha_1,\alpha_2+\cdots+\alpha_n+\beta)}\cdots z_n^{-\Delta(\alpha_n,\beta)}
\end{eqnarray*}
for $b\in V^\beta$.
We call $\alpha=\alpha_1+\cdots+\alpha_n$ the {\it total charge} of $a(\mathbf{z})=a(z_1,\ldots,z_m)$.
\end{dfn}
Let $m,n$ be positive integers.
Let $a(z_1,\ldots,z_m)$ be an $m$-field with total charge $\alpha\in Q$ and $b(w_1,\ldots,w_n)$ an $n$-field with total charge $\beta\in Q$.

\begin{dfn}
The pair $(a,b)$ of fields
is called {\it local} if there exist $N_{ij}\in \Delta(\alpha_i,\beta_j)$ ($i=1,\ldots,m$, $j=1,\ldots,n$) such that for any $\gamma\in Q$ and $c\in V^\gamma$,
\begin{eqnarray*}\label{eqn:local1}
&&\left(\prod_{i=1}^m \prod_{j=1}^n \iota_{z_i,w_j}(z_i-w_j)^{N_{ij}}\right) a(z_1,\ldots,z_m)b(w_1,\ldots,w_n)c
=B(\alpha,\beta,\gamma)\\
&&\quad \cdot \left( \prod_{i=1}^m \prod_{j=1}^n \iota_{w_j,z_i}(z_i-w_j)^{N_{ij}}\right) b(w_1,\ldots,w_n)a(z_1,\ldots,z_m)c.
\end{eqnarray*}
We call such an $\{N_{ij}\}_{i,j}$ a system of {\it locality bounds} of $(a,b)$.
\end{dfn}

Let $m,n$ be positive integers.
Let $a(z_1,\ldots,z_m)$ and $b(z_{m+1},\ldots,z_{m+n})$ be an $m$-field and $n$-field with total charges $\alpha$ and $\beta$.
Suppose that the pair $(a,b)$ is local, and fix a system of locality bounds $\{N_{ij}\}_{i\in\{1,\ldots,m\}, j\in\{1,\ldots,n\}}$.

Define the $(m+n)$-field $F_{a,b}(z_1,\ldots,z_{m+n})$ by linearly extending the assignment
\begin{eqnarray*}
F_{a,b}(z_1,\ldots,z_{m+n})d &=& F(\alpha,\beta,\delta)^{-1}\left( \prod_{i=1}^{m} \prod_{j=1}^{n}\iota_{z_i,z_{m+j}}(z_i-z_{m+j})^{N_{ij}}\right)\\
&&\cdot a(z_1,\ldots,z_m)b(z_{m+1},\ldots,z_{m+n})d
\end{eqnarray*}
($\delta\in Q$, $d\in V^\delta$).
Note that the total charge of $F_{a,b}(\mathbf{z})$ is $\alpha+\beta$.
Note that if $a,b$ are translation covariant $\partial:V\rightarrow V$, then $F_{a,b}$ is translation covariant.

Let $c(w_1,\ldots,w_p)$ be a $p$-field with total charge $\gamma\in Q$.

\begin{lem}\label{sec:localf}
If the pairs $(a,c)$ and $(b,c)$ are local, then the pair $(F_{a,b},c)$ is local.
\end{lem}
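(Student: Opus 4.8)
The plan is to reduce the statement, by sliding the field $c$ all the way to the left past $a$ and $b$ one block of variables at a time, to the purely combinatorial identity of Lemma \ref{sec:lem2}. Write $\alpha,\beta,\gamma$ for the total charges of $a,b,c$. Fix systems of locality bounds $\{P_{ij}\}$ for $(a,c)$ ($1\le i\le m$, $1\le j\le p$) and $\{Q_{ij}\}$ for $(b,c)$ ($1\le i\le n$, $1\le j\le p$), and recall that $F_{a,b}$ was constructed from a fixed system $\{N_{ij}\}$ of locality bounds for $(a,b)$ ($1\le i\le m$, $1\le j\le n$). The natural candidate locality bounds for $(F_{a,b},c)$ are $M_{kj}:=P_{kj}$ for $k\le m$ and $M_{m+i,j}:=Q_{ij}$ for $1\le i\le n$; since the $k$-th charge of the $(m+n)$-field $F_{a,b}$ is $\alpha_k$ for $k\le m$ and $\beta_{k-m}$ for $k>m$, each $M_{kj}$ lies in the coset of $\Delta(\cdot,\gamma_j)$ required of a locality bound.

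First I would fix $\epsilon\in Q$ and $e\in V^\epsilon$ and abbreviate $\mathbf z_a=(z_1,\dots,z_m)$, $\mathbf z_b=(z_{m+1},\dots,z_{m+n})$, $\mathbf w=(w_1,\dots,w_p)$, working throughout in the common expansion region $\{|z_{m+j}|\}\ll\{|z_i|\}\ll\{|w_j|\}$, in which all the occurring $\iota$-operators commute with one another and, being multiplications by formal series in variables on which a given field does not act, commute with that field as an operator. Starting from the left-hand side $\big(\prod_{k,j}\iota_{z_k,w_j}(z_k-w_j)^{M_{kj}}\big)F_{a,b}(\mathbf z)c(\mathbf w)e$, I would: (i) substitute the defining formula for $F_{a,b}$, whose scalar prefactor is $F(\alpha,\beta,\gamma+\epsilon)^{-1}$ because it is applied to $c(\mathbf w)e$, a vector of charge $\gamma+\epsilon$; (ii) use the locality of $(b,c)$ to move $c(\mathbf w)$ past $b(\mathbf z_b)$, which at that point acts on $e$ (charge $\epsilon$), producing a factor $B(\beta,\gamma,\epsilon)$; (iii) use the locality of $(a,c)$ to move $c(\mathbf w)$ past $a(\mathbf z_a)$, which then acts on $b(\mathbf z_b)e$ (charge $\beta+\epsilon$), producing a factor $B(\alpha,\gamma,\beta+\epsilon)$. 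On the other side, expanding $c(\mathbf w)F_{a,b}(\mathbf z)e$ directly — here $F_{a,b}$ is applied to $e$, so its prefactor is $F(\alpha,\beta,\epsilon)^{-1}$ — and splitting $\prod_{k,j}\iota_{w_j,z_k}(z_k-w_j)^{M_{kj}}$ into its $P$- and $Q$-parts, one obtains the identical string of fields preceded by identical $\iota$-factors.

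Comparing the two reductions, the locality of $(F_{a,b},c)$ with the bounds $\{M_{kj}\}$ collapses to the scalar identity
\[
F(\alpha,\beta,\gamma+\epsilon)^{-1}\,B(\beta,\gamma,\epsilon)\,B(\alpha,\gamma,\beta+\epsilon)\,F(\alpha,\beta,\epsilon)=B(\alpha+\beta,\gamma,\epsilon),
\]
which is exactly Lemma \ref{sec:lem2} with $(i,j,k,l)=(\alpha,\beta,\gamma,\epsilon)$. The hard part is none of the algebra but the bookkeeping of the expansion operators: I would need to check carefully that after $c(\mathbf w)$ has been commuted to the far left the surviving $\iota$-factors on the two sides literally coincide — that the factors picked up from $(a,c)$- and $(b,c)$-locality are precisely the $\iota_{w_j,z_k}(z_k-w_j)^{M_{kj}}$ appearing on the right and that the $\iota_{z_i,z_{m+j}}(z_i-z_{m+j})^{N_{ij}}$ from the definition of $F_{a,b}$ pass through the repositioning of $c$ unchanged. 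This is what forces the choice $M_{kj}\in\{P_{\cdot j},Q_{\cdot j}\}$ together with the choice of expansion region; once those are pinned down, nothing but Lemma \ref{sec:lem2} remains. (Alternatively one could build $F_{a,b}$ up one variable at a time and induct, but the direct computation seems cleaner.)
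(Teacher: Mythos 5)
Your proposal is correct and follows essentially the same route as the paper's proof: the same candidate bounds (the union of the systems for $(a,c)$ and $(b,c)$), the same two applications of locality picking up $B(\beta,\gamma,\epsilon)$ and $B(\alpha,\gamma,\beta+\epsilon)$, and the same reduction of the surviving scalar factor to the cocycle identity of Lemma \ref{sec:lem2}. The bookkeeping of the $\iota$-expansions that you flag is handled in the paper exactly as you describe, by switching $\iota_{z_i,w_k}$ to $\iota_{w_k,z_i}$ on the commuted blocks while the $\iota_{z_i,z_{m+j}}(z_i-z_{m+j})^{N_{ij}}$ factors pass through untouched.
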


\begin{proof}
Suppose that the pairs $(a,c)$ and $(b,c)$ are local.
Let $\{M_{ik}\}_{i\in\{1,\ldots,m\},k\in \{1,\ldots,p\}}$and $\{M_{jk}\}_{j\in\{m+1,\ldots,m+n\},k\in\{1,\ldots,p\}}$ be systems of locality bounds of $(a,c)$ and $(b,c)$.
We show that $\{M_{ik}\}_{i\in\{1,\ldots,m+n\},k\in\{1,\ldots,p\}}$ is a system of locality bounds of
$(F_{a,b},c)$.
Let $\delta$ be an element of $Q$ and $d$ an element of $V^\delta$.
We have
\begin{eqnarray*}
&&\left(\prod_{i=1}^{m+n}\prod_{k=1}^p \iota_{z_i,w_k}(z_i,w_k)^{M_{ik}}\right)F_{a,b}(\mathbf{z})c(\mathbf{w})d\\
&=&F(\alpha,\beta,\gamma+\delta)^{-1}
\left(\prod_{i=1}^{m+n}\prod_{k=1}^p \iota_{z_i,w_k}(z_i,w_k)^{M_{ik}}\right)\\
&&\left(\prod_{i=1}^{m}\prod_{j=1}^n \iota_{z_i,z_{m+j}}(z_i,z_{m+j})^{N_{ij}}\right)
 a(z_1,\ldots,z_m)b(z_{m+1},\ldots,z_{m+n})c(\mathbf{w})d\\
&=&F(\alpha,\beta,\gamma+\delta)^{-1} B(\beta,\gamma,\delta)\left(\prod_{i=1}^{m}\prod_{k=1}^p \iota_{z_i,w_k}(z_i,w_k)^{M_{ik}}\right)\\
&&\left(\prod_{i=m+1}^{n}\prod_{k=1}^p \iota_{w_k,z_i}(z_i,w_k)^{M_{ik}}\right)
\left(\prod_{i=1}^{m}\prod_{j=1}^n \iota_{z_i,z_{m+j}}(z_i,z_{m+j})^{N_{ij}}\right)\\
&&\cdot a(z_1,\ldots,z_m)c(\mathbf{w})b(z_{m+1},\ldots,z_{m+n})d\\
&=&F(\alpha,\beta,\gamma+\delta)^{-1} B(\beta,\gamma,\delta) B(\alpha,\gamma,\beta+\delta)\\ &&\cdot \left(\prod_{i=1}^{m+n}\prod_{k=1}^p \iota_{w_k,z_i}(z_i,w_k)^{M_{ik}}\right)
 \left(\prod_{i=1}^{m}\prod_{j=1}^n \iota_{z_i,z_{m+j}}(z_i,z_{m+j})^{N_{ij}}\right)\\
&&\cdot c(\mathbf{w})a(z_1,\ldots,z_m)b(z_{m+1},\ldots,z_{m+n})d\\
&=&F(\alpha,\beta,\gamma+\delta)^{-1} B(\beta,\gamma,\delta) B(\alpha,\gamma,\beta+\delta)\cdot F(\alpha,\beta,\delta)\\
&&\cdot\left(\prod_{i=1}^{m+n}\prod_{k=1}^p \iota_{w_k,z_i}(z_i,w_k)^{M_{ik}}\right)c(\mathbf{w})F_{a,b}(\mathbf{z})d.
\end{eqnarray*}
Therefore, it suffices to show
\[
F(\alpha,\beta,\gamma+\delta)^{-1} B(\beta,\gamma,\delta) B(\alpha,\gamma,\beta+\delta) F(\alpha,\beta,\delta)=B(\alpha+\beta,\gamma,\delta).
\]
By Lemma \ref{sec:lem2}, we have the lemma.
\end{proof}

Let $m$ be a positive integer.
Let $A(z_1,\ldots,z_m)$ be an $m$-field.
Define $\iota_{z,w_1}\cdots\iota_{z,w_{m-1}} A(z+w_1,\ldots,z+w_{m-1},z)$ to be
\begin{eqnarray*}
&&\iota_{z,w_1}\cdots\iota_{z,w_{m-1}} A(z+w_1,\ldots,z+w_{m-1},z)v\\
&&\quad :=\mathrm{exp}(w_1\partial_{z_1}+\cdots+w_{m-1}\partial_{z_{m-1}})A(z_1,\ldots,z_m)v|_{z_1=\cdots=z_m=z}\\
&&\quad \in (V[[z]]z^\mathbb{C})[[w_1,\ldots,w_{m-1}]]
\end{eqnarray*}
($v\in V$).
We call this formal series a {\it operator product expansion (OPE)} of $A$.
Define the $1$-fields $\psi_{i(1),\ldots,i(m-1)}(z)$ ($i(1),\ldots,i(m-1)\in \mathbb{Z}_+$) by
the equality
\begin{eqnarray*}
&&\iota_{z,w_1}\cdots\iota_{z,w_{m-1}} A(z+w_1,\ldots,z+w_{m-1},z)\\
&&\quad =\sum_{i(1),\ldots,i(m-1)=0}^\infty
\psi_{i(1),\ldots,i(m-1)}(z)\, w^{i(1)}\cdots w^{i(m-1)}.
\end{eqnarray*}
Let $i(1),\ldots,i(m-1)$ be elements of $\mathbb{Z}_+$, and put $\psi(z)=\psi_{i(1),\ldots,i(m-1)}(z)$.
If $A$ is translation covariant, then the coefficient $\psi(z)$ is translation covariant.
Let $B$ be an $n$-field ($n\in\mathbb{Z}_{>0}$).
If the pair $(A,B)$ is local, then the pair $(\psi,B)$ is local.
Note that $\iota_{z,w_1}\cdots\iota_{z,w_{m-1}} A(z+w_1,\ldots,z+w_{m-1},z)|0\rangle$
is the Taylor series expansion of
\[
A(z+w_1,\ldots,z+w_{m-1},z)|0\rangle \in V[[z,w_1,\ldots,w_{m-1}]].
\]
Therefore, the linear span of all coefficients of $A(z_1,\ldots,z_m)|0\rangle$ coincides with
the linear span of all coefficients of $\psi_{i(1),\ldots,i(m-1)}(z)|0\rangle$ in $z$ ($i(1),\ldots,i(m-1)\in \mathbb{Z}_+$).

Let $a(z)$ be a translation covariant field with charge $\alpha\in Q$.
Let $b$ be an element of $V^\beta$ with $\beta\in Q$.
The series $Y(a,z)b$ has the form $a(z)b=\sum_{n=0}^\infty a{(-N-n-1)}b z^{N+n}$ with $N\in\Delta(\alpha,\beta)$.
Then, $f(z+e^{-\pi i}w)\in U[[z,w]](z+e^{-\pi i}w)^N$.
We have the following transposed Taylor's theorem.

\begin{lem}\label{sec:taylor}
As elements of $(V[[z]]z^{\Delta(\alpha,\beta)})[[w]]$,
\[
\iota_{z,w}(Y(a,z+e^{-\pi i}w)b)=e^{-w\partial}Y(a,z)e^{w\partial}b.
\]
\end{lem}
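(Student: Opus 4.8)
The plan is to show that both sides of the asserted identity, regarded as elements of $(V[[z]]z^{\Delta(\alpha,\beta)})[[w]]$, satisfy one and the same first-order formal differential equation in the variable $w$ with the same value at $w=0$, and then to conclude by the evident uniqueness of solutions of such an equation.

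First I would check that both sides genuinely lie in $(V[[z]]z^{\Delta(\alpha,\beta)})[[w]]$. By the observation recorded just before the lemma, $Y(a,z)b=\sum_{n\ge 0}a(-N-n-1)b\,z^{N+n}$ with $N\in\Delta(\alpha,\beta)$, so applying $\iota_{z,w}$ term by term and using $\iota_{z,w}(z+e^{-\pi i}w)^{N+n}=e^{-w\partial_z}z^{N+n}$ (where $\partial_z$ is the formal $z$-derivative) gives
\[
\iota_{z,w}\bigl(Y(a,z+e^{-\pi i}w)b\bigr)=e^{-w\partial_z}\bigl(Y(a,z)b\bigr);
\]
in particular the left-hand side lies in the stated space, reduces to $Y(a,z)b$ at $w=0$, and visibly satisfies $\partial_w\bigl(e^{-w\partial_z}(Y(a,z)b)\bigr)=-\partial_z\bigl(e^{-w\partial_z}(Y(a,z)b)\bigr)$. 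For the right-hand side, $e^{w\partial}b=\sum_{k\ge 0}\tfrac{w^k}{k!}\partial^k b$ with $\partial^k b\in V^\beta$ since $\partial$ preserves the $Q$-grading, so $Y(a,z)e^{w\partial}b$, and then $e^{-w\partial}Y(a,z)e^{w\partial}b$, again lie in $(V[[z]]z^{\Delta(\alpha,\beta)})[[w]]$ and equal $Y(a,z)b$ at $w=0$.

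Next I would differentiate the right-hand side $F(z,w):=e^{-w\partial}Y(a,z)e^{w\partial}b$ in $w$. Since $\partial$ commutes with $e^{-w\partial}$, the Leibniz rule gives
\[
\partial_w F=e^{-w\partial}\bigl(Y(a,z)\,\partial-\partial\,Y(a,z)\bigr)e^{w\partial}b=-e^{-w\partial}[\partial,Y(a,z)]e^{w\partial}b,
\]
and translation covariance $[\partial,Y(a,z)]=\partial_z Y(a,z)$, together with the fact that $e^{\pm w\partial}$ do not involve $z$, turns this into $-e^{-w\partial}\bigl(\partial_z Y(a,z)\bigr)e^{w\partial}b=-\partial_z F$. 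Thus both $F$ and the left-hand side satisfy $\partial_w(\cdot)=-\partial_z(\cdot)$ with the common initial value $Y(a,z)b$ at $w=0$.

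Finally I would invoke uniqueness: if $H(z,w)=\sum_{k\ge 0}H_k(z)w^k$ in $(V[[z]]z^{\Delta(\alpha,\beta)})[[w]]$ satisfies $\partial_w H=-\partial_z H$, then comparing coefficients of $w^k$ forces $(k+1)H_{k+1}=-\partial_z H_k$, hence $H_k=\tfrac{(-\partial_z)^k}{k!}H_0$, so $H$ is determined by $H_0=H|_{w=0}$. Applying this to $H=F$ and to $H=\iota_{z,w}(Y(a,z+e^{-\pi i}w)b)$ yields the equality of the lemma. The only delicate step is the bookkeeping in the reduction of the left-hand side to $e^{-w\partial_z}(Y(a,z)b)$ and the verification that every formal series involved stays in the stated space and that termwise differentiation and the substitution of the translation-covariance identity are legitimate there; I do not anticipate any genuine obstacle beyond this.
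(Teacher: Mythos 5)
Your proof is correct and takes essentially the same route as the paper: both reduce to the definition of $\iota_{z,w}$, which identifies the left side with $e^{-w\partial_z}\bigl(Y(a,z)b\bigr)$, and to translation covariance, which identifies the right side with the same series (the paper asserts the conjugation identity $e^{-w\partial}Y(a,z)e^{w\partial}b=e^{-w\partial_z}\bigl(Y(a,z)b\bigr)$ in one line, while you justify its exponentiated form by the first-order formal equation $\partial_w=-\partial_z$ together with uniqueness of solutions in $(V[[z]]z^{\Delta(\alpha,\beta)})[[w]]$). Incidentally, the intermediate expression $e^{w\partial_z}Y(a,z)b$ in the paper's proof appears to carry a sign typo; your $e^{-w\partial_z}\bigl(Y(a,z)b\bigr)$ is the version consistent with the definition of $\iota_{z,w}$.
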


\begin{proof}
By the translation covariance, the RHS is equal to $e^{w\partial_z}Y(a,z)b$, which is equal to the LHS.
\end{proof}

Let $\Phi$ be a system of fields such that
\begin{enumerate}
\item (locality) for any $\phi(z),\psi(z)\in \Phi$ with charges $\alpha,\beta\in Q$, the fields $\phi(z)$ and $\psi(z)$ are mutually $\eta(\alpha,\beta)$-local,
\item (translation covariance) every $\phi(z)\in \Phi$ is translation covariant, that is, $[\partial,\phi(z)]=\partial_z \phi(z)$,
\item (completeness) the coefficients of all formal series $\phi_1(z_1)\cdots \phi_n(z_n)|0\rangle$ ($n\in \mathbb{Z}_+$, $\phi_1,\ldots,\phi_n\in \Phi$)
span the vector space $V$.
\end{enumerate}

Let $S$ denote the vector space spanned by all translation covariant fields $\phi(z)$ such that
for any $\psi(z)\in \Phi$, the fields $\phi(z)$ and $\psi(z)$ are mutually local.
Since the elements of $S$ are translation covariant, by \cite[Proposition 2.2(c)]{BK}, we have the well-defined linear map 
\[
\Psi:S\rightarrow V, \quad \chi(z)\mapsto \chi(z)|0\rangle|_{z=0}.
\]

Let $\alpha$ be an element of $Q$ and $a$ an element of $V^\alpha$.

\begin{thm}\label{sec:statefield}
There exists the unique translation covariant field $Y(a,z)$ of charge $\alpha$ such that
for any field $\phi(z)\in \Phi$, the fields
$Y(a,z)$ and $\phi(z)$ is mutually local and $Y(a,z)|0\rangle|_{z=0}=a$.  
\end{thm}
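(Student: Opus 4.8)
## Proof Proposal for Theorem \ref{sec:statefield}

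\textbf{Overview.} The plan is to prove Theorem \ref{sec:statefield} by the standard two-step argument adapted to the abelian-cocycle setting: a Goddard-type uniqueness theorem, followed by an inductive construction of the field along the complete system $\Phi$ using the $F_{a,b}$-construction of Lemma \ref{sec:localf}. Uniqueness is what makes the construction canonical and linear, so I would prove it first.

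\textbf{Uniqueness.} Suppose $Y_1(a,z)$ and $Y_2(a,z)$ both satisfy the conclusion. Since locality with a given bound implies locality with every larger bound, after passing to a common bound the difference $\chi(z)=Y_1(a,z)-Y_2(a,z)$ is again a translation covariant field of charge $\alpha$ that is mutually local with every $\phi(z)\in\Phi$, and $\chi(z)|0\rangle$ is regular at $z=0$ with $\chi(z)|0\rangle|_{z=0}=a-a=0$. Translation covariance together with $\partial|0\rangle=0$ gives the creation ODE $\partial(\chi(z)|0\rangle)=\partial_z(\chi(z)|0\rangle)$, whose regular solution is $\chi(z)|0\rangle=e^{z\partial}\bigl(\chi(z)|0\rangle|_{z=0}\bigr)=0$. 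Then for any $\phi_1,\dots,\phi_n\in\Phi$ I move $\chi(z)$ to the right through $\phi_1(z_1),\dots,\phi_n(z_n)$ by locality: multiplying by the finitely many factors $\iota_{z,z_i}(z-z_i)^{N_i}$ needed,
\[
\Bigl(\prod_i\iota_{z,z_i}(z-z_i)^{N_i}\Bigr)\chi(z)\phi_1(z_1)\cdots\phi_n(z_n)|0\rangle=(\mathrm{const})\Bigl(\prod_i\iota_{z_i,z}(z-z_i)^{N_i}\Bigr)\phi_1(z_1)\cdots\phi_n(z_n)\chi(z)|0\rangle=0,
\]
and since each $\iota_{z,z_i}(z-z_i)^{N_i}$ is invertible in the relevant iterated Laurent completion, $\chi(z)$ annihilates every coefficient of $\phi_1(z_1)\cdots\phi_n(z_n)|0\rangle$. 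By the completeness condition on $\Phi$ these span $V$, so $\chi(z)=0$.

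\textbf{Existence.} I would construct $Y(a,z)$ by induction on the length of monomials $\phi_1(z_1)\cdots\phi_n(z_n)|0\rangle$, the inductive hypothesis at stage $n$ being that $Y(b,z)$ is defined with all required properties, and linearly, for every $b$ in the span of coefficients of monomials of length $<n$. For $n=0$ take $Y(|0\rangle,z)=\mathrm{id}_V$. For the step, given a coefficient $b$ of $\phi_2(z_2)\cdots\phi_n(z_n)|0\rangle$ with its field $Y(b,\cdot)$ (available by hypothesis), I form the two-field $F_{\phi_1,\,Y(b,\cdot)}(z_1,z)$ of Lemma \ref{sec:localf}, which is mutually local with all of $\Phi$ (by that lemma, whose underlying cocycle identity is Lemma \ref{sec:lem2}) and translation covariant because $\phi_1$ and $Y(b,\cdot)$ are; its OPE coefficients in $z_1=z+w$ are then translation covariant one-fields, still mutually local with $\Phi$. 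Using the creation property $Y(b,\cdot)|0\rangle=e^{z\partial}b$, the transposed Taylor theorem (Lemma \ref{sec:taylor}), and the normalization $F(\,\cdot,\cdot,0)=1$, one checks that $F_{\phi_1,\,Y(b,\cdot)}(z+w,z)|0\rangle$ is the Taylor expansion of $w^{N}\phi_1(z+w)e^{z\partial}b$, so that setting $z=0$ recovers (up to reindexing) the coefficients of $\phi_1(z_1)\cdots\phi_n(z_n)|0\rangle$; the corresponding OPE-coefficient fields thus realize those coefficients, and carry charge equal to the total charge of the monomial. By the uniqueness step already proved, the field attached to a given coefficient is independent of the chosen presentation, so $a\mapsto Y(a,z)$ is well defined and extends linearly (with a common locality bound on each finite sum) over the span of length-$n$ coefficients; since every vector of $V$ lies in such a span for some $n$, this defines $Y(a,z)$ for all $a\in V^\alpha$, and uniqueness yields the full statement.

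\textbf{Main obstacle.} The hard part is the bookkeeping in the inductive step: verifying that the OPE coefficients of $F_{\phi_1,\,Y(b,\cdot)}$ evaluated on the vacuum reproduce exactly the coefficients of $\phi_1(z_1)\cdots\phi_n(z_n)|0\rangle$. This requires carefully tracking the interplay of the $\iota_{z,w}$- versus $\iota_{w,z}$-expansions, the normalizing factor $F(\alpha,\beta,\delta)^{-1}$ in the definition of $F_{a,b}$, Lemma \ref{sec:taylor}, and the creation property, and confirming that the constructed field is genuinely mutually local with \emph{all} of $\Phi$ rather than with the finitely many $\phi_i$ used. The uniqueness half, by contrast, is routine once the invertibility of $\iota_{z,z_i}(z-z_i)^{N_i}$ in iterated Laurent series is available.
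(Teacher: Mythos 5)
Your proposal is correct, and its uniqueness half is essentially the paper's injectivity argument for the linear map $\Psi:S\rightarrow V$, $\chi(z)\mapsto \chi(z)|0\rangle|_{z=0}$: kill the vacuum via translation covariance (the regularity of $\chi(z)|0\rangle$ you assert is what the paper extracts from translation covariance, citing \cite[Proposition 2.2(c)]{BK}), then move $\chi(z)$ past monomials using locality and the invertibility of the $\iota$-expanded binomials, and conclude by completeness. Where you genuinely diverge is the existence half. The paper does not induct: for each monomial $\phi_1(z_1)\cdots\phi_m(z_m)|0\rangle$ it forms in one step the $m$-field $A(z_1,\ldots,z_m)$ (with the product of $B$-factors and all pairwise factors $\iota_{z_i,z_j}(z_i-z_j)^{N_{ij}}$), notes by Lemma \ref{sec:localf} that $A$ is local with every element of $\Phi$, takes the one-variable coefficients of its multi-variable OPE (these lie in $S$), and inverts the binomial factors to see that every coefficient of the monomial is in the image of $\Psi$; thus surjectivity is obtained directly, with no appeal to uniqueness, and the theorem is packaged as bijectivity of $\Psi$. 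You instead peel off one field at a time, using only the two-field construction $F_{\phi_1,\,Y(b,\cdot)}$ together with Lemma \ref{sec:taylor} and the normalization $F(\cdot,\cdot,0)=1$, and you must invoke uniqueness inside the existence proof to make the assignment well defined and linear. Both routes rest on the same key lemmas; yours confines the cocycle bookkeeping to two charges at a time, closer to the classical reconstruction argument, at the price of the induction and the well-definedness step, while the paper's $m$-field formulation handles all coefficients of a monomial simultaneously. If you write yours up, make two small points explicit: when extending linearly to a general $a\in V^\alpha$, use only (charge-homogeneous) coefficients of monomials of total charge $\alpha$, i.e. project any presentation of $a$ to charge $\alpha$; and the claim that the OPE coefficients of $F_{\phi_1,\,Y(b,\cdot)}$ recover \emph{all} coefficients of $\phi_1(w)b$ uses that $F_{\phi_1,\,Y(b,\cdot)}(z_1,z_2)|0\rangle$ is regular, which again follows from translation covariance exactly as in the paper's lemma on $\partial_{z_1}^k F_{a,b}(z_1,z_2)|_{z_1=z_2=z}$.
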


\begin{proof}

We show that $\Psi$ is an isomorphism of vector spaces.

{\bf (Surjectivity)}
Let $m$ be a positive integer.
Let $\phi_1(z_1),\ldots,\phi_m(z_m)$ be fields in $\Phi$ with charges $\alpha_1,\ldots,\alpha_m\in Q$.
We show that the coefficients of $\phi_1(z_1)\cdots\phi_m(z_m)|0\rangle$ belong to the image of $\Psi$, hence, by the completeness of the system $\Phi$, the map $\Psi$ is surjective.

Fix locality bounds $N_{ij}$ of $(\phi_i,\phi_j)$ ($i,j=1,\ldots,m$, $i\neq j$).
Define the $m$-field $A(z_1,\ldots,z_m)$ by linearly extending the assignment
\begin{eqnarray*}
A(z_1,\ldots,z_m)b&=& \prod_{i=1}^{m-1} B\left(\alpha_i,\alpha_{i+1}+\cdots+\alpha_m,\beta\right) \\
&&\cdot \left(\prod_{1\leq i< j\leq m} \iota_{z_i,z_j}(z_i-z_j)^{N_{ij}}\right) \phi_1(z_1)\cdots\phi_m(z_m)b
\end{eqnarray*}
($\beta\in Q$, $b\in V^\beta$).
Then, $A$ is translation covariant, and by Lemma \ref{sec:localf}, the pairs $(A,\phi)$ are local for all fields $\phi$ in $\Phi$.
Consider the operator product expansion of $A$.
Then, all coefficients $\psi_{i(1),\ldots,i(m-1)}(z)$ ($i(1),\ldots,i(m-1)\in \mathbb{Z}_+$)
belong to $S$.
 Therefore, all coefficients of $A(z_1,\ldots,z_m)|0\rangle$ belong to the image of the map $\Psi$.

The product $\phi_1(z_1)\cdots \phi_m(z_m)|0\rangle$ belongs to the space $X:=V[[z_1]]z_1^{\mathbb{C}}\cdots [[z_m]]z_m^{\mathbb{C}}$.
The space $X$ is a module over the algebra
$Y:=\mathbb{C}[[z_1]]z_1^{\mathbb{C}}\cdots [[z_m]]z_m^{\mathbb{C}}$.
Since $\iota_{z_i,z_j}(z_i-z_j)^{N_{ij}}$ is invertible in $Y$ for each $i<j$, we have
\[
\phi_1(z_1)\cdots\phi_m(z_m)|0\rangle=\left(\prod_{1\leq i<j\leq m} \iota_{z_i,z_j}(z_i-z_j)^{-N_{ij}}\right)
A(z_1,\ldots,z_m)|0\rangle.
\]
Therefore, the coefficients of $\phi_1(z_1)\cdots\phi_m(z_m)|0\rangle$ belong to the image of $\Psi$.
By the completeness of the system $\Phi$, the map $\Psi$ is surjective.

{\bf (Injectivity)}
Let $\phi(z)$ be an element of the kernel of $\Psi$.
Then, $\phi(z)|0\rangle|_{z=0}=0$.
Since $S$ is spanned by fields, $\phi(z)$ is a sum of the form $\phi(z)=\sum_{\alpha\in Q}\chi_\alpha(z)$ with the fields $\chi_\alpha(z)$ with charge $\alpha\in Q$.
Let $\alpha$ be an element of $Q$, and put $\chi(z)=\chi_\alpha(z)$.
We show $\chi(z)=0$.
Since $\chi_\beta(z)|0\rangle|_{z=0}$ belongs to $V^\beta$ for any $\beta\in Q$, we have
$\chi(z)|0\rangle|_{z=0}=0$.
Since $\chi(z)|0\rangle=e^{z\partial}(\chi(w)|0\rangle|_{w=0})$ by the translation covariance, 
we have $\chi(z)|0\rangle=0$.
Let  $m$ be a positive integer.
Let $\phi_1(z_1),\ldots,\phi_m(z_m)$ be fields in $\Phi$.
Fix locality bounds $N_{ij}$ of $(\phi_i,\phi_j)$ ($i,j=1,\ldots,m$, $i\neq j$).
Define the $m$-field $A(z_1,\ldots,z_m)$ with total charge $\beta$ as above.
Since the pair $(\chi,A)$ is local, we have
\begin{eqnarray*}
&&\left(\prod_{i=1}^m\iota_{z,z_i}(z-z_i)^{M_i}\right)\chi(z)A(z_1,\ldots,z_m)|0\rangle\\
&&\quad =B(\alpha,\beta,0)\left(\prod_{i=1}^m \iota_{z_i,z}(z-z_i)^{M_i}\right)A(z_1,\ldots,z_m)\chi(z)|0\rangle
\end{eqnarray*}
with the scalars $M_i\in\mathbb{C}$.
Since $\chi(z)|0\rangle=0$, we have
\[
\left(\prod_{i=1}^m \iota_{z,z_i}(z-z_i)^{M_i} \right)\chi(z)A(z_1,\ldots,z_m)|0\rangle=0.
\]
The LHS belongs to the space $X:=V[[z]]z^{\mathbb{C}}[[z_1]]z_1^{\mathbb{C}}\cdots [[z_m]]z_m^{\mathbb{C}}$.
The space $X$ is a module over the algebra 
$Y:=\mathbb{C}[[z]]z^{\mathbb{C}}[[z_1]]z_1^{\mathbb{C}}\cdots [[z_m]]z_m^{\mathbb{C}}$.
Since $\iota_{z,z_i}(z-z_i)^{M_i}$ is invertible in the algebra $Y$
 for each $i=1,\ldots,m$,
we see that $\chi(z) v=0$ for all coefficients $v$ of $A(z_1,\ldots,z_m)|0\rangle$.
Since the coefficients of all $A(z_1,\ldots,z_m)|0\rangle$ span the space $V$,
 we have $\chi(z)=0$, as desired.
\end{proof}

Now, we have a generalization of the reconstruction theorem.

\begin{cor}
The system $\Phi$ generates on $V$ the unique structure of the abelian intertwining algebra.
Moreover, the pair $(V,(Q,F,\Omega))$ is a $Q$-charged abelian intertwining algebra.
\end{cor}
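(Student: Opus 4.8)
The plan is to build the state--field correspondence $Y$ directly out of Theorem \ref{sec:statefield} and then verify the abelian intertwining algebra axioms one by one, the only substantial point being mutual locality of all pairs of state--fields. First I would set $Y\colon V\to \mathrm{End}(V)[[z,z^{\mathbb{C}}]]$, $a\mapsto Y(a,z)$, by linear extension from homogeneous vectors: for $a\in V^{\alpha}$ let $Y(a,z)$ be the unique translation--covariant field of charge $\alpha$ that is mutually local with every $\phi(z)\in\Phi$ and satisfies $Y(a,z)|0\rangle|_{z=0}=a$, as produced by Theorem \ref{sec:statefield}. With this definition the vacuum axiom and translation covariance come essentially for free: the constant field $\mathrm{id}_{V}$ is translation covariant, has charge $0$, is local with all of $\Phi$, and fixes $|0\rangle$ at $z=0$, so the uniqueness clause of Theorem \ref{sec:statefield} forces $Y(|0\rangle,z)=\mathrm{id}_{V}$; since $Y(a,z)$ has charge $\alpha$ and $\Delta(\alpha,0)=0$ one gets $Y(a,z)|0\rangle\in V^{\alpha}[[z]]$ with constant term $a$; and $[\partial,Y(a,z)]=\partial_{z}Y(a,z)$ is built in. Applying this last identity to $|0\rangle$ shows $\partial a\in V^{\alpha}$, so $\partial$ preserves the grading, and items (d)--(f) of the field--and--locality axiom then hold by hypothesis and by construction.

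The real work is axiom (g): for $a\in V^{\alpha}$ and $b\in V^{\beta}$ the pair $(Y(a,z),Y(b,w))$ must be local. Here I would fix $a$, let $\mathcal{L}_{a}$ denote the set of translation--covariant fields mutually local with $Y(a,z)$, and prove $S\subseteq\mathcal{L}_{a}$, where $S$ is the space of fields local with all of $\Phi$ used in the proof of Theorem \ref{sec:statefield}; since $Y(b,w)\in S$ by construction, this finishes the axiom. The point is that $\mathcal{L}_{a}$ contains $\Phi$ (because $Y(a,z)$ is local with every field of $\Phi$) and $\mathrm{id}_{V}$, and is stable under forming coefficients of operator product expansions: iterating Lemma \ref{sec:localf} on fields $\chi_{1},\dots,\chi_{m}\in\mathcal{L}_{a}$ shows the normalized $m$-field $A=F_{\chi_{1},\dots,\chi_{m}}$ (with the $B$-cocycle prefactors, exactly as in the surjectivity part of Theorem \ref{sec:statefield}) is still translation covariant and local with $Y(a,z)$, whence all its OPE coefficients $\psi_{i(1),\dots,i(m-1)}(z)$ lie in $\mathcal{L}_{a}$. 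Running the $\chi_{i}$ over $\Phi$, the computation in Theorem \ref{sec:statefield} shows these coefficients simultaneously lie in $S$ and, under $\chi\mapsto\chi(z)|0\rangle|_{z=0}$, span all of $V$; hence $\Psi$ maps $\mathcal{L}_{a}\cap S$ onto $V$, and since $\mathcal{L}_{a}\cap S\subseteq S$ with $\Psi|_{S}$ injective we conclude $\mathcal{L}_{a}\cap S=S$, i.e.\ $S\subseteq\mathcal{L}_{a}$. This gives the abelian intertwining algebra structure, and with the given data $(Q,F,\Omega)$ and the grading $V=\bigoplus_{\alpha\in Q}V^{\alpha}$ it is the asserted $Q$-charged one.

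For uniqueness I would argue that in any abelian intertwining algebra structure $(V,\widetilde Y,|0\rangle,\partial)$ in which every $\phi\in\Phi$ equals $\widetilde Y(\phi(z)|0\rangle|_{z=0},z)$, each $\widetilde Y(c,z)$ with $c\in V^{\gamma}$ is a translation--covariant charge-$\gamma$ field, local with all of $\Phi$, with constant term $c$ against $|0\rangle$; uniqueness in Theorem \ref{sec:statefield} forces $\widetilde Y(c,z)=Y(c,z)$, and linearity extends this to all of $V$. I expect the mutual-locality step of the previous paragraph --- passing from ``local with $\Phi$'' to ``local with every state--field'' via Lemma \ref{sec:localf} and the fact that $\Psi|_{S}$ is an isomorphism --- to be the only genuine obstacle; everything else is bookkeeping against the definitions and against Theorem \ref{sec:statefield}.
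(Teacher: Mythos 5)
Your proposal is correct and follows essentially the same route as the paper, which presents this corollary as the direct consequence of Theorem \ref{sec:statefield}: you define each state--field via that theorem, deduce the vacuum and translation axioms from its uniqueness clause, obtain mutual locality of state--fields by combining Lemma \ref{sec:localf} with the fact that $\Psi\colon S\rightarrow V$ is an isomorphism, and get uniqueness of the structure again from the uniqueness statement of the theorem. This is exactly the reconstruction argument the paper (following \cite{BK}) intends, so no further comparison is needed.
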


Let $(V,(Q,F,\Omega))$ be a $Q$-charged AIA.
Let $\alpha,\beta$ be elements of $Q$ and $a,b$ elements of $V^\alpha,V^\beta$.

\begin{lem}[Skew-Symmetry]\label{sec:skewsymmetry}
\[
Y(a,z)b=\Omega(\alpha,\beta)e^{z\partial}(Y(b,e^{-\pi i}z)a).
\]
\end{lem}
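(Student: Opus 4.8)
\emph{Proof plan.} The strategy is to run the classical vertex-algebra argument for skew-symmetry—locality evaluated on the vacuum—while carrying the abelian $3$-cocycle through it. First I would apply the field and locality axiom to the pair $\big(Y(a,z),Y(b,w)\big)$ and test the local commutation relation on $c=|0\rangle\in V^0$: there is $N\in\Delta(\alpha,\beta)$, which by repeatedly multiplying by $(z-w)$ I may take so that $N-\Delta(\alpha,\beta)\in\mathbb{Z}_{\ge0}$, with
\[
\iota_{z,w}(z-w)^N\,Y(a,z)Y(b,w)|0\rangle=B(\alpha,\beta,0)\,\iota_{w,z}(z-w)^N\,Y(b,w)Y(a,z)|0\rangle .
\]
The cocycle factor is exactly the scalar in the statement: by (A4), $F(\alpha,\beta,0)=F(\beta,\alpha,0)=1$, so from $B(i,j,k)=F(j,i,k)^{-1}\Omega(i,j)F(i,j,k)$ one gets $B(\alpha,\beta,0)=\Omega(\alpha,\beta)$.

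Next I would use the vacuum axiom together with translation covariance. The creation property $Y(v,z)|0\rangle=e^{z\partial}v$ holds: $Y(v,z)|0\rangle\in V[[z]]$ equals $v$ at $z=0$, and by $[\partial,Y(v,z)]=\partial_zY(v,z)$ and $\partial|0\rangle=0$ it satisfies $\partial_z\big(Y(v,z)|0\rangle\big)=\partial\big(Y(v,z)|0\rangle\big)$. Substituting $Y(a,z)|0\rangle=e^{z\partial}a$ and $Y(b,w)|0\rangle=e^{w\partial}b$ and then pushing the exponential to the outside by translation covariance—this is Lemma \ref{sec:taylor} on the left, and on the right the parallel identity with the roles of $z,w$ interchanged and $\iota_{w,z}$ in place of $\iota_{z,w}$, the latter introducing a factor $e^{-\pi i(M+n)}$ on the part of $Y(b,z)a$ of $z$-degree $M+n$, where $M$ is a representative of $-\Delta(\alpha,\beta)=-\Delta(\beta,\alpha)$—I reduce the displayed identity to one involving $e^{w\partial}\,\iota_{z,w}\big((z-w)^N Y(a,z-w)b\big)$ on one side and $\Omega(\alpha,\beta)\,e^{z\partial}$ times a $\iota_{w,z}$-expanded series built from $Y(b,z)a$ on the other.

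Now the choice of $N$ pays off: since $N-\Delta(\alpha,\beta)\in\mathbb{Z}_{\ge0}$, the series multiplied by $(z-w)^N$ involve only nonnegative integral powers of $z-w$, hence their $\iota_{z,w}$- and $\iota_{w,z}$-expansions are polynomials in $w$ and the specialization $w=0$ is legitimate. Setting $w=0$ kills the outer $e^{w\partial}$ and collapses $\iota_{z,w}(z-w)^{N+M+n}$ and $\iota_{w,z}(z-w)^{N+M+n}$ both to $z^{N+M+n}$, so the identity becomes $z^N Y(a,z)b=\Omega(\alpha,\beta)\,z^N e^{z\partial}\,Y(b,e^{-\pi i}z)a$, the surviving factors $e^{-\pi i(M+n)}$ reassembling into $(e^{-\pi i}z)^{M+n}$. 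Cancelling $z^N$ in $V[[z]]z^{\mathbb{C}}$ gives the claim.

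The main obstacle is not conceptual but bookkeeping: one must be scrupulous about which of $\iota_{z,w}$, $\iota_{w,z}$ is used at each step and about the branch choices built into $\iota_{z,w}(z-w)^N=\iota_{z,w}(z+e^{-\pi i}w)^N$ and $\iota_{w,z}(z-w)^N=\iota_{w,z}(z+e^{\pi i}w)^N$; it is precisely these conventions, together with the symmetry of $\Delta$, that make the two sides agree with exactly the factor $\Omega(\alpha,\beta)$ and the argument $e^{-\pi i}z$. One must also check that multiplying by $\iota_{z,w}(z-w)^N$ or $\iota_{w,z}(z-w)^N$ before the specialization $w=0$ is reversible, which holds because these are invertible in the ring over which the modules in question are defined. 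The remaining ingredients—the evaluation $B(\alpha,\beta,0)=\Omega(\alpha,\beta)$, the creation property, and the translation-covariance shifts—are routine.
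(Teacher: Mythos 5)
Your proposal is correct and follows essentially the same route as the paper's proof: apply the locality axiom with $c=|0\rangle$, use $B(\alpha,\beta,0)=\Omega(\alpha,\beta)$ and the creation property $Y(v,z)|0\rangle=e^{z\partial}v$, shift the exponential out via Lemma \ref{sec:taylor}, and set $w=0$ before cancelling $z^{N}$. The extra bookkeeping you describe (adjusting the locality bound by nonnegative integers and tracking the $e^{\pm\pi i}$ branch conventions) is exactly what the paper's conventions for $\iota_{z,w}$, $\iota_{w,z}$ absorb, so there is no substantive difference.
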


\begin{proof}
By the locality axiom,
\begin{eqnarray*}
&&\iota_{z,w}(z+e^{-\pi i}w)^N Y(a,z)Y(b,w)|0\rangle\\
&&\quad =B(\alpha,\beta,0)\iota_{w,z}(z+e^{\pi i}w)^N Y(b,w)Y(a,z)|0\rangle
\end{eqnarray*}
with $N\in\Delta(\alpha,\beta)$.
Since $B(\alpha,\beta,0)=\Omega(\alpha,\beta)$, $Y(a,z)|0\rangle=e^{z\partial}a$ and $Y(b,w)|0\rangle=e^{w\partial}b$,
\[
\iota_{z,w}(z+e^{-\pi i}w)^N Y(a,z)e^{w\partial}b=\Omega(\alpha,\beta)\iota_{w,z}(z+e^{\pi i}w)^N Y(b,w)e^{z\partial}a.
\]
By  Lemma \ref{sec:taylor}, the RHS is equal to
\[
\Omega(\alpha,\beta)e^{\pi i}\iota_{w,z}(e^{-\pi i}z+w)^N e^{z\partial}\iota_{w,z}(Y(b,w+e^{-\pi i}z)a),
\]
and since $N$ is a locality bound, it belongs to $\mathrm{End}(V)[[w]][[z]]$.
Therefore, by letting $w=0$, we have the lemma.
\end{proof}

Let $\alpha,\beta$ be elements of $Q$ and $a,b$ elements of $V^\alpha,V^\beta$.
Fix a locality bound $N$ of $(Y(a,z),Y(b,w))$ and set $F_{a,b}(z,w):=F_{Y(a,z),Y(b,w)}(z,w)$.
Consider the OPE
\[
\iota_{z,w} F_{a,b}(z+w,z)=e^{w\partial_{z_1}} F_{a,b}(z_1,z_2)|_{z_1=z_2=z}
\]
 of $Y(a,z)$ and $Y(b,w)$.

Let $k$ be a non-negative integer.

\begin{lem}
\[
\frac{1}{k!}\partial_{z_1}^k F_{a,b}(z_1,z_2)|_{z_1=z_2=z}=Y(a{(N-1-k)}b,z).
\]
\end{lem}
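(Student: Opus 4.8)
The plan is to recognise both sides of the asserted identity as elements of the space $S$ of translation-covariant $1$-fields that are mutually local with every field of the generating system $\Phi$, and then to use the bijectivity of $\Psi\colon S\to V$, $\chi(z)\mapsto \chi(z)|0\rangle|_{z=0}$, established inside Theorem \ref{sec:statefield}: a field in $S$ is uniquely determined by its value on the vacuum at $z=0$. Write $\psi_k(z)$ for the left-hand side, so that $\iota_{z,w}F_{a,b}(z+w,z)=\sum_{k\ge 0}\psi_k(z)\,w^{k}$ is the operator product expansion of $Y(a,z)$ and $Y(b,w)$.

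First I would check $\psi_k\in S$. The $2$-field $F_{a,b}=F_{Y(a,z_1),Y(b,z_2)}$ is translation covariant, and by Lemma \ref{sec:localf} — applied with the local pairs $(Y(a,z_1),\phi)$ and $(Y(b,z_2),\phi)$, which hold since $Y(a,z),Y(b,w)\in S$ — it is mutually local with every $\phi(z)\in\Phi$; as recalled just before Theorem \ref{sec:statefield}, the coefficients of the OPE of such a field are again translation covariant and mutually local with every $\phi\in\Phi$, so $\psi_k\in S$. On the other side $a{(N-1-k)}b\in V^{\alpha+\beta}$ is well defined, because $N\in\Delta(\alpha,\beta)$ forces $N-1-k$ into the coset $\Delta(\alpha,\beta)-1$ of admissible indices, and $Y(a{(N-1-k)}b,z)\in S$ with $Y(a{(N-1-k)}b,z)|0\rangle|_{z=0}=a{(N-1-k)}b$ by the construction and the vacuum axiom. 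Hence it remains to prove $\psi_k(z)|0\rangle|_{z=0}=a{(N-1-k)}b$.

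For this I would compute $\psi_k(z)|0\rangle$ explicitly. Using the vacuum axiom $Y(b,z_2)|0\rangle=e^{z_2\partial}b$ and $F(\alpha,\beta,0)=1$ (axiom (A4)) gives $F_{a,b}(z_1,z_2)|0\rangle=\iota_{z_1,z_2}(z_1-z_2)^N\,Y(a,z_1)e^{z_2\partial}b$. The transposed Taylor theorem (Lemma \ref{sec:taylor}), read with $z\mapsto z_1$ and $w\mapsto z_2$, says $Y(a,z_1)e^{z_2\partial}b=e^{z_2\partial}\,\iota_{z_1,z_2}\!\big(Y(a,z_1-z_2)b\big)$; multiplying by the scalar series $\iota_{z_1,z_2}(z_1-z_2)^N$ and using multiplicativity of $\iota_{z_1,z_2}$,
\[
F_{a,b}(z_1,z_2)|0\rangle=e^{z_2\partial}\,\iota_{z_1,z_2}\!\big((z_1-z_2)^N\,Y(a,z_1-z_2)b\big).
\]
Since $N\in\Delta(\alpha,\beta)$, the series $(z_1-z_2)^N Y(a,z_1-z_2)b=\sum_{j}a{(N-1-j)}b\,(z_1-z_2)^{j}$ has integer powers of $z_1-z_2$, and the definition of the OPE is exactly the substitution $z_1\mapsto z+w$, $z_2\mapsto z$ followed by re-expansion; this turns $z_1-z_2$ into $w$ and $e^{z_2\partial}$ into $e^{z\partial}$, so $\iota_{z,w}F_{a,b}(z+w,z)|0\rangle=\sum_{j\ge 0}e^{z\partial}\big(a{(N-1-j)}b\big)\,w^{j}$. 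Reading off the coefficient of $w^{k}$ gives $\psi_k(z)|0\rangle=e^{z\partial}\big(a{(N-1-k)}b\big)$, and evaluating at $z=0$ gives $a{(N-1-k)}b$, as needed.

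The step that needs genuine care — though it is routine rather than deep — is the bookkeeping of fractional $z$-powers and of the iterated expansion maps $\iota_{z_1,z_2}$ and $\iota_{z,w}$: one must verify that combining $\iota_{z_1,z_2}(z_1-z_2)^N$ with $\iota_{z_1,z_2}(Y(a,z_1-z_2)b)$ really yields the honest series $\iota_{z_1,z_2}((z_1-z_2)^NY(a,z_1-z_2)b)$ in integral powers of $z_1-z_2$, and that the substitution $z_1=z+w,\ z_2=z$ defining the OPE commutes with these expansions so that the coefficient of $w^{k}$ is extracted correctly. It is precisely the choice $N\in\Delta(\alpha,\beta)$ — the locality bound — that makes $(z_1-z_2)^NY(a,z_1-z_2)b$ a Taylor series in $z_1-z_2$, which is why $F_{a,b}$ is a genuine $2$-field and its OPE coefficients are ordinary fields; in the classical $\mathbb{Z}$-graded case this is the familiar passage from the $\lambda$-bracket to the $n$-th products.
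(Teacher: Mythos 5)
Your proof is correct and takes essentially the same approach as the paper: both reduce the identity, via the uniqueness statement of Theorem \ref{sec:statefield}, to checking that the OPE coefficient applied to $|0\rangle$ and evaluated at $z=0$ equals $a{(N-1-k)}b$. The only differences are cosmetic — the paper gets the vacuum value more directly by setting $z_2=0$ in the power series $F_{a,b}(z_1,z_2)|0\rangle$, obtaining $z_1^N Y(a,z_1)b$, and taking the $k$-th $z_1$-derivative, whereas you expand via Lemma \ref{sec:taylor}; and you spell out the membership of the OPE coefficient in $S$, which the paper leaves implicit.
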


\begin{proof}
Denote the LHS by $X$.
By Theorem \ref{sec:statefield}, it suffices to show $X|0\rangle|_{z=0}=a_{(N-1-k)}b$.
Since $F_{a,b}(z_1,z_2)|0\rangle\in V[[z_1,z_2]]$, we have $F_{a,b}(z_1,z_2)|0\rangle|_{z_2=0}=z_1^N Y(a,z_1)b\in V[[z_1]]$.
Since $(1/k!) \partial_{z_1}^k (z_1^NY(a,z_1)b)|_{z_1=0}=a{(N-1-k)}$, we have the lemma.
\end{proof}

We have the following {\it Jacobi identity}.
Let $\alpha,\beta,\gamma$ be elements of $Q$, $a,b,c$ elements of $V^\alpha,V^\beta,V^\gamma$, and $n$ an element of $\Delta(\alpha,\beta)$.

\begin{prop}
\begin{eqnarray*}
&& Y(a,z)Y(b,w)c\,\iota_{z,w}(z-w)^n-B(\alpha,\beta,\gamma) Y(b,w)Y(a,z)c\,\iota_{w,z}(z-w)^n\\
&&\quad\quad=F(\alpha,\beta,\gamma)^{-1}\sum_{j\in\mathbb{Z}}Y(a{(n+j)}b,w)c\, \partial_w^j \delta_{\Delta(\alpha,\beta)}(z,w)/j!.
\end{eqnarray*}
\end{prop}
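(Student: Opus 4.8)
The statement is the ``locality implies Jacobi'' theorem in the abelian intertwining setting, so the plan is to follow the classical derivation (as for vertex algebras in \cite{BK}, and for generalized vertex algebras in \cite{DL}) while carefully tracking the cocycle $F$ and the twisted gradings. First I would fix a locality bound $N\in\Delta(\alpha,\beta)$ of the pair $(Y(a,z),Y(b,w))$; enlarging $N$ by a sufficiently large positive integer (which is again a locality bound) one may assume both $M:=N-n\in\mathbb{Z}_{>0}$ and $a{(m)}b=0$ for all $m\ge N$. The key structural step is that, by the field axiom, $Y(a,z)Y(b,w)c$ has no pole along $z=w$, so the series $G(z,w):=\iota_{z,w}(z-w)^{N}\,Y(a,z)Y(b,w)c$ is well defined, and the locality axiom forces $G(z,w)=B(\alpha,\beta,\gamma)\,\iota_{w,z}(z-w)^{N}\,Y(b,w)Y(a,z)c$. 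Thus $G$ is a single ``correlation function'' whose two regional expansions recover the two orderings: $Y(a,z)Y(b,w)c=\iota_{z,w}(z-w)^{-N}G(z,w)$ and $B(\alpha,\beta,\gamma)\,Y(b,w)Y(a,z)c=\iota_{w,z}(z-w)^{-N}G(z,w)$.

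Substituting these into the left-hand side of the proposition and using $\iota_{z,w}(z-w)^{n}=\iota_{z,w}(z-w)^{-M}\,\iota_{z,w}(z-w)^{N}$ together with the analogous identity for $\iota_{w,z}$ collapses it to $\big(\iota_{z,w}(z-w)^{-M}-\iota_{w,z}(z-w)^{-M}\big)G(z,w)$. At this point I would invoke the classical formal-distribution calculus: the identity expressing $\iota_{z,w}(z-w)^{-M}-\iota_{w,z}(z-w)^{-M}$ as a scalar multiple of $\partial_{w}^{M-1}\delta(z,w)$, followed by repeated use of the substitution rule $g(z)\,\partial_{w}^{k}\delta(z,w)=\sum_{i=0}^{k}\binom{k}{i}g^{(i)}(w)\,\partial_{w}^{k-i}\delta(z,w)$. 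Together these turn the expression into a sum over $j\ge 0$ of $\partial_{w}^{j}\delta(z,w)/j!$ times the diagonal Taylor coefficient $\tfrac{1}{(M-1-j)!}\big[\partial_{z}^{M-1-j}G(z,w)\big]_{z=w}$; since the monomials of $G$ have exponents confined to a single $\mathbb{Z}$-coset, the substitution rule automatically replaces the integer delta function by the twisted delta $\delta_{\Delta(\alpha,\beta)}(z,w)$ of the statement.

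It then remains to identify those diagonal coefficients with the modes $a{(n+j)}b$, and this is exactly what the lemma immediately preceding the proposition provides: from the defining formula for $F_{a,b}$ one has $G(z,w)=F(\alpha,\beta,\gamma)\,F_{a,b}(z,w)c$, and $\tfrac{1}{k!}\big[\partial_{z}^{k}F_{a,b}(z,z')c\big]_{z=z'=w}=Y\!\big(a{(N-1-k)}b,w\big)c$, so with $k=M-1-j$ (whence $N-1-k=n+j$) the $j$-th coefficient is a cocycle multiple of $Y(a{(n+j)}b,w)c$; since $a{(n+j)}b=0$ for $j\ge M$, the range of summation is the $\sum_{j\in\mathbb{Z}}$ of the statement, all but finitely many terms being zero. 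Assembling the pieces reproduces the right-hand side. The hard part will be the reconciliation of the cocycle constants---the factor $F$, the structure constant $B$, the normalization $F(\alpha,\beta,\gamma)^{-1}$ built into $F_{a,b}$, and the branch factors $e^{\pm\pi i(\cdot)}$ produced by the two expansions $\iota_{z,w}$ and $\iota_{w,z}$ of fractional powers---which must combine so that exactly $F(\alpha,\beta,\gamma)^{-1}$ survives in front; I expect this to follow from the cocycle relations (A1)--(A3) and from Lemma \ref{sec:lem2}, just as in the proof of Lemma \ref{sec:localf}. The remaining formal-calculus manipulations are routine.
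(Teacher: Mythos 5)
Your overall strategy is the intended one: the paper gives no proof of this proposition (it is omitted with a reference to \cite{BK}), and the standard argument is exactly your reduction --- enlarge the locality bound to $N=n+M$, observe that the two terms of the left-hand side are $\iota_{z,w}(z-w)^{-M}G$ and $\iota_{w,z}(z-w)^{-M}G$ for the single series $G(z,w)=\iota_{z,w}(z-w)^{N}Y(a,z)Y(b,w)c$, use $\iota_{z,w}(z-w)^{-M}-\iota_{w,z}(z-w)^{-M}=\partial_w^{M-1}\delta(z,w)/(M-1)!$, and identify diagonal Taylor coefficients via the unlabeled lemma on $F_{a,b}$ preceding the proposition. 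One small correction: the regularity of $G$ (exponents bounded below in both variables, which is what legitimizes the diagonal restriction and the termination of the sum) comes from comparing the two sides of the locality relation, not from the field axiom; ``no pole along $z=w$'' is not what the field axiom asserts.

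The genuine gap is precisely the step you defer. If you carry out the identification with the paper's own definitions, no cocycle gymnastics occur at all: $G(z,w)=F(\alpha,\beta,\gamma)\,F_{a,b}(z,w)c$ by the definition of $F_{a,b}$, so the preceding lemma gives the $j$-th coefficient as $F(\alpha,\beta,\gamma)\,Y(a(n+j)b,w)c$ --- the scalar that survives is $F(\alpha,\beta,\gamma)$ to the first power, and there is nothing for (A1)--(A3) or Lemma \ref{sec:lem2} to cancel against, so your expectation that they conspire to leave exactly $F(\alpha,\beta,\gamma)^{-1}$ will not materialize; moreover a $\gamma$-dependent scalar can never be absorbed into a delta labelled only by $\alpha,\beta$. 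Likewise the coset of the delta is not handed to you ``automatically'': pulling the fractional powers $z^{r+i}$ of $G$ through $\partial_w^{M-1}\delta(z,w)$ produces the delta supported on the coset of the $z$-exponents of the left-hand side, which is $-\Delta(\alpha,\gamma)+\mathbb{Z}$ (indeed $Y(a,z)Y(b,w)c$ has $z$-exponents in $-\Delta(\alpha,\beta)-\Delta(\alpha,\gamma)+\mathbb{Z}$ and $\iota_{z,w}(z-w)^{n}$ shifts by $\Delta(\alpha,\beta)$; one sees this concretely in a lattice GVA with $a=e^{\mu}$, $b=e^{\nu}$, $c=e^{\lambda}$, where the exponents lie in $(\mu|\lambda)+\mathbb{Z}$). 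So under any fixed reading of $\delta_{\Gamma}(z,w)=\sum_{m\in\Gamma}z^{m}w^{-m-1}$ (the paper never defines $\delta_{\Delta(\alpha,\beta)}$), what your otherwise correct derivation proves is the identity with $F(\alpha,\beta,\gamma)$ in front and $\delta_{-\Delta(\alpha,\gamma)}$; to finish you must either pin down a different intended normalization of the right-hand side or adjust the constants and the delta label in the statement, rather than hope the abelian-cocycle identities reconcile them.
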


The proof is omitted. See \cite{BK}.

Suppose that $V$ is a vertex algebra, that is, $F\equiv 1$ and $\Omega\equiv 1$.
Let $a,b$ be elements of $V$. Then, we have the commutation relation
\begin{equation}\label{eqn:comm}
[a(n),b(m)]=\sum_{k=0}^\infty \begin{pmatrix}n\\ k\end{pmatrix} (a(k)b)(n+m-k).
\end{equation}

\end{document}